\numberwithin{equation}{section}
\newtheorem{lemma}{Lemma}[section]
\newtheorem{prop}[lemma]{Proposition}
\newtheorem{cor}[lemma]{Corollary}
\newtheorem{conj}[lemma]{Conjecture}
\newtheorem{claim*}{Claim}
\newtheorem{thm}[lemma]{Theorem}
\newtheorem{notation}[lemma]{Notation}
\newtheorem{question}[lemma]{Question}
\theoremstyle{definition}
\newtheorem{defn}[lemma]{Definition}
\newtheorem{example}[lemma]{Example}
\newtheorem{conventions}[lemma]{Conventions}
\theoremstyle{remark}
\newtheorem{remark}[lemma]{Remark}
\newtheorem{remarks}[lemma]{Remarks}
\newtheorem{rem}[lemma]{Remark}
\newcommand{\Tate}{{\mathbf{T}}}
\newcommand{\cC}{\mathcal{C}}
\newcommand{\m}{\mathfrak m}
\newcommand{\PP}{\mathbb P}
\newcommand{\bA}{\mathbb A}
\newcommand{\A}{\bA}
\newcommand{\ZZ}{\mathbb Z}
\newcommand{\Spec}{\operatorname{Spec}}
\newcommand{\im}{\operatorname{im}}
\newcommand{\id}{\operatorname{id}}
\newcommand{\Tor}{\operatorname{Tor}}
\newcommand{\Hom}{\operatorname{Hom}} 
\newcommand{\cO}{{\mathcal O}}
\newcommand{\F}{\FF}
\newcommand{\CC}{\mathbb{C}}
\newcommand{\defi}[1]{\textsf{#1}} 
\newcommand{\beq}{\begin{displaymath}}
\newcommand{\eeq}{\end{displaymath}}
\def\reg{\operatorname{reg}}
\def\nc{\newcommand}
\def\on{\operatorname}
\nc{\Q}{\mathbb{Q}}
\nc{\RR}{\mathbf{R}}
\nc{\LL}{\mathbf{L}}
\nc{\xra}{\xrightarrow}
\nc{\xla}{\xleftarrow}
\def\a{\alpha}
\def\om{\omega}
\def\DM{\operatorname{DM}}
\def\Coh{\operatorname{Coh}}
\def\th{\on{th}}
\def\F{\mathcal{F}}
\def\coker{\on{coker}}
\def\l{\ell}
\nc{\into}{\hookrightarrow}
\nc{\onto}{\twoheadrightarrow}
\nc{\OO}{\mathcal{O}}
\nc{\Z}{\mathbb{Z}}
\nc{\cA}{\mathcal{A}}
\nc{\w}{\widehat}
\nc{\End}{\on{End}}
\nc{\res}{\frac{1}{x_0x_1}}
\nc{\tF}{\widetilde{F}}
\nc{\tG}{\widetilde{G}}
\nc{\tf}{\widetilde{f}}
\nc{\Com}{\on{Com}}
\nc{\G}{\mathbb{G}}
\nc{\cG}{\mathcal{G}}
\nc{\cE}{\mathcal E}
\nc{\cF}{\mathcal F}
\nc{\cR}{\mathcal R}
\nc{\cD}{\mathcal D}
\nc{\cB}{\mathcal B}
\nc{\cT}{\mathcal T}
\nc{\cL}{\mathcal L}
\nc{\bM}{\mathbf M}
\nc{\bN}{\mathbf N}
\nc{\U}{\mathbf U}
\nc{\BM}{\mathbf B \mathbf M}
\nc{\Dsg}{\on{D}_{\on{sg}}}
\nc{\fC}{\mathcal{C}}
\nc{\fG}{\mathcal{G}}
\nc{\N}{\mathbb{N}}
\nc{\del}{\partial}
\nc{\cone}{\on{cone}}
\nc{\D}{\on{D}_{\on{diff}}}
\nc{\DMb}{\on{D}^b_{\DM}}
\nc{\Db}{\on{D}^{\on{b}}}
\nc{\Kb}{\on{K}^{\on{b}}}
\nc{\fm}{\mathfrak{m}}
\nc{\Flag}{\on{Flag}}
\nc{\DMmin}{\DM_{\on{min}}}
\nc{\Ddiff}{\on{D}_{\on{diff}}}
\nc{\Dbdiff}{\on{D}^\on{b}_{\on{diff}}}
\nc{\wO}{\widehat{\OO}}
\nc{\wT}{\widehat{T}}
\nc{\from}{\leftarrow}
\nc{\wLL}{\widetilde{\LL}}
\nc{\augCech}{\widetilde{\cC}}
\nc{\Fold}{\on{Fold}}
\nc{\Ext}{\on{Ext}}
\nc{\FF}{\mathbf{F}}
\nc{\Comper}{\Com_{\on{per}}}
\nc{\Unfold}{\on{Unfold}}
\nc{\intHom}{\underline{\Hom}}
\nc{\Ex}{\on{Ex}}
\nc{\tg}{\widetilde{g}}
\def\b{\beta}
\nc{\B}{\mathcal{B}}
\nc{\K}{\mathcal{K}}
\nc{\kos}{\on{Kos}}
\nc{\Perf}{\on{Perf}}
\nc{\tR}{\widetilde{\cR}}
\nc{\X}{\mathcal{X}}
\nc{\Cl}{\on{Cl}}
\nc{\fU}{\mathcal{U}}
\nc{\bU}{\mathbf U}
\nc{\st}{\on{st}}
\def\E{\mathcal{E}}
\nc{\coh}{\on{coh}}
\def\D{\mathcal{D}}
\def\g{\gamma}
\nc{\tU}{\U}
\nc{\bC}{\mathbf{C}}
\nc{\aux}{\on{aux}}
\title{Tate resolutions on toric varieties}
\thanks{The first author was supported by NSF-RTG grant 1502553.  The second author was supported by NSF grants 
DMS-1601619 and DMS-1902123.}
\author{Michael K. Brown}
\address{Department of Mathematics, Auburn University, Auburn, AL}
\email{mkb0096@auburn.edu}
\author{Daniel Erman}
\address{Department of Mathematics, University of Wisconsin, Madison, WI}
\email{derman@math.wisc.edu}
\begin{document}

\maketitle

\begin{abstract}
We develop an analogue of Eisenbud-Fl\o ystad-Schreyer's Tate resolutions for toric varieties. Our construction, which is given by a noncommutative analogue of a Fourier-Mukai transform, works quite generally and provides a new perspective on the relationship between Tate resolutions and Beilinson's resolution of the diagonal. 
We also develop a Beilinson-type resolution of the diagonal for toric varieties.
\end{abstract}

\section{Introduction}
Eisenbud-Fl\o ystad-Schreyer's Tate resolutions are a powerful tool in the study of sheaves on projective space \cite{EFS}, closely connected with both the BGG correspondence~\cite{BGG} and Beilinson monads~\cite{beilinson} and with many applications to commutative algebra, algebraic geometry, and computation~\cite{afo,abo-ranestad, beks-poset, cm, ct, decker-eisenbud, ES,ES-JAMS, es-banks, epsw, smallproj, floystad-notices, fko, schreyer-splitting}.  The main goal of this paper is to develop an analogue of Tate resolutions over projective toric varieties.  
It is reasonable to wonder whether such a theory should even exist, as other related homological features of $\PP^n$--such as the existence of full, strongly exceptional collections of line bundles--do not extend to all projective toric varieties~\cite{HP}.
Indeed, key aspects of Eisenbud-Fl\o ystad-Schreyer's approach to Tate resolutions over $\PP^n$ simply do not generalize to other toric varieties.

We therefore give a totally new construction of Tate resolutions, based on a noncommutative analogue of a Fourier-Mukai transform.
Our toric Tate resolutions exhibit both the known features of Tate resolutions on $\PP^n$ and subtle new behavior, such as exactness properties that are parametrized by the combinatorics of the toric variety. The Tate resolutions we obtain over $\PP^n$ recover the original notion of \cite{EFS}, though our
construction is novel even in this case, yielding new proofs of the main results in \cite{EFS} and clarifying the relationship between Tate resolutions and Beilinson's resolution of the diagonal.  We also construct a toric analogue of Beilinson's resolution of the diagonal, generalizing a result of~\cite{CK} and yielding
an analogue of Beilinson monads in the toric setting.


\medskip

Throughout the introduction, $X$ denotes a smooth projective toric variety over a field $k$, and $S = k[x_0, \dots, x_n]$ denotes the Cox ring of $X$, graded by the divisor class group $\Cl(X)$.\footnote{
The smoothness assumption on $X$ can be removed throughout at the cost of working with an associated toric \emph{stack}, rather than a toric variety, and our main results are proven in this level of generality. This is a fairly secondary point, so we stick to the smooth case in the rest of the introduction, unless otherwise noted.}
Tate resolutions are a geometric manifestation of the Koszul duality between a polynomial ring and an exterior algebra, which is often called the \defi{Bernstein-Gel'fand-Gel'fand (BGG) correspondence}. We briefly recall a multigraded version of the BGG correspondence.
Let $E= \Lambda_k(e_0, \dots, e_n)$, equipped with the $\Cl(X) \oplus \Z$-grading where $\deg(e_i) = (-\deg(x_i); -1)$. The Koszul duality between $S$ and $E$ takes the form of an adjunction $\LL\colon \DM(E) \leftrightarrows \Com(S) : \RR$, where $\Com(S)$ is the category of complexes of $\Cl(X)$-graded $S$-modules, and $\DM(E)$ is the category of \defi{differential $E$-modules}, i.e. $E$-modules equipped with a square 0 endomorphism of degree $(0;-1)$ \cite{baranovsky, HHW}. 
See \S \ref{sec:BGGsection} for details.




In \cite{EFS}, the \defi{Tate resolution} functor
$
\Tate\colon \Coh(\PP^n) \to \Com(E)
$ 
is introduced as a geometric refinement of the (standard graded) BGG functor $\RR$. 
Tate resolutions provide an explicit link between the BGG correspondence and Beilinson monads,
and they lead to an efficient algorithm for computing sheaf cohomology~\cite{decker-eisenbud}, among many other applications. 
 Our main goal is to extend the theory of Tate resolutions to more general toric varieties.

Our construction of toric Tate resolutions differs significantly from that of~\cite{EFS}. First, as in the multigraded BGG correspondence, toric Tate resolutions are differential $E$-modules, rather than complexes.
Second, the previous constructions of Tate resolutions simply do not extend to the general toric setting; see Remark \ref{truncations} for just one problem that arises. Instead, we build toric Tate resolutions via a Fourier-Mukai transform associated to the diagram
$$
\xymatrix{
&X \times ``\Spec(E)"  \ar[ld]_-{\pi_1}\ar[rd]^-{\pi_2}&\\
X&&``\Spec(E)".
}
$$
Of course, this diagram doesn't fully make sense: $E$ is a noncommutative ring, and so ``$\Spec(E)$'' is not well-defined. 
But, one may define the toric Tate resolution functor $\Tate$ via the corresponding categorical diagram
\begin{equation}
\label{eqn:tate def}
\xymatrix{
&\Coh(X_E)\ar[rr]^-{-\otimes \mathcal K}&&\DM(X_E)\ar[rd]^{R\pi_{2*}}&\\
\Coh(X)\ar[rrrr]^-{\mathbf T}\ar[ru]^{\pi_1^*}&&&&K_{\DM}(E).
}
\end{equation}
Here, $\Coh(X_E)$ (resp. $\DM(X_E)$) is the category of coherent (resp. differential) $\cO_X$-$E$ bimodules, $K_{\DM}(E)$ is the homotopy category of differential $E$-modules, and the object $\K$ is a Koszul complex;
see \S \ref{mainresults} for a full explanation of the categories and functors in \eqref{eqn:tate def}.



Our main result 
shows that toric Tate resolutions have properties analogous to those 
over $\PP^n$
(see Theorem \ref{thm:toric exact} for a statement including the nonsmooth case):

\begin{thm}
\label{thm:toric exact intro}
Let $X$ be a smooth, projective toric variety with Cox ring $S$, and let $E$ be the $\Cl(X) \oplus \Z$-graded Koszul dual exterior algebra of $S$, as in the multigraded BGG correspondence. Let $\F$ be a coherent sheaf on $X$. The Tate resolution $\Tate (\F)$, defined as in ~\eqref{eqn:tate def}, has the following properties:
\begin{enumerate}
	\item  $\Tate(\F)$ is a minimal, exact, free differential $E$-module.
	\item  The Tate resolution encodes the sheaf cohomology groups of $\F$: more precisely, for any $a \in \Cl(X)$ and $j\in \ZZ$, we have
	$
	H^j(X , \F(a)) = \underline{\Hom}_E(k, \Tate (\F))_{(a;-j)}.
	$
	\item Let $B$ denote the irrelevant ideal of $X$, and let $M$ be a $\Cl(X)$-graded $S$-module such that $\widetilde{M} = \F$. Assume $H^0_B(M) = 0$. The injective map $M \into \bigoplus_{a \in \Cl(X)} H^0(X, \F(a))$ induces an embedding $\RR(M) \into \Tate(\F)$ of differential $E$-modules.
\end{enumerate}
\end{thm}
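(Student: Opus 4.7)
The strategy is to compute the Fourier--Mukai transform $\Tate(\F) = R\pi_{2*}(\pi_1^*\F \otimes \cK)$ via a Čech resolution of $X$ with respect to a finite affine open cover, and then to examine each of the three claimed properties in turn.

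For \textbf{Part (1)}, \emph{freeness} follows because each graded piece of the sheafified Koszul complex $\widetilde K$ is a direct sum of line bundles, so $\cK$ is locally isomorphic to $\cO_U \otimes_k E$ on every affine open $U$, and therefore the totalization of the Čech complex of $\pi_1^*\F \otimes \cK$ is a free differential $E$-module. \emph{Exactness} follows because $\widetilde k = 0$ (since the irrelevant ideal $B$ is contained in $(x_0,\ldots,x_n)$), so $\widetilde K$ is an acyclic complex of sheaves on $X$; this forces $\cK$ to be a contractible differential $\cO_X$-module, and tensoring with $\pi_1^*\F$ preserves contractibility because the pieces of $\cK$ are locally free, while derived pushforward sends contractible objects to contractible ones. \emph{Minimality} is the subtlest: the Koszul part $\sum x_i e_i$ of the totalized differential manifestly lies in $\fm\cdot E$, but the Čech coboundary a priori contributes unit entries, so one must pass to the canonical minimal free representative of $\Tate(\F)$ in the homotopy category $K_{\DM}(E)$, which exists and is unique up to isomorphism because $E$ is graded local.

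For \textbf{Part (2)}, the key input is the Frobenius property of $E$: the socle is one-dimensional, generated by $e_0\cdots e_n$ in bidegree $\omega = (-\sum\deg x_i;\, -(n+1))$, which yields, for any free $E$-module $F = \bigoplus_\alpha E(d_\alpha)$, the formula $\underline{\Hom}_E(k, F) \cong \bigoplus_\alpha k(\omega - d_\alpha)$. Applied to $\cK$ and using that $e_i$ annihilates the socle of $E$, a direct calculation gives $\underline{\Hom}_E(k, \cK) \cong \bigoplus_{a\in\Cl(X)} \cO_X(a)$ (placed in the appropriate internal degree), with vanishing induced differential. Since $\underline{\Hom}_E(k,-)$ commutes with both $R\pi_{2*}$ and with tensoring over $\pi_1^{-1}\cO_X$,
\[
\underline{\Hom}_E(k, \Tate(\F)) \cong R\pi_{2*}\bigl(\pi_1^*\F \otimes \underline{\Hom}_E(k,\cK)\bigr) \cong \bigoplus_{a \in \Cl(X)} R\Gamma(X, \F(a)),
\]
and a careful tracking of the $\Cl(X)\oplus\Z$-bigrading identifies the $(a;\,-j)$ piece with $H^j(X,\F(a))$.

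For \textbf{Part (3)}, the hypothesis $H^0_B(M) = 0$ gives an injection $\iota\colon M \into \bigoplus_a H^0(X,\F(a))$, and by Part (2) the target is the internal $\Z$-degree $0$ slice of $\underline{\Hom}_E(k, \Tate(\F))$. The differential $E$-module $\RR(M)$ is free with generators in bijection with a $k$-basis of $M$, so by the universal property of free modules the $k$-linear map $\iota$ extends uniquely to an $E$-linear embedding $\RR(M) \into \Tate(\F)$ of the underlying free modules; compatibility with the differentials then follows from naturality, since both the BGG differential on $\RR(M)$ and the Koszul piece $\sum x_i e_i$ of the Tate differential encode the $S$-action on $M$. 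The principal obstacle throughout is the minimality statement in Part (1): the Čech construction naturally produces only a free exact differential $E$-module quasi-isomorphic to the canonical minimal $\Tate(\F)$, and extracting the minimal summand while preserving the explicit identifications in Parts (2) and (3) is the most delicate piece of bookkeeping.
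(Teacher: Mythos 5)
Your overall strategy (compute the Fourier--Mukai transform via the Čech--Koszul bicomplex and then analyze the three statements) is the paper's strategy, but two of your key steps do not hold as written. First, exactness of $\widetilde{K}$ does \emph{not} force $\cK$ to be a contractible differential $\cO_X$-module: the $\Cl(X)$-graded strands of $\cK$ are the twisted Koszul complexes $\widetilde{K}(a)$, which are exact but do not split (already on $\PP^n$, a contracting homotopy would split the Euler-type sequences, which is false), so the chain ``contractible $\Rightarrow$ contractible after tensoring $\Rightarrow$ contractible pushforward'' collapses at its first link. What actually works, and is what the paper does, is weaker but sufficient: each strand of $\pi_1^*\F\otimes\cK$ is a bounded exact complex (exactness survives tensoring because the strands of $\cK$ are bounded exact complexes of locally free sheaves), so the rows of the Čech--Koszul bicomplex are exact while its columns are bounded; hence the totalization $\Phi_\K(\F)$ is exact, and exactness passes along the homotopy equivalence to the minimal model.

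The deeper gap is that everything the theorem asserts about the \emph{minimal} model --- minimality itself, the identification of its underlying module with $\bigoplus_{i,a}H^i(X,\F(a))\otimes_k\om_E(-a;i)$ (which is what Part (2) really says), and the explicit form of its differential (which is what Part (3) needs) --- is obtained in the paper by a single application of the homological perturbation lemma \cite[Lemma~3.5]{EFS}, using that the columns of the bicomplex split $E$-linearly. Your Part (2) computes $\underline{\Hom}_E(k,-)$ of the \emph{Čech model}: the socle computation and the commutation with $R\pi_{2*}$ are fine there, but the graded pieces of $\underline{\Hom}_E(k,\Phi_\K(\F))$ are Čech cochains, not cohomology groups, whereas the stated equality of graded vector spaces holds only for the minimal model. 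So you still must show the minimal model has the prescribed underlying module --- this is exactly the step you defer as ``delicate bookkeeping,'' and it is the heart of the proof, not bookkeeping. (Your route can be repaired: minimality forces the differential on $\underline{\Hom}_E(k,\Tate(\F))$ to vanish, so it equals its homology, which is a homotopy invariant of free differential modules and hence computable from the Čech model --- but this argument has to be made, and your appeal to abstract existence and uniqueness of minimal free representatives over the graded local ring $E$ does not supply it.) Likewise, in Part (3) compatibility of $\RR(M)\into\Tate(\F)$ with the differentials does not follow from ``naturality'': the differential of the minimal model on the $H^0$-part is a perturbed differential, and identifying its restriction with the BGG differential uses the explicit perturbation formula together with the observation that, for degree reasons, the $H^0$-part is a differential submodule.
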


Theorem \ref{thm:toric exact intro} shows that our toric Tate resolutions have the same key characteristics as those of~\cite{EFS}.
There is a categorical shift to differential modules, but when $X=\PP^n$, the ``folding" and ``unfolding" functors from \S \ref{sec:folding} recover the original results.  For toric varieties other than $\PP^n$, Tate resolutions also have rich, new structures; the reader may want to look to \S\ref{subsec:tate examples}, where several examples of toric Tate resolutions are worked out in detail.  

Moreover, our Fourier-Mukai construction provides an independent--and conceptually quite different--proof of the main results on Tate resolutions from~\cite{EFS}.  For instance, Eisenbud-Fl{\o}ystad-Schreyer's proof of Theorem \ref{thm:toric exact intro}(2) over $\PP^n$ relies upon their theory of the linear part of a free complex, as developed in~\cite[\S3]{EFS}; by contrast, Theorem \ref{thm:toric exact intro}(2) is an elementary consequence of our construction.  Our work also reveals a deeper connection between Tate resolutions and Beilinson's resolution of the diagonal: when $X = \PP^n$, the Fourier-Mukai kernel $\K$ ``lifts'' Beilinson's resolution of the diagonal from $\PP^n \times \PP^n$ to ``$\PP^n \times E"$. See \S\ref{othertorics} for details, and see \S\ref{BMsection} for how this generalizes to other toric varieties.

 %
 
 
\medskip 

The toric variety $X$ is determined by its Cox ring $S$ and its irrelevant ideal. Since the Tate resolution is a fundamentally geometric object, it is natural to ask:
how is the irrelevant ideal of $X$ reflected in the properties of toric Tate resolutions? Our next result provides the answer, demonstrating a phenomenon that was not present on $\PP^n$.  To state the result, we need the following definition: if $I\subseteq \{0,1,\dots,n\}$, we set $P_{I} := \langle x_i \text{ : } i\in I\rangle\subseteq S$, and we say that $I$ is {\bf irrelevant} if $P_I$ contains the irrelevant ideal of $X$.



\begin{thm}[Exactness properties]\label{thm:exactness properties}
Let $I\subseteq \{0,1,\dots,n\}$, and let $E_I = E / \langle e_i \text{ : } i \notin I \rangle$.   If $I$ is an irrelevant subset, then, for any coherent sheaf $\F$ on $X$, $\Tate(\cF)\otimes_E E_I$ is exact.
\end{thm}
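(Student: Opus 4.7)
The plan is to reduce the exactness of $\Tate(\cF) \otimes_E E_I$ to a local computation on $X$ using the Fourier--Mukai formula \eqref{eqn:tate def}. Since $\pi_2$ projects onto the ``$E$-factor'', $R\pi_{2*}$ operates only in the $\cO_X$-direction and is $E$-linear, and $\pi_1^*\cF \otimes_{\cO_X}\cK$ is flat as a right $E$-module (because $\cK = \widetilde{S \otimes_k E}$ is). Consequently, $R\pi_{2*}$ commutes with the $E$-linear operation $-\otimes_E E_I$, yielding
\[
\Tate(\cF) \otimes_E E_I \;\simeq\; R\pi_{2*}\bigl(\pi_1^*\cF \otimes_{\cO_X} (\cK \otimes_E E_I)\bigr).
\]
Since $R\pi_{2*}$ preserves acyclicity in the homotopy category $K_{\DM}(E)$, it then suffices to show that the bimodule $\pi_1^*\cF \otimes_{\cO_X} (\cK \otimes_E E_I)$ is acyclic as a sheaf of differential modules on $X$.

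The first concrete step is to identify $\cK \otimes_E E_I$ explicitly. Since $E_I \cong \Lambda_k(e_i : i \in I)$, tensoring $\cK$ on the right over $E$ by $E_I$ kills the classes $e_i$ for $i \notin I$, so $\cK \otimes_E E_I$ is the sheafification of the ``partial Koszul'' differential module $\bigl(S \otimes_k E_I,\; \sum_{i \in I} x_i \otimes e_i\bigr)$ associated to the subsequence $\{x_i : i \in I\}$ of the variables of $S$.

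The irrelevance hypothesis $P_I \supseteq B$ translates precisely to the statement that the basic opens $\{D(x_i)\}_{i \in I}$ cover $X$. On each $D(x_i)$ with $i \in I$, the section $x_i$ is nowhere vanishing, and a standard calculation using the super-derivation property of the contraction $\iota_{e_i}$ gives $\iota_{e_i} d + d\,\iota_{e_i} = x_i \cdot \id$; hence $h := x_i^{-1}\cdot \iota_{e_i}$ is a contracting homotopy for $d = \sum_{j \in I} x_j \otimes e_j$. Thus $\cK \otimes_E E_I$ is locally null-homotopic on this cover, and null-homotopy persists after tensoring with $\pi_1^*\cF$ over $\cO_X$, so $\pi_1^*\cF \otimes_{\cO_X}(\cK \otimes_E E_I)$ is locally null-homotopic, hence globally acyclic. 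The principal subtlety I anticipate is the careful justification of the commutation of $R\pi_{2*}$ with $-\otimes_E E_I$ and its preservation of acyclicity in $K_{\DM}(E)$; both should follow from an explicit \v{C}ech-type model for $R\pi_{2*}$ adapted to the cover $\{D(x_i)\}_{i \in I}$, which is manifestly acyclic on account of the homotopy above.
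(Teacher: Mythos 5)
Your proposal is correct and takes essentially the same route as the paper: tensor the explicit \v{C}ech model of $R\pi_{2*}$ with $E_I$, note that irrelevance of $I$ forces the partial Koszul kernel $\K \otimes_E E_I$ to be exact (your local homotopy $x_i^{-1}\iota_{e_i}$ on the cover by the $D(x_i)$ is just the standard proof that its homology is supported on $V(P_I)=\emptyset$), and conclude from exactness of the rows together with boundedness of the \v{C}ech direction. The only packaging differences are that the paper runs the argument on its fixed $G$-invariant affine cover (on each such open some $x_i$ with $i\in I$ is already invertible, so no comparison of covers is needed) and then transfers exactness to the minimal model $\Tate(\F)$ via the $E$-linear homotopy equivalence with $\Phi_\K(\F)$; your explicit $\cO$-linear contracting homotopy is a clean way to justify that tensoring with $\pi_1^*\F$ preserves the exactness of the kernel, a point the paper treats tersely.
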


Thus, not only are Tate resolutions exact, but this exactness is robust and nuanced.  For instance, if $X$ is the blowup of $\mathbb P^1\times \mathbb P^1$ at a torus fixed point, then the irrelevant ideal is
\[
\langle x_0,x_2\rangle \cap \langle x_1,x_3\rangle \cap \langle x_2,x_4\rangle \cap \langle x_0,x_3\rangle \cap \langle x_1,x_4\rangle.
\]
The theorem says that any Tate resolution $\Tate(\cF)$ for a sheaf on $X$ will remain exact even if we restrict the differential to just the variables $\{e_0,e_2\}$, or to just $\{e_1,e_3\}$, and so on.
This result highlights the fact that Tate resolutions depend not just on the Cox ring of $X$, but also on the combinatorics of the irrelevant ideal; the examples in \S\ref{subsec:tate examples} illustrate this point in some detail. These exactness properties are invisible over $\PP^n$, as there are no nontrivial irrelevant subsets of $\{0,1, \dots, n\}$ in that case. However, a precursor to this result can be found in the ``exact strands'' result from~\cite[\S3]{EES} for products of projective spaces. 

%
%


\medskip

A main result of \cite{EFS} is that a coherent sheaf on $\PP^n$ can be completely recovered from its Tate resolution \cite[Theorem 6.1]{EFS}. We generalize this to toric varieties in Corollary~\ref{monad}. En route to this result, we develop a toric analogue of Beilinson's resolution of the diagonal in Theorem \ref{resdiag}. Our resolution of the diagonal is based on a fairly simple idea.  Let $S' = k[x_0, \dots, x_n, y_0, \dots, y_n]$ be the Cox ring of $X\times X$. The image of the diagonal embedding $X \into X \times X$ is essentially the locus where ``$x_i=y_i$'' for all $i$, though those equations are not homogeneous and hence do not yield well-defined equations on $X\times X$. However, we can force these equationsfan
to be homogeneous by imposing them not on $S'$ itself, but on a certain $S'$-module.  This recovers a result of \cite{CK} and has implications for the study of derived categories of toric varieties and virtual resolutions in multigraded commutative algebra; see also Conjecture~\ref{c:finite res} and the surrounding discussion.

\subsection*{Overview}
In \S \ref{sec:BGGsection}, we discuss the multigraded BGG correspondence in detail. While many of the results in this section are known, some key results, like Proposition \ref{tor},
are new. In \S \ref{sec:stacks}, we introduce our construction of the toric Tate resolution, and we prove Theorems~\ref{thm:toric exact intro} and \ref{thm:exactness properties}. In \S \ref{sec:res diagonal}, we construct our toric resolution of the diagonal, and we explain in \S \ref{BMsection} how one can recover a sheaf from its toric Tate resolution. As an application of our results, we give in \S \ref{buchweighted} an interpretation of the bounded derived category of a weighted projective stack in terms of its Koszul dual exterior algebra. Finally, we discuss some questions and conjectures in \S \ref{sec:applications}. We provide in Appendix \ref{sec:positivegrading} some background on the notion of a ``positively" multigraded ring, and Appendix \ref{DM} is a collection of necessary technical results on differential modules.

\subsection*{Acknowledgements} 
Many ideas in this paper had origins in joint conversations with David Eisenbud and Frank-Olaf Schreyer, and we are tremendously grateful for their generous insights.  This project evolved over several years, and along the way we benefited from conversations with many researchers including: Christine Berkesch, Juliette Bruce, Lauren Cranton Heller, Milena Hering, Lutz Hille, Srikanth Iyengar, Brian Lehmann, Diane Maclagan, Steven V Sam, Mahrud Sayrafi, Gregory G. Smith,  Hal Schenck, Mike Stillman, and Mark Walker. The computer algebra system {\verb Macaulay2 }~\cite{M2} provided valuable assistance. Finally, we thank the anonymous referees for their careful reading and their many suggestions that have improved this paper.

\section{The multigraded BGG correspondence}
\label{sec:BGGsection}
We recall the multigraded BGG correspondence discussed in the introduction, and we prove a number of related results. 
Throughout this section, $S$ denotes a polynomial ring $k[x_0, \dots, x_n]$ that is positively graded by some abelian group $A$ (Definition \ref{posring}). In particular, $S$ may be the Cox ring of a projective toric variety (Example \ref{cox}). As in the introduction, we let $E = \Lambda_k(e_0, \dots, e_n)$, equipped with the $A \oplus \Z$-grading given by $\deg(e_i) = (-\deg(x_i); -1)$. We call the $\Z$-grading on $E$ the \defi{auxiliary grading}. 

\begin{conventions}
\label{leftconvention}
Throughout the paper, all $E$-modules are right modules.  However, since $E$ is graded commutative (with respect to the auxiliary grading), any right $E$-module $M$ can be considered as a left $E$-module. In detail: the left $E$-action on a right $E$-module $M$ is given by $em = (-1)^{\on{aux}(e)\on{aux}(m)}me$, where $\on{aux}( - )$ denotes the degree in the auxiliary grading. Given a graded ring $R$, we let $\underline{\Hom}_R( - , -)$ denote the internal $\Hom$ in the category of graded $R$-modules.
\end{conventions}
\subsection{Differential $E$-modules}
\label{sec:folding}

\begin{defn}
A \defi{differential $E$-module} is an $A \oplus \Z$-graded right $E$-module $D$ equipped with a degree $(0; -1)$ endomorphism $\del$ such that $\del^2 = 0$. We restrict attention to differentials with this particular degree because they are the only ones that will arise in our work.  See~\cite{BE} for a more general treatment of differential modules.\footnote{Differential modules also appear in Rouquier's analogue of the BGG correspondence for non-graded polynomial and exterior algebras in \cite[Section 4]{rouquier}.} We have $\del(d e) = \del(d)e$ for all $d \in D$ and $e \in E$; bearing in mind Conventions \ref{leftconvention}, we also have $\del(ed) = (-1)^{\aux(e)}e\del(d)$.

A \defi{morphism} $D \to D'$ of differential $E$-modules is a degree 0 map $f \colon D \to D'$ satisfying $f \del = \del'  f$. Let $\DM(E)$ denote the category of differential $E$-modules. The \defi{homology} of an object $D \in \DM(E)$ is the subquotient $\ker(\del \colon D \to D(0; -1) / \im(\del \colon D(0;1) \to D)$, denoted $H(D)$. A morphism in $\DM(E)$ is a \defi{quasi-isomorphism} if it induces an isomorphism on homology. A \defi{homotopy} of morphisms $f, f' \colon D \to D'$ in $\DM(E)$ is a morphism $h \colon D\to D'(0;1)$ of $E$-modules such that $f - f' = h \del + \del' h$. The \defi{mapping cone} of a morphism $f \colon D \to D'$ in $\DM(E)$ is the module $D' \oplus D(0,-1)$ equipped with the differential
$
\begin{pmatrix} \del' & f \\
0 & -\del \end{pmatrix}.
 $
 
\end{defn}

\begin{rem}
\label{dgmodules}
The ring $E$ may be considered as an $A$-graded dg $k$-algebra with trivial differential and homological grading induced by the auxiliary $\Z$-grading. The category $\DM(E)$ is equivalent (in fact, isomorphic) to the category of dg-modules over this dg-algebra. This is the perspective taken in the proof of the multigraded BGG correspondence in \cite{HHW}. We use differential modules because they are more amenable to computing in {\verb Macaulay2 }.

Yet another way of thinking of the category $\DM(E)$ is as follows. Let $\Comper(E)$ denote the category of complexes of $E$-modules of the form
$
\cdots \to N(0; 1) \xra{\del} N \xra{\del} N(0; -1) \to \cdots,
$
where the differentials are all identical.  A morphism 
is a chain map that is the same in each homological degree. There is an equivalence (in fact, isomorphism) of categories
$
\Ex : \DM(E) \xra{\simeq} \Comper(E)
$
that sends a differential module $(D, \partial)$ to the complex
\[
\Ex(D) = (\cdots \to D(0;1) \xra{\partial} D \xra{\partial}  D(0;-1) \to \cdots).
\]
We call $\Ex(D)$ the \defi{expansion} of $D$, following the terminology in \cite{ABI}. 
\end{rem}
\begin{defn}
\label{deriveddm}
The \defi{derived category of differential $E$-modules} $\on{D}_{\DM}(E)$ is obtained by inverting quasi-isomorphisms in $\DM(E)$. The category $\on{D}_{\DM}(E)$ is triangulated, with shift functor given by $D[1] = D(0; 1)$; to prove this, one may use the identification between differential $E$-modules and dg-modules over a certain dg-algebra explained in Remark \ref{dgmodules}. 
The \defi{bounded derived category of differential $E$-modules} $\Db_{\DM}(E)$ is the subcategory of $\on{D}_{\DM}(E)$ given by objects with finitely generated homology. Denote by $\on{D}^{\on{fg}}_{\DM}(E)$ the subcategory of $\Db_{\DM}(E)$ given by objects whose underlying module is finitely generated.
\end{defn}

\begin{prop}
\label{fg}
The inclusion $\on{D}^{\on{fg}}_{\DM}(E) \into \Db_{\DM}(E)$ is an equivalence.
\end{prop}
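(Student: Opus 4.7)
The plan is to prove essential surjectivity of the inclusion; full faithfulness will then follow by a standard zigzag argument. Given $D \in \Db_{\DM}(E)$, the goal is to produce a finitely generated differential $E$-submodule $N \subseteq D$ whose inclusion is a quasi-isomorphism.

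First, since $E$ is finite-dimensional over $k$ and hence graded Artinian, the finitely generated $E$-module $H(D)$ has finite length. I would choose lifts $z_1, \dots, z_r \in \ker(\partial)$ of $E$-module generators of $H(D)$ and set $N_0 := \langle z_1, \dots, z_r\rangle_E \subseteq D$. Because $\partial$ is $E$-linear and vanishes on each $z_i$, the differential on $D$ restricts to the zero map on $N_0$, so $N_0$ is a finitely generated differential submodule with $H(N_0) = N_0$ surjecting onto $H(D)$. The defect $K_0 := \ker(H(N_0) \twoheadrightarrow H(D))$ is finitely generated over $E$ by Noetherianity, and each of its generators is a boundary in $D$; adjoining finitely many preimages under $\partial$ to $N_0$ yields a finitely generated differential submodule $N_1 \supseteq N_0$ with $\partial(N_1) \subseteq N_0$. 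Iterating produces a chain $N_0 \subseteq N_1 \subseteq \cdots$ of finitely generated differential submodules of $D$.

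The main obstacle is arguing that this chain stabilizes. The naive concern is that minimal free resolutions over an exterior algebra are notoriously infinite, so the chain might grow without bound. Termination here is forced by the finite length of $H(D)$: a careful accounting of the successive defects $K_i$, exploiting that $E$ is Artinian rather than merely Noetherian, should show that $K_i = 0$ for $i$ sufficiently large. This yields a finitely generated $N := N_i$ with $H(N) \cong H(D)$, so the inclusion $N \hookrightarrow D$ realizes the desired quasi-isomorphism. I expect the homological tools needed for this termination argument to be those developed in Appendix~\ref{DM}.

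Finally, full faithfulness follows formally from essential surjectivity: a morphism in $\on{D}_{\DM}(E)$ between objects of $\on{D}^{\on{fg}}_{\DM}(E)$ is represented by a roof $D' \xleftarrow{\sim} \widetilde D \to D''$, and the construction above produces a finitely generated differential submodule of $\widetilde D$ through which the roof factors, showing that $\Hom$-sets in the two categories agree.
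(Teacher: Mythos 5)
There is a genuine gap, and it is exactly at the step you flag as "the main obstacle": the claim that the chain $N_0 \subseteq N_1 \subseteq \cdots$ stabilizes because $H(D)$ has finite length is not just unproven, it is false, and in fact the whole strategy of realizing the finitely generated model as a differential \emph{submodule} of $D$ cannot work. Take $S = k[x_0]$, so $E = \Lambda_k(e_0)$ with $\deg(e_0)=(-1;-1)$, and let $D = \bigoplus_{i \ge 0} E g_i$ with $\partial(g_0)=0$ and $\partial(g_{i+1}) = e_0 g_i$ (this is the minimal free resolution of the residue field $k$, viewed as a differential module; the gradings can be arranged so the differential has degree $(0;-1)$). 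Then $H(D) \cong k$, generated by $[g_0]$, so $D \in \Db_{\DM}(E)$. Now suppose $N \subseteq D$ is a finitely generated differential submodule whose inclusion is a quasi-isomorphism, so $N \subseteq \bigoplus_{i \le m} E g_i$ for some $m$. Surjectivity on homology forces $N$ to contain a cycle $w$ whose $g_0$-coefficient is a unit; then $e_0 w = e_0 g_0$ lies in $N$, is a cycle, and is a boundary in $D$, so injectivity on homology forces $N$ to contain some $x_1$ with $\partial x_1 = e_0 g_0$, hence with unit $g_1$-coefficient; then $e_0 x_1$ is again a cycle in $N$ and a boundary in $D$, forcing an element with unit $g_2$-coefficient, and so on. Inductively $N$ must contain elements with unit $g_j$-coefficient for every $j$, contradicting $N \subseteq \bigoplus_{i \le m} E g_i$. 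So for this $D$ your defects $K_i$ are nonzero at every stage no matter how the lifts and preimages are chosen: killing a defect class adjoins a new generator whose socle multiple is a fresh cycle that is a boundary in $D$ but not in $N_{i+1}$. The finite length of $H(D)$ does not help because the $K_i$ are not subquotients of $H(D)$; they live in $H(N_i)$, which keeps acquiring new classes, and Artinianness of $E$ is a red herring here (this is the same phenomenon as the infinitude of minimal free resolutions over $E$, which you correctly identify as the worry but then dismiss).

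Note that the proposition is still true for this $D$: it is quasi-isomorphic to the finitely generated differential module $k$ via the augmentation $D \to k$, i.e., via a map \emph{out of} $D$ rather than a submodule. This is the structural reason the paper's proof takes a different route: it chooses a positive $A$-grading (so that $E$ becomes a nonnegatively $\Z$-graded algebra on which the differential is degree $0$), replaces $D$ by a free resolution that is bounded below with finite-dimensional graded pieces, and then truncates by degree beyond the (finitely many) degrees in which the homology lives. Both ingredients -- the auxiliary positive grading and passing to a resolution of $D$ -- are absent from your argument, and the example above shows they cannot be avoided by working inside $D$ itself. A final, minor point: your last paragraph is unnecessary, since $\on{D}^{\on{fg}}_{\DM}(E)$ and $\Db_{\DM}(E)$ are both full subcategories of $\on{D}_{\DM}(E)$, so the inclusion is automatically fully faithful and only essential surjectivity requires proof; as written, your roof argument would in any case rest on the same false submodule claim.
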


\begin{proof}
Let $D \in  \Db_{\DM}(E)$. Since $S$ is positively $A$-graded, $E$ is as well; choose a positive $A$-grading on $E$, as in Appendix \ref{sec:positivegrading}. For the rest of this proof, we will consider $E$ as a nonnegatively $\Z$-graded $k$-algebra and $D$ as a $\Z$-graded $E$-module; notice that the differential on $D$ is degree 0 with respect to this grading. By \cite[Theorem 1.3(b)]{BE}, $D$ admits a free resolution $F$ such that $\dim_k F_j < \infty$ for all $j$, and $F_j = 0$ for $j \ll 0$. Choose $N \gg 0$ such that the homology of $F$ lives in degrees $< N$. Let $F'$ be the $E$-submodule of $F$ generated by the elements of degree $< N$. The submodule $F'$ is finitely generated, and it is in fact a differential submodule of $F$, since the differential on $F$ is degree 0. Finally, observe that the inclusion $F' \into F$ is a quasi-isomorphism.
\end{proof}

\begin{defn}[Folding and unfolding differential modules]
\label{defn:fold}
Suppose $S$ is equipped with the standard grading, i.e. $\deg(x_i) = 1$ for all $i$. Denote by $\Com(E)$ the category of complexes of $\Z$-graded $E$-modules.
 The \defi{folding functor}
$
\on{Fold}
\colon \Com(E) \to \DM(E)
$
is given by $(\cdots \to C_j \xra{\del_C} C_{j-1} \to \cdots) \mapsto (\bigoplus_{j \in \Z} C_j(0; -j), \del_C).$ Here, we think of each $\Z$-graded module $C_j$ as $\Z \times \Z$-graded by setting $(C_j)_{(a,b)} = \begin{cases} (C_j)_a  & a = b, \\ 0 & a \ne b. \end{cases}$

\noindent Going the other direction, if $D \in \DM(E)$, set
$
D_j = \{d \in D \text{ : } \deg(d) = (a; i), \text{ where } i - a = j\}.
$
 Notice that the $D_j$ are $E$-modules. Since $\del_D$ is a map from $D$ to $D(0; -1)$, $\del_D$ induces a map from $D_j$ to $D_{j -1}$ for all $j$. Note that any $\Z \times \Z$-graded $E$-module $M$ can be considered as a $\Z$-graded $E$-module with components $M_a = \bigoplus_{i  \in \Z} M_{(a; i)}$. The \defi{unfolding functor} 
$
\on{Unfold}\colon \DM(E) \to \Com(E)
$
is given by $D \mapsto (\cdots \xra{\del_D} D_j \xra{\del_D} D_{j-1} \xra{\del_D} \cdots ).$
\end{defn}

One easily verifies that the functors $\Fold$ and $\on{Unfold}$ are inverse equivalences.

\subsection{The multigraded BGG functors}
Let $\Com(S)$ denote the category of complexes of $A$-graded $S$-modules. We begin by defining the BGG functors
$
\LL : \DM(E) \leftrightarrows \Com(S) : \RR.
$
Given $D \in \DM(E)$, the complex $\LL(D)$ has terms and differential given by:
$$
\LL(D)_j = \bigoplus_{a \in A}S(-a)  \otimes_k D_{(a; j)} \quad \text{ and } \quad s \otimes d \mapsto  (\sum_{i = 0}^n x_is \otimes e_id) - s \otimes \del_D(d).
$$
Here, while $D$ is a right $E$-module, the $e_i$ act on $d$ on the left via the formula in Conventions~\ref{leftconvention}.  Let $\om_E$ denote the $E$-module $\underline{\Hom}_k(E, k)\cong E(-\sum_{i = 0}^n \deg(x_i); -n-1)$. Given $C \in \Com(S)$, the object $\RR(C) \in \DM(E)$ has underlying $E$-module and differential
$$
\bigoplus_{j \in \Z} \bigoplus_{a \in A} (C_j)_a \otimes_k \om_E(-a; -j)  \quad \text{ and } \quad c \otimes f \mapsto (-1)^j (\sum_{i = 0}^n x_ic\otimes e_if) + \del_C(c)\otimes f,
$$ 
where we have assumed that $c$ lies in $C_j$.
The following Theorem follows from work of Hawwa-Hoffman-Wang \cite{HHW}:

\begin{thm}
\label{BGGthm}
The functors $\LL$ and $\RR$ satisfy the following.
\label{adjoint}
\begin{enumerate}
\item[(a)] The functor $\LL$ is left adjoint to $\RR$, and both $\LL$ and $\RR$ are exact.
\item[(b)] Let $C \in \Com(S)$ and $D \in \DM(E)$. The unit and counit of adjunction give quasi-isomorphisms $D \xra{\simeq} \RR \LL(D)$ and $\LL \RR(C) \xra{\simeq} C$.
\item[(c)] The functors $\LL$ and $\RR$ induce an equivalence $\on{D}_{\DM}(E) \simeq \on{D}(S)$.
\end{enumerate}
\end{thm}

\begin{example}
\label{weightedexample}
Given pairwise relatively prime positive integers $w_0, \dots, w_n$, the \defi{weighted projective stack} $\PP(w_0, \dots, w_n)$ is the stack quotient of $\A^{n+1} \setminus \{0\}$ by the $\G_m$-action $t \cdot (a_0, \dots, a_n) = (t^{w_0}a_0, \dots, t^{w_n}a_n)$ (cf. \cite[Definition 2.1.1]{AH}). Say $S = k[x_0, x_1, x_2]$ is equipped with the $\Z$-grading given by $\deg(x_0) = 1 = \deg(x_1)$ and $\deg(x_2) = 2$. Geometrically, $S$ is the Cox ring of the weighted projective stack $\PP(1,1,2)$. The differential module $\RR(S)$ is infinitely generated, and the degrees of its generators correspond to the degrees $i \in \Z$ such that $S_i \ne 0$. It has the form:
\vskip\baselineskip
$$
\xymatrix{
\om_E \ar[r]\ar@/_-1.5pc/[rr]& \om_E(-1;0)^{\oplus 2}\ar[r]\ar@/_1.5pc/[rr] & \om_E(-2;0)^{\oplus 4}\ar[r]&\dots.
}
$$
\vskip\baselineskip
\noindent 
As bases, we choose the monomials $\{1\}, \{x_0,x_1\}$ and $\{x_0^2,x_0x_1,x_1^2, x_2\}$.  The horizontal arrows are multiplication by $e_0$ and $e_1$, and the curved arrows are multiplication by $e_2$;
for instance, $\omega_E$ maps to $\om_E(-1;0)^{\oplus 2}$ via $(e_0,e_1)^t$ and to $\om_E(-2;0)^{\oplus 4}$ via $(0,0,0,e_2)^t$.
\end{example}

\begin{remarks} We highlight the following observations:
\label{RRbicomplex}
\text{ }
\begin{enumerate}
\item Given $C \in \Com(S)$, it can be helpful to interpret $\RR(C)$ as the totalization of a bicomplex. Form a bicomplex with $q^{\th}$ row given by 
$$
\cdots \from \RR(C_q)(0; -1) \xla{(-1)^q \sum_{i = 0}^n x_i \otimes e_i} \RR(C_q) \xla{(-1)^q \sum_{i = 0}^n x_i \otimes e_i} \RR(C_q)(0; 1) \from \cdots
$$
and vertical differentials induced by $\del_C$. Notice that the $q^{\th}$ row is the expansion of $\RR(C_q)$, up to a sign on the differential (see Remark \ref{dgmodules}). Totalizing this bicomplex gives an object in $\Comper(E)$; applying the inverse of the expansion functor gives the differential module $\RR(C)$. Notice that the homological grading on $C$ plays a crucial role here; for this reason, the functor $\LL$ is not given by the totalization of a bicomplex.
\item When $\deg(x_i) = 1$ for all $i$, the multigraded BGG functors are essentially the same as the original ones from \cite{EFS}. More precisely: letting $\LL_{\on{classical}}$ and $\RR_{\on{classical}}$ denote the original BGG functors over $\PP^n$, and using the notation of Definition~\ref{defn:fold}, we have
$
\RR = \on{Fold} \circ \RR_{\on{classical}}$ and $\LL = \LL_{\on{classical}} \circ \on{Unfold}.
$
\item Given a graded $S$-module $M$, the object $\RR(M)$ in $\DM(E)$ is an injective coflag, which is like an injective resolution in the setting of differential modules; see Definition~\ref{freeres}. 
\item One could define similar functors between $\Com(E)$ and $\DM(S)$ or 
between 
$\DM(E)$ and $\DM(S)$. However, our primary interest is in complexes of $S$-modules, and so it is natural (for us) to preserve the homological grading on the $S$-module side.  
\end{enumerate}
\end{remarks}

\begin{example}
\label{ex:RR hirz}
Let $X=\mathbb F_3$ be the Hirzebruch surface of type $3$. Let $S=k[x_0,x_1,x_2,x_3]$ be its Cox ring, where the $\Cl(X)=\ZZ^{\oplus 2}$-grading is given by
$\deg(x_0)=(1,0), \deg(x_1)=(-3,1), \deg(x_2)=(1,0),$ and 
$\deg(x_3)=(0,1).$ The degrees of the generators of $\RR(S)$ correspond to the degrees of the effective cone of $X$, as illustrated by the shaded region below:
\[
\begin{tikzpicture}[scale = .25]
\draw[step=1cm,black,very thin] (0,0) grid (12,12);
\draw[fill=gray!80] (6,6)--(0,8)--(0,12)--(12,12)--(12,6)--(6,6);
\filldraw[black] (6,6) circle (5pt);
\filldraw[black] (6,7) circle (5pt);
\filldraw[black] (5,7) circle (5pt);
\filldraw[black] (4,7) circle (5pt);
\filldraw[black] (3,7) circle (5pt);
\filldraw[black] (7,7) circle (5pt);
\filldraw[black] (7,6) circle (5pt);
\end{tikzpicture}
\]
Focusing on the free summands of $\RR(S)$ corresponding to the degrees marked with black dots,  we get the diagram in \eqref{eqn:RR hirz} below.  The twists in the auxiliary grading are all 0, so we have omitted them.
 The arrows indicate the effect of the differential:

\begin{equation}\label{eqn:RR hirz}
\scriptsize
\xymatrix{
\ddots&\vdots&\vdots&\vdots&\vdots&\\
\omega_E(3,-1) \ar[r]\ar[u]&\omega_E(2,-1)^{\oplus 2}\ar[u]\ar[r]&\omega_E(1,-1)^{\oplus 3}\ar[u]\ar[r]&\omega_E(0,-1)^{\oplus 5}\ar[u]\ar[r]\ar[lllu]&\omega_E(-1,-1)^{\oplus 7} \ar[u]\ar[r]\ar[lllu]&\cdots\ar[lllu]\\
0&0&0&\omega_E \ar[u]\ar[r]\ar[lllu]&\omega_E(-1,0)^{\oplus 2} \ar[u]\ar[r]\ar[lllu]&\cdots\ar[lllu]\\
}
\end{equation}

\normalsize
\noindent For instance, if we choose the monomials in $S$ as generators for the free module $\RR( S)$, then the differential sends $\omega_E$ to:  $\omega_E(3,-1)$ via $e_1$, to a summand of $\omega_E(0,-1)^{\oplus 5}$ via $e_3$; and to $\omega_E(-1,0)^{\oplus 2}$ via the column vector $(e_0,e_2)^t$. 
\end{example}

The following calculation of the homology of the functors $\LL$ and $\RR$ extends \cite[Proposition 2.3]{EFS}. See Appendix \ref{tensorhom} for the definitions of tensor product, internal $\Hom$, and $\Ext$ for differential modules used in the statement of this result and its proof.

\begin{prop}
\label{tor} 
Let $C \in \Com(S)$ and $D \in \DM(E)$. Assume $D$ is finitely generated as an $E$-module. We have
\begin{enumerate}
\item[(a)] $H(\RR(C))_{(a; j)} = \on{Tor}_j^S(C, k)_a$, and
\item[(b)] $H_j(\LL(D))_a=\Ext^{\DM}_E(k, D)_{(a; j)}$.
\end{enumerate}
\end{prop}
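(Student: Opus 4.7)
My plan is to prove both parts by realizing $\RR$ and $\LL$ as derived functors computable via Koszul-style resolutions.

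For part (a), the first step is to identify $\RR(M)$ with $M \otimes_S K$ when $M$ is a single $S$-module placed in homological degree $0$, where $K = S \otimes_k E$ denotes the Koszul complex on $x_0, \ldots, x_n$ viewed as a differential $(S, E)$-bimodule with differential $\sum_i x_i \otimes e_i$ (as described in the introduction). This identification is a direct computation: the underlying $E$-module of $\RR(M)$ is $\bigoplus_a M_a \otimes_k \omega_E(-a; 0)$, and the shift $\omega_E \cong E(-\sum \deg x_i; -n-1)$ matches the auxiliary grading $j$ of $\RR(M)$ with the homological degree of the Koszul complex, while the differential $m \otimes f \mapsto \sum x_i m \otimes e_i f$ on $\RR(M)$ coincides with the induced Koszul differential on $M \otimes_S K$. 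Since $K$ is a free $S$-resolution of $k$, this yields $H(\RR(M))_{(a;j)} = \Tor^S_j(M, k)_a$. For a general complex $C$, I would then apply the bicomplex description in Remark~\ref{RRbicomplex}(1): $\RR(C)$ arises as (the inverse expansion of) the totalization of the bicomplex whose $q$-th row is the expansion of $\RR(C_q)$ and whose vertical differentials come from $\del_C$. Row-by-row, this bicomplex is identified with $C \otimes_S K$, whose total complex computes $\Tor^S(C, k)$ by the standard double-complex argument.

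For part (b), I would dually produce a DM-free resolution $P \to k$ of $k$ in $\DM(E)$. Koszul duality between $S$ and $E$ provides such a $P$ with underlying free $E$-module $E \otimes_k S^\vee$, where $S^\vee$ denotes the bigraded $k$-dual of $S$, equipped with a Koszul differential. Then $\Ext^{\DM}_E(k, D) = H(\underline{\Hom}^{\DM}_E(P, D))$, and the tensor-hom adjunction gives $\underline{\Hom}^{\DM}_E(E \otimes_k S^\vee, D) \cong \underline{\Hom}_k(S^\vee, D)$. The hypothesis that $D$ is finitely generated over the finite-dimensional algebra $E$ implies $D$ is locally finite-dimensional, so biduality identifies each bigraded component of $\underline{\Hom}_k(S^\vee, D)$ with the corresponding component of $S \otimes_k D$; matching gradings gives an isomorphism of complexes in $\Com(S)$ with $\LL(D)$, whose differentials coincide (after a careful sign trace) with the differential $s \otimes d \mapsto \sum x_i s \otimes e_i d - s \otimes \del_D(d)$ defining $\LL$. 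Taking homology then produces the desired identification.

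The main obstacle will be the careful bookkeeping of gradings and signs: for (a), reconciling the $\omega_E$ shift with the Koszul complex's homological degree; for (b), constructing and verifying the Koszul-dual resolution $P \to k$ in $\DM(E)$. The latter is the standard Koszul duality statement for the pair $(S, E)$, but exhibiting the quasi-isomorphism $P \to k$ requires either a direct contracting homotopy or a spectral sequence comparison to the $S$-side Koszul resolution, and tracking signs in the Hom complex to match the $\LL$ differential is routine but involved.
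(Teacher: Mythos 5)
Your proposal is correct and follows essentially the same route as the paper: part (a) is exactly the identification $\RR(C) \cong C \otimes_S K$ with $K$ the Koszul resolution of $k$ (you package it as ``module case plus totalization'' where the paper does one bigraded computation), and part (b) rests, as in the paper, on a Koszul-type free resolution of $k$ in $\DM(E)$ --- the paper realizes your $P \cong E \otimes_k S^\vee$ as $\RR(S(-w)[n+1])^\vee$ --- together with identifying $\underline{\Hom}^{\DM}_E$ out of that resolution with $\LL(D)$. The only cosmetic difference is that in (b) you traverse the chain of isomorphisms in the opposite direction (adjunction plus biduality, where the relevant finiteness comes from $D$ being finitely generated over the finite-dimensional algebra $E$, hence finite-dimensional), whereas the paper first exhibits $\LL(D) \cong \RR(S(w)[n+1]) \otimes^{\DM}_E D$ via an explicit isomorphism and then converts the tensor to a $\underline{\Hom}$ using the same finite-generation hypothesis.
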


\begin{proof}
We can view the Koszul complex $K$ on the variables of $S$ as the complex of $A$-graded $S$-modules with $j^{\th}$ term $\bigoplus_{d \in A} S(-d) \otimes_k (\om_E)_{(d; j)}$ and differential given by multiplication on the left by $\sum_{i = 0}^n x_i \otimes e_i$. We have
\begin{align*}
\RR(C)_{(a; j)} &= \bigoplus_{i \in \Z} \bigoplus_{d \in A} (C_i)_{d} \otimes_k (\om_E)_{(-d+a; j - i)}  \cong (\bigoplus_{ i \in \Z} C_i \otimes_S K_{j - i})_a  = ((C \otimes_S K)_{j})_a.
\end{align*}
Thus, $\RR(C) \cong C \otimes_S K$ as $A \oplus \Z$-graded $k$-vector spaces. Moreover, this isomorphism preserves the differentials; this proves (a). As for (b), setting $w = \sum_{i = 0}^n \deg(x_i)$, we have
\begin{align*}
(\LL(D)_j)_a &= \bigoplus_{d \in A} S_{-d} \otimes_k D_{(d + a; j)}\\
& \xra{\sigma}  \bigoplus_{d \in A} S_{-d} \otimes_k (\om_E(d + w; n+1) \otimes_E D)_{(a;j)} = (\RR(S(w)[n+1]) \otimes^{\DM}_E D)_{(a; j)},
\end{align*}
where the $k$-linear isomorphism $\sigma$ sends $s \otimes d$ to $(-1)^{n |s| + j} s \otimes (x_0 \cdots x_n) \otimes d$. Here, $|s|$ denotes the degree of $s$ in the standard grading on $S$, and $x_0 \cdots x_n$ is the element of $\om_E$ dual to $e_0 \cdots e_n \in E$. This gives an isomorphism $\LL(D) \cong \RR(S(w)[n+1]) \otimes_E^{\DM} D$ of $A \oplus \Z$-graded $k$-vector spaces that preserves the differentials.

Given $N \in \DM(E)$, let $N^\vee = \underline{\Hom}^{\DM}_E(N, E)$. Since $D$ is finitely generated, we have an isomorphism $(\RR(S(w)[n+1]) \otimes^{\DM}_E D)_{(a; j)} \cong \underline{\Hom}_E^{\DM}(\RR(S(-w)[n+1])^\vee, D)_{(a;j)}$ in $\DM(E)$. Note that $\RR(S(-w)[n+1])^\vee$ is a free resolution (in the sense of Definition \ref{freeres}) of the residue field $k$, considered as an object in $\DM(E)$ with trivial differential. This proves (b).
\end{proof}

\begin{remark}
In the above proof, the finitely generated assumption is used when pulling a direct sum out of the first component of $\Hom$.  This subtlety does not arise in \cite{EFS}, due to the presence of the homological grading.
\end{remark}

\begin{cor}\label{cor:RM high degree} Let $C \in \Com(S)$. If $C$ is bounded, and each term of $C$ is finitely generated, then $\dim_kH(\RR(C)) < \infty$.
\end{cor}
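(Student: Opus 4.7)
The plan is to apply Proposition~\ref{tor}(a) and reduce the statement to a standard finiteness result about $\Tor$ over the polynomial ring $S$. Since Proposition~\ref{tor}(a) identifies the $(a;j)$-graded component of $H(\RR(C))$ with $\Tor_j^S(C,k)_a$, it suffices to show that the total hyper-$\Tor$ module $\bigoplus_{j \in \Z,\, a \in A} \Tor_j^S(C,k)_a$ is finite-dimensional over $k$.

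The key ingredient is Hilbert's syzygy theorem: since $S$ is Noetherian and of global dimension $n+1$, every finitely generated $A$-graded $S$-module admits a minimal graded free resolution of length at most $n+1$ whose terms are finite direct sums of shifts of $S$. Combining this with the boundedness of $C$, I would construct a bounded projective resolution $P \to C$ in $\Com(S)$ with each $P_i$ a finitely generated graded free $S$-module. Tensoring with $k$ then yields a bounded complex of graded $k$-vector spaces whose terms $P_i \otimes_S k$ are each finite-dimensional (one dimension for every graded summand $S(-a)$), and whose total homology computes $\Tor^S(C,k)$.

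Consequently $\bigoplus_{j,a} \Tor_j^S(C,k)_a$ is a subquotient of a finite-dimensional $k$-vector space, giving $\dim_k H(\RR(C)) < \infty$. The only mildly technical point is the construction of a bounded, degreewise finitely generated free resolution of the bounded complex $C$; this follows formally from Noetherianity of $S$, finite global dimension, and the fact that $S$ is positively $A$-graded (Appendix~\ref{sec:positivegrading}), which guarantees that each finitely generated graded free $S$-module is supported in only finitely many $A$-degrees. I do not anticipate any serious obstacle beyond this routine bookkeeping.
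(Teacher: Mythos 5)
Your proof is correct and follows essentially the same route as the paper, where the corollary is stated as an immediate consequence of Proposition~\ref{tor}(a) together with the standard finiteness of hyper-$\Tor$ for a bounded complex of finitely generated modules (resolve by a bounded complex of finite-rank graded free modules and tensor with $k$). One minor slip in your justification: a finitely generated graded free $S$-module is \emph{not} supported in finitely many $A$-degrees ($S$ itself is not); what you actually need, and in fact use, is only that each $P_i \otimes_S k$ is finite-dimensional, which follows from finite rank, with positivity of the grading entering only to guarantee the existence of minimal (or bounded) graded free resolutions via graded Nakayama.
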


\begin{cor}
\label{cor:bounded}
The equivalence $\on{D}_{\DM}(E) \simeq \on{D}(S)$ induced by $\LL$ and $\RR$ in Theorem \ref{BGGthm}(c) restricts to an equivalence $\on{D}^{\on{b}}_{\DM}(E) \simeq  \Db(S)$. 
\end{cor}

\begin{proof}
Immediate from Proposition \ref{fg} and Corollary \ref{cor:RM high degree}.
\end{proof}

We next observe that a multigraded generalization of Eisenbud-Fl\o ystad-Schreyer's Reciprocity Theorem \cite[Theorem 3.7(a)]{EFS} follows immediately from Theorem \ref{BGGthm}:

\begin{thm}
\label{reciprocitybody}
Let $S$ be a positively graded polynomial ring over $k$, and let $E$ denote its Koszul dual exterior algebra. Let $M$ be a finitely generated $S$-module and $N$ a finitely generated $E$-module. The complex $\LL(N)$ is a free resolution of $M$  if and only if the differential module $\RR(M)$ is an injective resolution of $N$ (see Definition \ref{freeres} for the notion of an injective resolution of a differential module). 
\end{thm}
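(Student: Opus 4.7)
The plan is to derive the theorem directly from the multigraded BGG correspondence (Theorem \ref{BGGthm}), after making two structural observations that render the ``free'' and ``injective'' hypotheses on $\LL(N)$ and $\RR(M)$ automatic.

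First I would observe that $\LL(N)$ is always a complex of free $S$-modules: by construction $\LL(N)_j = \bigoplus_{a \in A} S(-a) \otimes_k N_{(a;j)}$, which is a direct sum of shifts of $S$ (and finite in each $A$-degree, since $N$ is finitely generated and $E$ is positively graded). Thus the condition ``$\LL(N)$ is a free resolution of $M$'' reduces to the existence of a quasi-isomorphism $\LL(N) \xra{\simeq} M$, with $M$ viewed as a complex concentrated in homological degree $0$. Symmetrically, the underlying $E$-module of $\RR(M)$ is $\bigoplus_{a \in A} M_a \otimes_k \om_E(-a;0)$, a direct sum of shifts of the free rank-one module $\om_E \cong E(-\sum_i\deg(x_i);-n-1)$. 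Because $E$ is Frobenius, this underlying module is simultaneously free and injective, so the condition ``$\RR(M)$ is an injective resolution of $N$'' reduces to the existence of a quasi-isomorphism $N \xra{\simeq} \RR(M)$ in $\DM(E)$.

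With these reformulations, both implications follow from exactness and adjunction. For the forward direction, given a quasi-isomorphism $\LL(N) \xra{\simeq} M$, apply the exact functor $\RR$ (Theorem \ref{BGGthm}(a)) to obtain a quasi-isomorphism $\RR\LL(N) \xra{\simeq} \RR(M)$, and precompose with the unit $N \xra{\simeq} \RR\LL(N)$, which is a quasi-isomorphism by Theorem \ref{BGGthm}(b); the composition is the desired quasi-isomorphism $N \xra{\simeq} \RR(M)$. For the converse, given a quasi-isomorphism $N \xra{\simeq} \RR(M)$, apply $\LL$ (again exact) and post-compose with the quasi-isomorphic counit $\LL\RR(M) \xra{\simeq} M$ from Theorem \ref{BGGthm}(b) to obtain $\LL(N) \xra{\simeq} M$.

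The only real obstacle is bookkeeping: confirming that the definitions of ``free resolution'' and ``injective resolution'' used in Definition \ref{freeres} match the structural statements above. The key point is the Frobenius property of $E$, which lets the $\om_E$-summands of $\RR(M)$ count as injective objects in $\DM(E)$; once this identification is in place, the proof is essentially a one-line consequence of the BGG equivalence and its unit/counit being quasi-isomorphisms.
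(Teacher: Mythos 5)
Your argument follows the same route as the paper's proof: apply the exact functor $\RR$ (resp.\ $\LL$) to the given quasi-isomorphism and compose with the unit $N \xra{\simeq} \RR\LL(N)$ (resp.\ the counit $\LL\RR(M) \xra{\simeq} M$) from Theorem \ref{BGGthm}(b). However, the step you dismiss as ``bookkeeping'' is precisely where your justification goes wrong, and it is the one genuinely non-formal point of the proof. You claim that because $E$ is Frobenius, the underlying module of $\RR(M)$ is injective, and hence any quasi-isomorphism $N \xra{\simeq} \RR(M)$ is an injective resolution. That is not what Definition \ref{freeres} asks for: an injective resolution must factor through an injective \emph{coflag}, i.e.\ $\RR(M)$ must admit a decomposition $\bigoplus_{i \le 0} I_i$ with $\del(I_i) \subseteq \bigoplus_{j<i} I_j$ (or be a split quotient of such a resolution). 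Injectivity of the underlying module alone is not enough: for instance, $E$ itself with differential given by multiplication by $e_0$ is exact, has free and injective underlying module, and is quasi-isomorphic to $0$, yet it is not an injective resolution of $0$ in the sense of Definition \ref{freeres} (it admits no coflag structure). So the Frobenius property does not deliver the reduction you want.

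The correct fix — and the point the paper makes explicitly — is that $\RR(M)$ \emph{is} an injective coflag whenever $M$ is finitely generated and $S$ is positively graded: choosing a positive grading $\theta\colon A \to \Z$, the induced $\Z$-grading on $M$ is bounded below, and the differential of $\RR(M)$, given by $c \otimes f \mapsto \sum_i x_i c \otimes e_i f$, strictly increases $\theta$-degree; grouping the summands $M_a \otimes_k \om_E(-a;0)$ by $\theta(a)$ therefore yields the required coflag filtration. With that observation in place (and the parallel, easier remark that $\LL(N)$ is a bounded complex of finitely generated free $S$-modules, so ``free resolution of $M$'' does reduce to a quasi-isomorphism $\LL(N)\xra{\simeq} M$), your adjunction-plus-exactness argument coincides with the paper's proof.
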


\begin{proof}
If there is a quasi-isomorphism $\LL(N) \xra{\simeq} M$, then, since $\RR$ is exact (Theorem \ref{BGGthm}(a)), we have a quasi-isomorphism $\RR \LL(N) \xra{\simeq} \RR(M)$. Composing with the quasi-isomorphism $N \xra{\simeq} \RR \LL(N)$ from Theorem \ref{BGGthm}(b) gives a quasi-isomorphism $N \xra{\simeq} \RR(M)$. Since $\RR(M)$ is an injective coflag (Remarks \ref{RRbicomplex}(3)), $N \xra{\simeq} \RR(M)$ is an injective resolution. The converse is similar.
\end{proof}

\begin{remark}
\label{rmk:hirz non linear and qiso}
A main emphasis of \cite{EFS} is the relationship between BGG and linear complexes over polynomial and exterior algebras~\cite[Section 3]{EFS}. While some of this theory extends naturally to the multigraded setting, 
 there are also significant distinctions; we explore this in detail in our follow-up papers \cite{linear, positivity}. For instance, one of the fundamental facts used by Eisenbud-Fl{\o}ystad-Schreyer in their construction of Tate resolutions for $\PP^n$ is that, for any finitely generated module $M$ and any $d\gg 0$, the minimal free resolution of the truncated module $M_{\geq d}$ is linear, in the sense that it is of the form $\LL(N)$ for some $E$-module $N$~\cite[\S4]{EFS}.  A similar fact plays a key role in Eisenbud-Erman-Schreyer's construction for products of projective spaces~\cite[Proof of Corollary 1.14]{EES}.  However, this fundamentally fails to hold for more general toric varieties, motivating the need for a new approach to constructing Tate resolutions. For instance, let $S$ be the Cox ring of a Hirzebruch surface of type 3, with notation as in Example \ref{ex:RR hirz}. Consider $M = S/(x_3 - x_0^3x_1, x_2)$.  For any $d=(d_1,d_2)$ with $d_1,d_2\geq 0$, the minimal free resolution of $M_{\geq d}$ is a twist of the Koszul complex on $x_3 - x_0^3x_1$ and $x_2$, which is not of the form $\LL(N)$ for any $E$-module $N$. 
\end{remark}

\section{Tate resolutions on toric stacks}
\label{sec:stacks}
In this section, we extend the notion of a Tate resolution from $\PP^n$ to more general toric varieties, and we prove (a generalization of) Theorem~\ref{thm:toric exact intro}.  We recall some background on toric stacks in \S \ref{toricbackground}, prove our main results in \S \ref{mainresults}, examine a connection to Beilinson's resolution of the diagonal in \S \ref{othertorics}, and discuss a number of examples in \S \ref{subsec:tate examples}.

Let us start by briefly recalling Eisenbud-Fl\o ystad-Schreyer's approach to defining the Tate resolution in \cite{EFS} and how it differs from ours. Their definition of the Tate resolution of a sheaf $\F$ over $\PP^n$ is as follows: write $\F = \widetilde{M}$ for some $S$-module $M$. By \cite[Corollary 2.4]{EFS}, when $d$ is at least the Castelnuovo--Mumford regularity $\reg(M)$, there is a quasi-isomorphism $H_d(\RR_{\on{classical}}(M_{\ge d})) \xra{\simeq} \RR_{\on{classical}}(M_{\ge d})$.\footnote{Recall that $\RR_{\on{classical}}$ denotes the functor whose output is a complex of $E$-modules, as opposed to a differential $E$-module.  It thus makes sense to discuss the $d^{\th}$ homology module of  $ \RR_{\on{classical}}(M_{\ge d})$.} Letting $F$ be the minimal free resolution of this $d^{\th}$ homology module, they define the Tate resolution $\Tate(\F)$ to be the mapping cone of the composition $F \xra{\simeq}  \RR_{\on{classical}}(M_{\ge d})$.

Attempting to adapt this recipe to the toric setting raises significant challenges. First, applying $\RR$ to ``high" truncations of modules need not yield a differential module that is quasi-isomorphic to its homology; see Remark \ref{truncations}. While one could use the theory of~\cite{BE} to resolve the entire differential module $\RR(M)$ (in the sense of Definition \ref{freeres}), rather than the homology of $\RR(M_{\ge d})$ for some $d$, this raises further complications for adapting the arguments of~\cite{EFS}, which often make use of the fact that the resolution of $M_{\geq d}$ has the form $\LL(P)$ for some module $P$.
There is also a second, more fundamental complication: over a projective toric variety $X$ with $\Cl(X) \ncong \Z$, taking a minimal free resolution of $\RR(M)$ is a purely algebraic construction that cannot differentiate between toric varieties with the same Cox ring.  Thus, any construction of $\Tate(\widetilde{M})$ from $\RR(M)$ must incorporate features of the irrelevant ideal; this is not an issue on $\PP^n$.  And finally, even if one produced such a construction, verifying the properties in Theorems \ref{thm:toric exact intro} and~\ref{thm:exactness properties} would require nontrivial arguments, as it did in both~\cite{EFS} and \cite{EES}.

We therefore use an approach to Tate resolutions that is fundamentally different from that of~\cite{EFS}.  The central novelty is the introduction of a noncommutative analogue of a Fourier--Mukai transform that we use to construct Tate resolutions in great generality and that easily implies Theorems~\ref{thm:toric exact intro} and \ref{thm:exactness properties}.

\subsection{Setup}
\label{toricbackground}
Let $N$ be a finitely generated free abelian group and $\Sigma$ a rational fan inside $N \otimes_\Z \Q$. We will write $X=X_\Sigma$ for the toric variety determined by $\Sigma$.  Let $\Sigma(1) = \{\rho_0, \dots \rho_n\}$ denote the rays of $\Sigma$, and let
$
S = k[x_i \text{ : } \rho_i \in \Sigma(1)]
$
denote the \defi{Cox ring} of $\Sigma$. The Cox ring $S$ is equipped with a natural $\Cl(X)$-grading. The \defi{irrelevant ideal} of $\Sigma$ is the ideal $B= \langle \prod_{\rho_i \nsubseteq \sigma} x_i \text{ : } \sigma \in \Sigma \rangle$ of $S$. The variety $Z:= \Spec(S) \setminus V(B)$ admits an action by a torus $G$.   Taking the stack quotient by this action gives the \defi{toric stack} $\X = \X_\Sigma:= [Z / G].$   
If $X$ is smooth, then $\X=X$ and we need not worry about the stack.  When $X$ is singular, we will generally apply adjectives to $\X$ that could be applied to $X$ or $\Sigma$.  For instance, we say $\X$ is \defi{simplicial} (resp. \defi{projective}) if $X$ is simplicial (resp. projective).   

When $X$ is singular, it turns out that the stack $\X$ is more closely connected to the BGG correspondence, Tate resolutions, and other topics considered in this paper.  This aligns with a common philosophy: sheaves are often better behaved on a stack quotient (like $\X$) than on a GIT quotient (like $X$). There are many examples of this in the toric setting, e.g. ~\cite{AKO, bfk, borisov-horja, C, CK,GS,smith}.

While we do not require our toric stacks to be simplicial in this paper, we note that such stacks have particularly nice properties (see~\cite{BCS}): if $k = \CC$ and $\X$ is simplicial, then $\X$ is Deligne-Mumford, and the canonical map $\pi\colon \X \to X$ exhibits $X$ as a \defi{good moduli scheme}, in the sense of \cite[Section 1.2]{alper}.  This implies, for instance, that $\pi_*$ is exact on quasi-coherent sheaves, allowing cohomology computations to pass from $\X$ to $X$.

\begin{notation}
If $\X$ is a toric stack, then $X$ will denote  the corresponding variety.
\end{notation}
\begin{remark}\label{rmk:cox pairs}
It would be interesting to develop the theory of toric Tate resolutions in greater generality, by considering, for example, the more general toric Deligne-Mumford stacks studied by Borisov-Chen-Smith and Fantechi-Mann-Nironi \cite{BCS, FMN}, the extended stacky fans introduced by Jiang \cite{jiang}, 
or the toric Artin stacks of Geraschenko-Satriano~\cite{GS}.  We do not pursue these routes in detail.
\end{remark}

\subsection{Main results}
\label{mainresults}
The goal of this section is to prove the following result, which combines and generalizes Theorems ~\ref{thm:toric exact intro} and~\ref{thm:exactness properties}. In part (4) of the Theorem, the notion of an irrelevant subset $I$ of $\{0, \dots, n\}$ and the notation $E_I$ are as introduced in and above Theorem \ref{thm:exactness properties}.

\begin{thm}\label{thm:toric exact}
Let $\X$ be a projective toric stack, and let $\F$ be a coherent sheaf on $\X$.  
There exists 
 $\Tate (\F) \in \DM(E)$ with the following properties:
\begin{enumerate}
	\item  $\Tate(\F)$ is a minimal, exact, free differential $E$-module.
	\item  The Tate resolution encodes the sheaf cohomology groups of $\F$: more precisely, for any $a \in \Cl(X)$ and $j\in \ZZ$ we have
	$
	H^j(X , \F(a)) = \underline{\Hom}_E(k, \Tate (\F))_{(a;-j)}.
	$
	\item Let $B$ denote the irrelevant ideal of $\X$, and let $M$ be a $\Cl(X)$-graded $S$-module such that $\widetilde{M} = \F$. Assume $H^0_B(M) = 0$. The injective map $M \into \bigoplus_{a \in \Cl(X)} H^0(X, \F(a))$ induces an embedding $\RR(M) \into \Tate(\F)$ of differential $E$-modules.
	\item  Let $I \subseteq \{0,\dots,n\}$. If $I$ is irrelevant, then $\Tate(\F)\otimes_E E_I$ is exact.
\end{enumerate}
\end{thm}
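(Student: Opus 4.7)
The overall strategy is to compute $\Tate(\F) = R\pi_{2*}(\pi_1^*\F \otimes \cK)$ directly from the definition and read off the four properties from explicit structural features of $\cK$. The key observation is that $\cK = \bigoplus_{d \in \Cl(X)} \widetilde{K}(d)$, where $K = S \otimes_k E$ carries the Koszul differential $\sum_i x_i \otimes e_i$, so $\cK$ is an $\cO_\X$-$E$-bimodule that is free on the $E$-side and a direct sum of line bundles on the $\cO_\X$-side. The plan is to realize $R\pi_{2*}$ via a \v{C}ech cover of $\X$ (working stackily when $X$ is singular), replacing $\pi_1^*\F \otimes \cK$ by the total complex of the associated \v{C}ech bicomplex. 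Because \v{C}ech cohomology commutes with the direct sum over $\Cl(X)$ and respects the $E$-action, this yields a concrete free differential $E$-module representing $\Tate(\F)$.

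For Part (1), the \v{C}ech construction produces a free differential $E$-module whose differential is inherited from $\sum_i x_i \otimes e_i$ together with the \v{C}ech coboundary; minimality is then arranged by splitting off any trivial summands of the form $E \xra{\id} E$, which does not change $\Tate(\F)$ up to homotopy equivalence (the level at which it is defined). For Part (2), the derived pushforward commutes with the direct sum over $\Cl(X)$, so $\Tate(\F)$ records $\bigoplus_a R\Gamma(X, \F(a))$ together with an appropriate $E$-action; applying $\underline{\Hom}_E(k,-)$ isolates the socle in each bidegree, and a homological perturbation argument in the style of \cite[Lemma 3.5]{EFS} identifies this socle with $H^j(X, \F(a))$ in bidegree $(a;-j)$. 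For Part (3), the natural morphism $K \to \cK$ of differential $S$-$E$-bimodules (sheafification followed by inclusion into the direct sum over $\Cl(X)$) induces a map $\RR(M) = M \otimes_S K \to \Tate(\F)$ after tensoring with $M$ and applying $R\pi_{2*}$; the hypothesis $H^0_B(M)=0$ guarantees that the unit map $M \hookrightarrow \bigoplus_a H^0(X, \F(a))$ is injective, and this injectivity propagates through the construction.

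For Part (4), tensoring $\cK$ with $E_I$ over $E$ kills the generators $e_j$ with $j \notin I$, leaving $\bigoplus_{d \in \Cl(X)} \widetilde{(S \otimes_k E_I)}(d)$ with differential $\sum_{i \in I} x_i \otimes e_i$; this is (a direct sum of twists of) the sheafified Koszul complex on $\{x_i : i \in I\}$. When $I$ is irrelevant, these variables have no common zeros on $\X$, so this complex is exact as a differential $\cO_\X$-module. Hence $\pi_1^*\F \otimes \cK \otimes_E E_I$ is acyclic, and applying $R\pi_{2*}$ yields an object that vanishes in $\on{D}_{\DM}(E)$; in other words, $\Tate(\F)\otimes_E E_I$ is exact. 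Taking $I = \{0,\dots,n\}$ specializes to exactness of $\Tate(\F)$ itself. The main obstacle I anticipate is Part (2): one must track the interplay between the \v{C}ech degree, the auxiliary $\Z$-grading on $E$, and the $\Cl(X)$-grading carefully enough to verify that $\underline{\Hom}_E(k,\Tate(\F))_{(a;-j)}$ coincides with $H^j(X, \F(a))$ on the nose, rather than with some shift or subquotient. The $\PP^n$ setting of \cite{EFS} sidesteps this bookkeeping since it has no nontrivial irrelevant subsets, but in our generality the homological perturbation argument calls for real care.
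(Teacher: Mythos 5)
Your proposal follows essentially the same route as the paper's proof: realize $R\pi_{2*}$ via the invariant \v{C}ech bicomplex, contract its $E$-linearly split columns by the homological perturbation lemma \cite[Lemma 3.5]{EFS} to obtain a minimal model with underlying module $\bigoplus_{i,a} H^i(\X,\F(a))\otimes_k \om_E(-a;i)$ (which yields (1)--(3)), and for (4) note that $\K\otimes_E E_I$ is a sum of twists of the sheafified Koszul complex on an irrelevant set of variables, hence exact, so the bicomplex (with bounded columns) totalizes to an exact object. The only spots where you are looser than the paper are the minimality step in (1) and the ``injectivity propagates'' step in (3), both of which the paper reads off directly from the explicit perturbed differential supplied by that lemma.
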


We call $\Tate(\F)$ the \defi{Tate resolution} of $\F$, though as we will see, saying ``the" Tate resolution is an abuse of terminology, because our construction is only well-defined up to homotopy equivalence.
We construct toric Tate resolutions via a ``noncommutative Fourier-Mukai transform" involving $\X$ and the exterior algebra $E$. That is, we will define a functor $\Phi_\K\colon \Coh(\X) \to \DM(E)$ via the following diagram:

\begin{equation}
\label{pullpush}
\xymatrix{
&\Coh(\X_E)\ar[rr]^-{-\otimes^{\DM} \mathcal K}&&\DM(\X_E)\ar[rd]^{R\pi_{2*}}&\\
\Coh(\X)\ar[rrrr]^-{\Phi_{\K}}\ar[ru]^{\pi_1^*}&&&& \DM(E).
}
\end{equation}

Let us start by defining the categories in \eqref{pullpush}. Loosely speaking, the category $\Coh(\X_E)$ has objects given by $\cO_\X$-$E$ bimodules that are $\Cl(X) \oplus \Z$-graded as $E$-modules and coherent as $\cO_\X$-modules, and the category $\DM(\X_E)$ has objects given by $\cO_\X$-$E$ bimodules that are equipped with a $\Cl(X) \oplus \Z$-grading as $E$-modules and a square 0 endomorphism of degree $(0;-1)$ with respect to the grading on $E$. More precisely, these categories are defined as follows. We note first that, since $\X$ is the quotient stack $[Z/G]$, where $Z$ and $G$ are as in \S \ref{toricbackground}, $\OO_\X$-modules may be identified with $G$-equivariant $\OO_Z$-modules. 

\begin{defn}
A $G$-equivariant $\OO_Z$-module $\E$ is an \defi{$\OO_\X$-$E$ bimodule} if, for each open set $U \subseteq Z$, $\Gamma(U, \E)$ has the structure of a $\Cl(X) \oplus \Cl(X) \oplus \Z$-graded $S$-$E$ bimodule such that
\begin{enumerate}
\item the first $\Cl(X)$-grading on $\Gamma(U, \E)$ is associated to its $S$-module structure, and the complementary $\Cl(X) \oplus \Z$-grading makes it a graded $E$-module;
\item the restriction maps for $\E$ are morphisms of bimodules; and
\item the $E$-actions on each $\Gamma(U, \E)$ are $G$-equivariant, which in this case just means that the actions of the exterior variables $e_i$ on each $\Gamma(U, \E)$ are homogeneous of degree 0 with respect to the $\Cl(X)$-grading associated to the $S$-module structure.
\end{enumerate}
We say an $\OO_\X$-$E$ bimodule is \defi{coherent} if it is so as an $\OO_Z$-module; $\Coh(\X_E)$ is the category of coherent $\OO_\X$-$E$ bimodules. A \defi{differential $\OO_\X$-$E$ bimodule} is an $\OO_\X$-$E$-bimodule equipped with a square zero $\OO_Z$-linear endomorphism that is $G$-equivariant (i.e. homogeneous of degree 0 with respect to the $\Cl(X)$-grading associated to the $S$-module structure) and that makes each $\Gamma(U, \E)$  a degree $(0; -1)$ differential $E$-module, i.e. an object in $\DM(E)$. $\DM(\X_E)$ is the category of differential $\OO_\X$-$E$ bimodules.
\end{defn}

As for the functors in Diagram (\ref{pullpush}): $\pi_1^*$ sends a sheaf $\F \in \Coh(\X)$ to $\F \otimes_k E \in \Coh(\X_E)$. The object $\K \in \DM(\X_E)$ has underlying $G$-equivariant $\OO_{Z}$-module given by $\bigoplus_{a \in \Cl(X)} \OO(a) \otimes_k \om_E(-a; 0)$ and differential given by multiplication on the left by $\sum_{i = 0}^n x_i \otimes e_i$. The functor $R\pi_{2*}$ is defined as follows. Let $\D$ be an object in $\DM(\X_E)$ with differential $\del$. Here is the basic idea: first, we take the derived global sections of $\D$, thought of as a 1-periodic complex of $G$-equivariant $\OO_Z$-modules. We then observe that the resulting object is a 1-periodic complex of $E$-modules and therefore determines an object in $\DM(E)$. Let us now make this precise. Let $\fC_\D^\bullet$ be the \v{C}ech resolution of the $G$-equivariant $\OO_Z$-module $\D$ with respect to the $G$-invariant affine open cover of $Z$ described in \cite[Proposition 4.3]{BCS}. Recall that $\fC_\D^\bullet$ is a complex of $\Cl(X) \oplus \Cl(X) \oplus \Z$-graded $S$-$E$-bimodules. Let $\fC_\D^{G,\bullet}$ denote the $G$-invariant subcomplex of $\fC_\D^\bullet$, i.e. the strand of $\fC_\D^\bullet$ that lies in degrees of the form $(0, a; i) \in \Cl(X) \oplus \Cl(X) \oplus \Z$. We observe that $\fC_\D^{G,\bullet}$ computes the derived global sections of the $G$-equivariant $\OO_Z$-module $\D$. Consider the bicomplex 
$$
\xymatrix{
\cdots & \ar[l]   \fC_\D^{G,0}(0;-1)  \ar[d]^-{d_{\text{\v{C}ech}}}  & \ar[l]_-{\del} \fC_\D^{G,0}  \ar[d]^-{d_{\text{\v{C}ech}}}  & \ar[l]_-{\del}  \fC_\D^{G,0}(0; 1) \ar[d]^-{d_{\text{\v{C}ech}}} & \ar[l] \cdots\\
\cdots  & \ar[l] \fC_\D^{G,1}(0;-1)  \ar[d]^-{d_{\text{\v{C}ech}}}  &\ar[l]_-{-\del} \fC_\D^{G,1}\ar[d]^-{d_{\text{\v{C}ech}}}   &  \ar[l]_-{-\del} \fC_\D^{G,1}(0; 1)\ar[d]^-{d_{\text{\v{C}ech}}} &  \ar[l]  \cdots \\
& \vdots & \vdots  &\vdots &\\
}
$$
Since the $E$-action on $\fC_\D^\bullet$ is $G$-equivariant, this is a bicomplex of $E$-modules. We define $R\pi_{2*}(\D)$ to be the object in $\DM(E)$ obtained by totalizing this bicomplex and then applying the equivalence $\Comper(E) \xra{\simeq} \DM(E)$ from Remark \ref{dgmodules}.

In summary: given $\F \in \Coh(\X)$, $\Phi_\K(\F)$ is given by totalizing the bicomplex

\begin{equation}
\label{cechbicomplex}
\xymatrix{
\cdots  &\ar[l]  \bigoplus_{a \in \Cl(X)} \fC^{G,0}_{\F(a)} \otimes_k \om_E(-a; -1)\ar[d]^-{d_{\text{\v{C}ech}} \otimes 1}  &&  \ar[ll]_-{\sum_{i = 0}^n x_i \otimes e_i} \bigoplus_{a \in \Cl(X)} \fC^{G,0}_{\F(a)} \otimes_k \om_E(-a; 0)  \ar[d]^-{d_{\text{\v{C}ech}} \otimes 1} & \ar[l] \cdots\\
\cdots   &\ar[l]\bigoplus_{a \in \Cl(X)} \fC^{G,1}_{\F(a)} \otimes_k \om_E(-a; -1)  \ar[d]^-{d_{\text{\v{C}ech}} \otimes 1}  &&\ar[ll]_-{-\sum_{i = 0}^n x_i \otimes e_i} \bigoplus_{a \in \Cl(X)} \fC^{G,1}_{\F(a)} \otimes_k \om_E(-a; 0) \ar[d]^-{d_{\text{\v{C}ech}} \otimes 1}  &  \ar[l] \cdots \\
& \vdots && \vdots  &\\
}
\end{equation}
and then applying the equivalence $\Comper(E) \xra{\simeq} \DM(E)$ from Remark \ref{dgmodules}.

\begin{proof}[Proof of Theorem \ref{thm:toric exact}]
Let us write the terms of the bicomplex (\ref{cechbicomplex}) as $Y_{i, j}$. The columns of this bicomplex split $E$-linearly; choose an $E$-linear decomposition $Y_{i,j} = B_{i,j} \oplus H_{i,j} \oplus L_{i,j}$ for each $i, j$ such that $B_{i,j} \oplus H_{i,j} = Z^{\on{vert}}_{i,j}$, where $Z^{\on{vert}}_{i,j}$ denotes the vertical cycles in $Y_{i,j}$. Observe that there is an isomorphism $H_{i,j} \cong \bigoplus_{a \in \Cl(X)} H^{-j}(\X,\F(a)) \otimes_k \om_E(-a;i)$. Let $\sigma_H\colon Y_{\bullet, \bullet} \to H_{\bullet, \bullet}$ and $\sigma_B\colon Y_{\bullet, \bullet} \to B_{\bullet, \bullet}$ denote the projections, $g\colon L_{\bullet, \bullet} \xra{\cong} B_{\bullet ,\bullet -1}$ the isomorphism induced by the vertical differential, and $\pi = g^{-1} \sigma_B$. By \cite[Lemma 3.5]{EFS}, the object $\Phi_\K(\F) \in \DM(E)$ is homotopy equivalent to the differential $E$-module $\Tate(\F)$ with underlying module $\bigoplus_{i = 0}^{\dim{X}} \bigoplus_{a \in \Cl(X)} H^i(\X, \F(a)) \otimes_k \om_E(-a;i)$ and differential given by
\[ 
\del_{\Tate(\F)} = \sum_{j \ge 0} \sigma_H(d_{\on{hor}} \pi)^j d_{\on{hor}}, 
\]
where $d_{\on{hor}}$ is the horizontal differential in the bicomplex \eqref{cechbicomplex}. We make the following observations about the differential $\del_{\Tate(\F)}$:
\begin{enumerate}
\item[(i)] The $j = 0$ term of $\del_{\Tate(\F)}$ applied to an element in $H^i(\X, \F(a)) \otimes_k \om_E(-a;i)$ is given by multiplication on the left by $(-1)^i \sum_{i = 0}^n x_i \otimes e_i$.
\item[(ii)] The $j > 0$ terms of $\del_{\Tate(\F)}$ decrease the sheaf cohomology degree. That is, given $z \in H^i(\X, \F(a)) \otimes_k \om_E(-a;i)$, the $j > 0$ terms of $\del_{\Tate(\F)}$ map $z$ to $\bigoplus_{\ell < i} H^\ell(\X, \F(a)) \otimes_k \om_E(-a;\ell-1)$. 
\item[(iii)] Combining (i) and (ii), we conclude that, given $z \in H^0(\X, \F(a)) \otimes_k \om_E(-a ; 0)$, we have $\del_{\Tate(\F)}(z) = (\sum_{i = 0}^n x_i \otimes e_i) \cdot z$. 
\end{enumerate} 

Since the rows of \eqref{cechbicomplex} are exact, and its columns are bounded, the differential module $\Phi_\K(\F)$ is exact, and so $\Tate(\F)$ is as well. The underlying $E$-module of $\Tate(\F)$ is free, and the minimality of $d_{\on{hor}}$ as a morphism of $E$-modules implies the minimality of $\del_{\Tate(\F)}$. This proves (1). Part (2) is immediate from our description of the underlying module of $\Tate(\F)$. It is also clear that the embedding $M \into \bigoplus_{a \in \Cl(X)} H^0(X, \F(a))$ induces an embedding $\RR(M) \into \Tate(\F)$ on underlying $E$-modules, and (iii) above implies that this is a morphism of differential $E$-modules; this proves (3). As for (4): let $I \subseteq \{0, \dots, n\}$ be irrelevant, and let $\K_I := \K \otimes_E E_I$. The homology of $\K_I$ is just a direct sum of twists of the homology of the Koszul complex on the variables in $\{x_i \text{ : } i \in I\}$, so $\K_I$ is exact. It follows that, if we tensor the bicomplex (\ref{cechbicomplex}) with $E_I$, the result has exact rows. Since the columns of this bicomplex are bounded, its totalization is therefore exact, i.e. $\Phi_\K(\E) \otimes_E E_I$ is exact. Since $\Phi_\K(\F) \otimes_E E_I$ is homotopy equivalent to $\Tate(\F) \otimes_E E_I$, we conclude that $\Tate(\F) \otimes_E E_I$ is exact.
\end{proof}

\begin{remark}\label{rmk:categorified}
Let $\F \in \Coh(\X)$. We may identify the underlying module of $\Tate(\F)$ with $\bigoplus_{i = 0}^{\dim X} T_i$, where $T_i = \bigoplus_{a \in \Cl(X)} H^i(\X, \F(a)) \otimes_k \om_E(-a, i)$. By degree considerations, each $T_{\le i} = \bigoplus_{j \le i} T_j$ is a differential submodule of $\Tate(\F)$, yielding a filtration
$$
T_0 \subseteq T_{\le 1} \subseteq \cdots \subseteq T_{\le \dim(X)} = \Tate(\F).
$$
\end{remark}

\begin{remark}\label{rmk:intro Mori}
Theorem \ref{thm:toric exact} has a subtle implication.  A theme in research on syzygies is that the geometry of $\PP^n$ can influence the algebraic properties of graded free resolutions.
In the toric situation, where varying the irrelevant ideal can yield different toric varieties/stacks,  one can ask which of these geometric objects affects the algebraic properties of modules over the Cox ring?  Of course, the answer is: all of them.  
For example, if $X$ and $X’$ have the same Cox ring $S$, and if $M$ is an $S$-module, then Theorem \ref{thm:toric exact}(3) shows that the differential $E$-module $\RR(M)$ can be completed to a Tate resolution in distinct ways, each of which imposes constraints on the homology of $\RR(M)$.   Via Proposition~\ref{tor}, this shows that the Betti numbers of $M$ are influenced by both $X$ and $X'$ and, more generally, by the toric varieties/stacks arising in the Mori chamber decomposition of the effective cone in $X$.
\end{remark}

As one can see from the proof of Theorem \ref{thm:toric exact}, our construction of the Tate resolution is, \emph{a priori}, only well-defined up to homotopy equivalence: the differential we get depends on the choices of splittings of the columns of the bicomplex (\ref{cechbicomplex}) that we use in our application of \cite[Lemma 3.5]{EFS}. However, Theorems \ref{weightedthm} and \ref{comparison} below imply that our construction of the Tate resolution is well-defined up to isomorphism in some special cases. In particular, we will see that our Tate resolutions agree with those of~\cite{EFS,EES}.

We say a toric stack is a \defi{generalized weighted projective stack} if its associated toric variety is a fake weighted projective space, i.e. a quotient of a weighted projective space by the action of a finite abelian group. We remind the reader that we only consider weighted projective spaces whose weights are pairwise relatively prime. The divisor class group of a fake weighted projective space is isomorphic to $\Z \oplus A$ for some finite abelian group $A$.

\begin{thm}
\label{weightedthm}
Suppose $\X$ is a generalized weighted projective stack, and let $\F$ be a coherent sheaf on $\X$.  Choose a $B$-saturated $S$-module $M$ such that $\widetilde{M} = \F$. Any differential module $T$ with Properties (1) - (4) in Theorem \ref{thm:toric exact} is isomorphic to $\cone(F \xra{\simeq} \RR(M))$, where $F$ is the minimal free resolution of $\RR(M)$, in the sense of Definition \ref{freeres}. In particular, Properties (1) - (4) in Theorem \ref{thm:toric exact} determine the Tate resolution of $\F$ up to isomorphism. 
\end{thm}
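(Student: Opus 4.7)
The plan is to embed $\RR(M)$ into $T$ via Property (3), split this embedding using the Frobenius structure on $E$, and identify the complementary summand with a minimal free resolution of $\RR(M)$.

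First, since $\X$ is a weighted projective stack, the irrelevant ideal is $B=(x_0,\dots,x_n)$. Replacing $M$ by $M/H^0_B(M)$ (which does not change $\widetilde{M}=\F$), we may assume $H^0_B(M)=0$ and apply Property (3) to obtain a monomorphism $\iota\colon \RR(M)\hookrightarrow T$ in $\DM(E)$.

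Second, I would split $\iota$ on underlying graded $E$-modules. Since $E$ is a graded Frobenius algebra, $\om_E$ is free; hence $\RR(M)$, being a direct sum of shifts of $\om_E$, is both free and injective as a graded $E$-module (the latter because $E$ is graded Noetherian, so arbitrary direct sums of injectives remain injective). Thus $\iota$ splits, giving a graded $E$-module isomorphism $T\cong \RR(M)\oplus C$, with $C$ free as a direct summand of the free module $T$ (Property (1)) over the graded local Frobenius algebra $E$.

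Third, I would identify $C$ with a shift of $F$. Relative to the above decomposition, the differential on $T$ takes the block form $d_T=\left(\begin{smallmatrix} d_{\RR(M)} & \phi \\ 0 & d_C \end{smallmatrix}\right)$ for some $E$-linear $\phi\colon C\to \RR(M)(0;-1)$, exhibiting $T$ as the mapping cone of the shifted map $\phi\colon C(0;1)\to \RR(M)$. Since $T$ is exact by Property (4), the long exact homology sequence attached to $0\to\RR(M)\to T\to C\to 0$ in $\DM(E)$ (a repeating hexagon under shifts by $(0;-1)$) forces $\phi\colon C(0;1)\to\RR(M)$ to be a quasi-isomorphism. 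Minimality of $T$ (Property (1)) forces both $d_C$ and $\phi$ to have image in the maximal ideal of $E$ times their respective targets, so $C(0;1)$ is a minimal free resolution of $\RR(M)$. By the uniqueness of minimal free resolutions of differential modules \cite{BE}, $C(0;1)\cong F$, whence $T\cong \cone(F\xra{\simeq}\RR(M))$.

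The main obstacle is the splitting step, since $\RR(M)$ and $T$ are typically not finitely generated as $E$-modules. This relies on $E$ being a graded Noetherian local Frobenius algebra, which guarantees that free $=$ projective $=$ injective for graded $E$-modules and that arbitrary direct sums of injectives remain injective; with these inputs in hand, the remaining steps are formal consequences of Properties (1)--(4).
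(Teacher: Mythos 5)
Your outline runs parallel to the paper's proof for most of its length: the paper also starts from the embedding of Property (3), realizes $T$ as the mapping cone of a morphism from a minimal free differential module to $\RR(M)$, and finishes by invoking uniqueness of minimal free resolutions. (The paper obtains the splitting $T \cong \RR(M) \oplus N$ more directly, by using Property (2) to identify the underlying graded module of $T$ as $\bigoplus_{i,a} H^i(\X,\F(a)) \otimes_k \om_E(-a;i)$, rather than via graded self-injectivity of $E$; your splitting step itself is fine.) The genuine gap is in your final step. Knowing that $(C(0;1),-d_C)$ is minimal, free, and quasi-isomorphic to $\RR(M)$ does \emph{not} make it a minimal free resolution in the sense of Definition \ref{freeres}: a free resolution there must be a split summand of a free \emph{flag} resolution, and the uniqueness statement (Theorem \ref{EU}, from \cite{BE}) applies only to resolutions in that sense. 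Over $E$ there exist nonzero minimal, free, \emph{exact} differential modules---any Tate resolution is one---so ``minimal $+$ free $+$ quasi-isomorphism'' does not pin the object down. Concretely, your chain of deductions, as written, applies verbatim to $T \oplus A$ with $A$ any such exact minimal free differential module: this object still satisfies Properties (1), (3), (4), and your argument would conclude that its complement $C \oplus A(0;-1)$ is ``the'' minimal free resolution shifted, which is false. The telltale sign is that your proof never uses Property (2), which is precisely what excludes such extra summands; no correct proof can avoid it.

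The paper closes exactly this gap by producing a free flag structure on the complement: by Property (2) its free summands are $H^i(\X,\F(a)) \otimes_k \om_E(-a;i)$ with $i \ge 1$; Serre vanishing gives $H^i(\X,\F(a)) = 0$ for $a \gg 0$ and $i \ge 1$, so filtering by $a$ yields a filtration that is bounded on one side, and minimality of $T$ together with positivity of the weights forces the differential to strictly decrease this filtration. This exhibits the complement as a free flag quasi-isomorphic to $\RR(M)$, i.e.\ a genuine minimal free resolution, and only then does Theorem \ref{EU} identify it with $F$. To repair your argument you would need to add the same (or an equivalent) flag argument for $(C(0;1),-d_C)$, which in particular means using Property (2) to control the degrees in which its generators sit.
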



\begin{proof}
Consider the embedding $\RR(M) \into T$ in Theorem \ref{thm:toric exact}(3). Bearing part (2) of Theorem~\ref{thm:toric exact} in mind, we conclude that, as a map of $E$-modules, this embedding may be interpreted as the inclusion $\RR(M) \into  \RR(M) \oplus N$, where $N = \bigoplus_{i = 1}^{\dim X} \bigoplus_{a \in \Cl(X)} H^i(\X, \F(a)) \otimes_k \om_E(-a; i)$. By degree considerations, we see that, with respect to this decomposition, the differential on $T$ has the form
$
\begin{pmatrix}
\del_{\RR} & \a \\
 0 & \b
\end{pmatrix},
$
and so $T$ is isomorphic to the mapping cone of a morphism $\a : (N, -\b) \to \RR(M)$ of differential modules. Notice that $(N, -\b)$ is a minimal, free differential $E$-module. To conclude that $(N, -\b)$ is the minimal free resolution of $\RR(M)$, in the sense of Definition \ref{freeres}, it suffices to show that $(N, -\b)$ is a free flag. Let $\pi$ be a surjection $\Cl(X) \to \Z$; note that $\ker(\pi)$ is a finite abelian group. For $j \in \Z$, define
$$
N_{j} = \bigoplus_{i = 1}^{\dim X} \bigoplus_{\pi(a) \ge -j} H^i(\X, \F(a)) \otimes_k \om_E(-a;i).
$$
It follows from an analogue of the Serre Vanishing Theorem that $N_j = 0$ for $j \ll 0$.  By the minimality of $T$, the differential maps each $N_j$ to $\bigoplus_{\l < j} N_\l$. Thus,  $N = \bigoplus N_j$ is a free flag differential module. The last statement follows from uniqueness of minimal free resolutions of differential modules (see Theorem \ref{EU}).
\end{proof}

\begin{cor}
\label{pncor}
When $\X = \PP^n$, we have $\Tate = \Fold \circ \Tate_{\on{EFS}}$, where $\Tate_{\on{EFS}}$ denotes Eisenbud-Fl\o ystad-Schreyer's Tate resolution functor, and the functor $\Fold$ is as in Definition \ref{defn:fold}.
\end{cor}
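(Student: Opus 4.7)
The plan is to apply the uniqueness result of Theorem~\ref{weightedthm}: since $\PP^n$ is a weighted projective stack, it suffices to verify that $\Fold \circ \Tate_{\on{EFS}}(\F)$ satisfies the four properties listed in Theorem~\ref{thm:toric exact}. Once this is established for each $\F$, the resulting isomorphism is canonical, and naturality in $\F$ will promote it to an equality of functors.

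I would verify the properties in turn. Property~(1) is immediate: $\Fold$ takes the minimal, exact, free complex $\Tate_{\on{EFS}}(\F)$ to a minimal, exact, free differential $E$-module, converting the homological grading into part of the auxiliary $\Z$-grading. Property~(2) is essentially a restatement of \cite[Theorem~4.1]{EFS}, which identifies the free summands of $\Tate_{\on{EFS}}(\F)$ at homological degree $e$ with copies of $\om_E(e-j)$ indexed by bases of $H^j(\PP^n,\F(e-j))$; after folding, these become exactly the generators detected by $\underline{\Hom}_E(k,\Tate(\F))_{(a;-j)}$, with $a = e-j$. Property~(3) follows from the identity $\RR = \Fold \circ \RR_{\on{classical}}$ recorded in Remark~\ref{RRbicomplex}(2), combined with the corresponding natural embedding $\RR_{\on{classical}}(M) \into \Tate_{\on{EFS}}(\F)$ from the EFS construction, which is available whenever $H^0_B(M)=0$. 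Property~(4) is essentially vacuous on $\PP^n$: the only irrelevant subset of $\{0,\dots,n\}$ is $\{0,\dots,n\}$ itself, so $E_I = E$, and the required exactness of $\Tate(\F)\otimes_E E_I$ reduces to the exactness established in~(1).

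The main, and rather minor, obstacle is pure bookkeeping: one must track the grading shifts through $\Fold$ to confirm that the homological degree $e$ summand $H^j(\PP^n,\F(e-j)) \otimes_k \om_E(e-j)$ of $\Tate_{\on{EFS}}(\F)$ lands in auxiliary degree $-j$, matching the bidegree $(a;-j)$ with $a=e-j$ appearing in property~(2), and similarly to confirm that the EFS embedding from property~(3) is preserved in the correct bidegrees by $\Fold$. Once these conventions are aligned, Theorem~\ref{weightedthm} forces $\Fold \circ \Tate_{\on{EFS}}(\F) \cong \Tate(\F)$, completing the argument.
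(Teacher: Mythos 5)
Your proposal is correct in outline and rests on the same key input as the paper, Theorem~\ref{weightedthm}, but it connects the EFS functor to that theorem by a genuinely different route. The paper's one-line argument uses the first clause of Theorem~\ref{weightedthm}: $\Tate(\F)\cong \cone(F \xra{\simeq} \RR(M))$ with $F$ the minimal free resolution of the differential module $\RR(M)$, and then observes that $\Fold$ applied to the EFS construction (the cone over a minimal free resolution mapping to $\RR_{\on{classical}}(M_{\ge d})$, $d \ge \reg M$) is literally such a cone, via $\RR = \Fold\circ\RR_{\on{classical}}$ from Remarks~\ref{RRbicomplex}(2); in particular it needs no input from \cite[Theorem~4.1]{EFS}. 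You instead invoke the ``in particular'' uniqueness clause and verify Properties (1)--(4) of Theorem~\ref{thm:toric exact} for $\Fold\circ\Tate_{\on{EFS}}(\F)$, importing \cite[Theorem~4.1]{EFS} for Property (2); this is a legitimate alternative, and Properties (1), (2), (4) go through as you say (note only that exactness belongs to (4), not (1)). The one point needing care is Property (3): the EFS construction supplies the embedding $\RR_{\on{classical}}(M_{\ge d}) \into \Tate_{\on{EFS}}(\F)$ only for high truncations, whereas Property (3) is stated for every $M$ with $H^0_B(M)=0$; to close this you should either identify the $H^0$-strand of $\Tate_{\on{EFS}}(\F)$ with $\RR_{\on{classical}}(\Gamma_*\F)$ (true, but an extra argument beyond the bare construction), or observe that the proof of Theorem~\ref{weightedthm} only uses Property (3) for one well-chosen finitely generated $M$, so taking $M=\Gamma_{\ge d}\F$ suffices. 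Finally, your remark that naturality upgrades the isomorphism to an equality of functors is loose---the isomorphism coming from Theorem~\ref{weightedthm} is not canonical---but the paper's statement is itself only meaningful up to the ambiguity (homotopy equivalence/isomorphism) inherent in the construction of $\Tate$, so this matches the paper's level of precision.
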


\begin{proof}
Clear from Theorem \ref{weightedthm} and the relation $\RR = \Fold \circ \RR_{\on{classical}}$ from Remarks \ref{RRbicomplex}(2).
\end{proof}

\begin{remark}
\label{truncations}
Suppose $\X = \PP^n$. As discussed in the beginning of this section, if $M$ is a finitely generated $S$-module, then $\RR(M_{\ge d})$ is quasi-isomorphic to its homology when $d \ge \on{reg}(M)$. Thus, the Tate resolution of $\widetilde{M}$ can be defined by resolving $H(\RR(M_{\ge d}))$, rather than all of $\RR(M_{\ge d})$; this is the approach taken in \cite{EFS} and also \cite{EES} over multiprojective spaces. Unfortunately, this does not work more generally; for instance, one can check that this fails in the example discussed in Remark \ref{rmk:hirz non linear and qiso}, over a Hirzebruch surface of type 3. 
\end{remark}

It is not always the case that $\cone(F \xra{\simeq} \RR(M))$, where $F$ is the minimal free resolution of $\RR(M)$, yields the Tate resolution.  For instance, if $X=\PP^1 \times \PP^1$, then that mapping cone is the corner complex as defined in~\cite[\S3]{EES}.  
However, we do have:

\begin{thm}
\label{comparison}
When $\X$ is a product of projective spaces, any differential module $T$ with Properties (1) - (4) in Theorem \ref{thm:toric exact} satisfies $T = \Fold \circ \Tate_{\on{EES}}$, where the functor $\Fold$ is as introduced in Definition \ref{defn:fold}, and $\Tate_{\on{EES}}$ denotes Eisenbud-Erman-Schreyer's Tate resolution functor. In particular, Properties (1) - (4) in Theorem \ref{thm:toric exact} determine the Tate resolution up to isomorphism in this case. \end{thm}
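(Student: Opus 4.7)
The plan is to follow the blueprint of Theorem~\ref{weightedthm}: first show that $T':=\Fold\circ\Tate_{\on{EES}}(\F)$ satisfies properties (1)--(4) of Theorem~\ref{thm:toric exact}, then argue that these properties determine the differential module up to isomorphism. Properties (1)--(3) for $T'$ follow from the main results of \cite{EES} together with the compatibility $\RR=\Fold\circ\RR_{\on{classical}}$ from Remarks~\ref{RRbicomplex}(2), and the observation that $\Fold$ preserves minimality, freeness, and the encoding of sheaf cohomology by ranks of free summands. Property~(4), the new ingredient beyond the case of $\PP^n$, is precisely the ``exact strands'' theorem of \cite[\S3]{EES}: the irrelevant subsets for a product of projective spaces are exactly those containing the full set of variables of at least one factor, and these correspond to the EES strands shown exact in loc.~cit.

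For uniqueness, I would take $M:=\bigoplus_{a\in\Cl(X)}H^0(\X,\F(a))$ (which satisfies $H^0_B(M)=0$ and $\widetilde M=\F$). Property~(3) embeds $\RR(M)$ into $T$ as a differential submodule, and property~(2) identifies the underlying $E$-module of $T$ with $\RR(M)\oplus N$, where $N:=\bigoplus_{i\geq 1}\bigoplus_{a} H^i(\X,\F(a))\otimes_k\om_E(-a,i)$. A degree calculation forces the differential of $T$ to have the lower-triangular block form $\left(\begin{smallmatrix}\del_\RR & \alpha\\0 & \beta\end{smallmatrix}\right)$, exhibiting $T$ as $\cone(\alpha)$ for some $\alpha\colon(N,-\beta)\to\RR(M)$; exactness of $T$ (property~(4) with $I=\{0,\dots,n\}$) then forces $\alpha$ to be a quasi-isomorphism. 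If $(N,-\beta)$ is a free flag, Theorem~\ref{EU} identifies it with the minimal free resolution of $\RR(M)$, and the same analysis applied to $T'$ yields the desired isomorphism $T\cong T'$.

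The main obstacle is constructing the free flag on $(N,-\beta)$. For weighted projective stacks the $\Z$-grading on $\Cl(X)$ permits a simple $\Z$-indexed flag via Serre vanishing, but this fails for a product of projective spaces because multigraded Serre vanishing only holds in the componentwise positive direction. I would remedy this by refining the componentwise partial order on $\Cl(X)=\Z^r$ to a total order---say, lexicographic order---and defining, for $d\in\Cl(X)$,
\[
F_d:=\bigoplus_{i\geq 1}\bigoplus_{\substack{a\in\Cl(X)\\ -a\leq_{\on{lex}}d}}H^i(\X,\F(a))\otimes_k\om_E(-a,i).
\]
Minimality of $T$ forces every nonzero matrix entry of the differential on $N$ to send a summand indexed by $(a,i)$ to one indexed by $(a',i')$ with $a'>a$ componentwise, and hence with $-a'<_{\on{lex}}-a$; this ensures each $F_d$ is a differential submodule and that the induced differential on $F_d/F_{<d}$ vanishes. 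Multigraded Serre vanishing then implies that the set of multidegrees $d$ with nontrivial successive quotient is bounded below componentwise---hence well-ordered under lex---so the flag can be reindexed by $\N$ and is exhaustive, completing the argument.
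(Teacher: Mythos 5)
The first half of your plan (checking that $\Fold\circ\Tate_{\on{EES}}$ has properties (1)--(4)) is fine, but the uniqueness half has a genuine gap, and in fact it is aimed at a false conclusion. Your argument, if it worked, would show that any $T$ satisfying (1)--(4) is isomorphic to $\cone(F\to\RR(M))$ with $F$ the minimal free resolution of the differential module $\RR(M)$. The paper points out immediately before this theorem that for $X=\PP^1\times\PP^1$ this mapping cone is the \emph{corner complex} of \cite[\S3]{EES}, which is not the Tate resolution; since the Tate resolution itself satisfies (1)--(4), your argument cannot be repaired as stated. The precise failure is in the flag construction: it is not true that ``multigraded Serre vanishing implies the set of multidegrees with nontrivial successive quotient is bounded below componentwise.'' Serre vanishing only kills $H^{\geq 1}(\X,\F(a))$ for $a$ sufficiently positive, while on a product of projective spaces the intermediate cohomology lives in unbounded mixed regions: already for $\F=\cO_{\PP^1\times\PP^1}$ one has $H^1(\cO(a_1,-2))\neq 0$ for all $a_1\geq 0$, so the twists $\om_E(-a;1)$ occurring in $N$ have $-a=(-a_1,2)$ with $-a_1\to-\infty$. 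This set contains infinite strictly descending chains for the lexicographic (or any monomial-type) order, so your filtration $\{F_d\}$ cannot be reindexed by $\N$ and $(N,-\beta)$ is not exhibited as a free flag. This is not a technicality: being free, minimal, and quasi-isomorphic to $\RR(M)$ via $\alpha$ does \emph{not} make $(N,-\beta)$ the minimal free resolution of $\RR(M)$ without the flag condition, and for products of projective spaces the tail of the genuine Tate resolution is in fact not that minimal free resolution.

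The paper avoids this by never trying to resolve all of $\RR(M)$ at once. It unfolds $T$ to a complex of $E$-modules, uses the exactness properties (4) to get exact strands, invokes \cite[Theorem~3.3]{EES} to conclude that every corner complex of $T$ is exact, and then identifies, for $d\gg 0$, the degree-$d$ corner head with $\RR(M_{\geq d})$ using (2) and (3). Exactness of the corner complex plus minimality then force the degree-$d$ tail to be the minimal free resolution of $\RR(M_{\geq d})$ -- here the well-foundedness problem disappears because one works with a truncation -- and since these tails are uniquely determined and agree with those of $\Tate_{\on{EES}}(\F)$ for all $d\gg 0$, one concludes $T\cong\Fold\circ\Tate_{\on{EES}}(\F)$. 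If you want to salvage your approach, you would have to replace ``minimal free resolution of $\RR(M)$'' by the corner/truncation data in this way; a direct cone-over-$\RR(M)$ description simply does not hold outside the weighted projective (rank-one class group) case.
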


\begin{proof}
The differential module $T$ can be realized as the folding of a complex of $E$-modules, as in Definition~\ref{defn:fold}. We may therefore assume that $T$ is a complex. The exactness properties--that is, Theorem~\ref{thm:toric exact}(4)--imply that $T$ has exact strands.  By \cite[Theorem~3.3]{EES}, this implies that every corner complex of $T$ is also exact.  Let $T_{d-\text{tail}} \to T_{d-\text{head}}$ be the degree $d$ corner complex of $T$, as defined in~\cite[Section 3]{EES}.   Theorem~\ref{thm:toric exact}(2) and (3) imply that, for $d\gg 0$,  $T_{d-\text{head}}$ is precisely $\RR(M_{\geq d})$.  Exactness of the corner complex and minimality of $T$ imply that $T_{d-\text{tail}}$ is the minimal free resolution of $\RR (M_{\geq d})$ and is thus uniquely determined.  It follows that $T_{d-\text{tail}}$ is isomorphic to the analogous degree $d$ tail of $\Tate (\cF)$ for any $d\gg 0$; we conclude that $T$ and $\Tate (\cF)$ are isomorphic complexes.
\end{proof}

While the toric stacks considered in Theorems \ref{weightedthm} and \ref{comparison} are relatively simple, we believe that similar results should hold much more broadly; see Conjecture \ref{c:intrinsic} below. But carrying out such a generalization will require new homological methods.  The proofs of Theorems~\ref{weightedthm} and \ref{comparison} rely on the fact that we can construct $\Tate (\F)$ from $\RR (M)$ by taking minimal free resolutions of complexes or differential modules in these cases.  However, this approach makes no explicit reference to the irrelevant ideal of $X$. By considering situations where one has two toric varieties $X$ and $X'$ with the same Cox ring but with different irrelevant ideals, one can see that characterizing the Tate resolution up to isomorphism in general will require an approach that makes greater use of the exactness properties of Theorem~\ref{thm:toric exact}(3). To underscore the algebraic challenge: a free resolution yields an object that is exact, but how does one produce an object with all of the exactness properties from~Theorem~\ref{thm:toric exact}(3)?

\subsection{Tate resolutions over $\PP^n$ and Beilinson's resolution of the diagonal}
\label{othertorics}
A major emphasis of \cite{EFS} is the relationship between Tate resolutions and Beilinson monads
on $\PP^n$. Our ``noncommutative Fourier-Mukai" construction of Tate resolutions over $\PP^n$ takes this relationship a step further. The goal of this subsection is to explain this, and in doing so provide some intuition for our construction of toric Tate resolutions. 

Let $\cB$ denote Beilinson's resolution of $\OO_\Delta$ in $\Coh(\PP^n \times \PP^n)$, which has the form:
$$
0 \from \OO \boxtimes \OO \from \OO(-1) \boxtimes \Omega^{1}(1) \from \cdots \from \OO(-n) \boxtimes \Omega^n(n) \from 0;
$$
it is a Koszul complex on the section $\sum_{i = 0}^n x_i \boxtimes \frac{\partial}{\partial y_i}$ of $\OO_{\PP^n}(1) \boxtimes T_{\PP^n}(-1))$.
Let $\Phi_{\cB}$ denote the Fourier-Mukai transform given by $\F \mapsto R\pi_{2*}(\pi_1^*( \F) \otimes \cB)$.   Since $\cB$ is a resolution of the diagonal, $\Phi_{\cB}(\F)$ is isomorphic to $\F$ in $\Db(\PP^n)$.  From~\cite{beilinson}, there is a natural ``Beilinson spectral sequence'' whose $E^1$ terms have the form $H^i(\PP^n,\F(-j)) \otimes \Omega^j_{\PP^n}(j)$.  But one can go further and produce an actual complex of sheaves
whose terms are sums of these sheaves and whose $0^{\th}$ homology is $\F$. Such a complex is called a \defi{Beilinson monad} for $\F$; see~\cite{AO, EFS, kapranov} for details.

To explicitly construct the Beilinson monad, Eisenbud-Fl\o ystad-Schreyer define a functor $\mathbf{\Omega}$ which sends a free complex of $E$-modules\footnote{In \cite{EFS}, the exterior algebra has the $\ZZ$-grading where $\deg(e_i)=-1$ for all $i$.} to a complex of $\cO_{\PP^n}$-modules \cite[\S 6]{EFS}.  The functor is defined by the formula $\mathbf{\Omega}(\omega_E(j))=\Omega_{\PP^n}^j(j)$, with the key point being that there is a canonical map $\Hom_E(\om_E(i), \om_E(j)) \to \Hom_{\PP^n}(\Omega_{\PP^n}^i(i), \Omega_{\PP^n}^j(j))$.  They prove that applying $\mathbf{\Omega}$ to the Tate resolution of $\F$ yields a Beilinson monad for $\F$.

Our construction demonstrates that this connection can be ``lifted'' one level further.  
Indeed, the object $\K \in \DM(\PP^n_E)$ is equivalent, via the $\Fold$ functor (Definition \ref{defn:fold}), to:
\begin{equation}
\label{Kst}
\cdots \gets \OO_{\PP^n} \otimes_k \om_E \gets \OO_{\PP^n}(1) \otimes_k \om_E(-1) \gets \cdots \gets \OO_{\PP^n}(-n) \otimes_k \omega_E(-n) \gets \cdots 
\end{equation}
Since $\mathbf{\Omega}(\omega_E(i))=0$ if $i>n$ or $i<0$, replacing each $\om_E(i)$ with $\mathbf{\Omega}(\om_E(i))$ transforms the doubly infinite complex \eqref{Kst} into a bounded one whose terms are identical to those of Beilinson's resolution. The intuition is that applying ``$\id\times \mathbf{\Omega}$" to \eqref{Kst} recovers Beilinson's resolution.\footnote{As an aside, we note that one may also think of the bicomplex \eqref{cechbicomplex} above as a ``lift from $\PP^n$ to $E$" of the bicomplex that gives the Beilinson spectral sequence.} 
The situation may be summarized by the following diagram:
\begin{equation}
\label{keydiagram}
\xymatrix{
K \ar[rr]^-{}\ar[d]^{\rho}&& \mathcal K\ar[d]^{R\pi_{2*}}\ar[rr]^-{``\id \times \mathbf{\Omega}"}&&   \text{Beilinson's resolution of $\OO_\Delta$} \ar[d]^-{R\pi_{2*}}\\
\RR(S) \ar[rr]^-{\text{ inclusion }} && \Tate(\cO_{\PP^n}) \ar[rr]^-{ \mathbf{\Omega}} && \text{Beilinson monad of $\OO_{\PP^n}$}.
}
\end{equation}
Here, $K$ denotes the Koszul complex on the variables in $S$, considered as an $S$-$E$-bimodule; the map $\rho$ denotes restriction of scalars along the inclusion $E \into S \otimes_k E$; and the map $K \to \K$ is given by sheafification and inclusion. The bottom row follows from \cite[Theorem 6.1]{EFS}, and our construction of the Tate resolution provides the rest of the diagram. 
So, while \cite{EFS} use a Tate resolution to recover the Beilinson monad, our construction directly connects to Beilinson's resolution itself.  Theorem \ref{BMdiagram} and Proposition \ref{1U} below make the ideas conveyed by this diagram precise and extend them to any projective toric stack.

\subsection{Examples of toric Tate resolutions}\label{subsec:tate examples}

The exactness properties in Theorem \ref{thm:toric exact}(4) underscore a key point: if $\F$ is a sheaf on $\X$, then $\Tate (\F)$ depends not just on the Cox ring of $\X$ but on its irrelevant ideal. 
The following examples are intended to illustrate this point.
\begin{example}
Let $\X=\PP(1,1,2)$. In Example \ref{weightedexample}, we computed $\RR(S)$; let us now compute $\Tate(\OO_\X)$. By Theorem \ref{weightedthm}, we need only compute the minimal free resolution $F$ of $\RR(S)$ and take the mapping cone of the augmentation $F \to \RR(S)$. The result looks like:
\footnotesize
\[
\xymatrix{
\cdots\ar@/_-1.5pc/[rr]\ar[r]&\om_E(5;2)^{\oplus 2}\ar[r]&\om_E(4;2) \ar[r]\ar@/_-1.5pc/[rrrr]^-{e_0e_1e_2}& 0\ar[r]& 0\ar[r]& 0\ar[r]&\om_E \ar[r]\ar@/_-1.5pc/[rr]& \om_E(-1;0)^{\oplus 2}\ar[r]
&\cdots
}
\]
\normalsize
\vskip\baselineskip
\noindent In detail: consider the $S$-module $M = \bigoplus_{ a \in \Z} H^2(\X, \OO_{\X}(a))$ as a complex concentrated in homological degree $-2$. The minimal free resolution $F$ of $\RR(S)$ is given by $\RR(M)$. The augmentation is given by a single map from $F$ to $\RR(S)$ that sends $\om_E(4;2)$ to $\om_E(0;0)$ via multiplication by $e_0e_1e_2$.  Note, for instance, that the $\om_E(5;2)^{\oplus 2}$ term encodes the fact that $H^2(\X,\cO_{\X}(-5)) = k^2$.
\end{example}

\begin{example}
Continuing with $\X=\PP(1,1,2)$, let $C\subseteq \X$ be the genus 1 curve $C=V(x_0^4+x_1^4+x_2^2)$.
The Tate resolution $\Tate(\OO_C)$ has the form:
\footnotesize
\[
\xymatrix{
\cdots \ar[r] \ar@/_2pc/[rr]^-{}& \om_E(2;1)^{\oplus 4} \ar[r]^-{}\ar@/_-2pc/[rr]^-{}&\om_E(1;1)^{\oplus 2}\ar[r]^-{}\ar[r]^-{}\ar@/_2pc/[rr]_-{}&\om_E(0;1)\oplus \om_E(0;0)\ar@/_-2pc/[rr]^-{} \ar[r] & \omega(-1;0)^{\oplus 2}\ar[r] \ar@/_2pc/[rr]^-{}&\omega(-2;0)^{\oplus 4} \ar[r]& \cdots
}
\]
\noindent \normalsize  
The terms from $\RR\left(\frac{S}{x_0^4+x_1^4+x_2^2}\right)$ are those with no twist in the auxiliary degree, i.e. those of the form $\omega_E(i; 0)$ for some $i$.  Note also that the middle term $\om_E(0;1)\oplus \om_E(0;0)$ corresponds to the fact that, because $C$ is a genus one curve, we have $H^1(\X,\cO_C) = k = H^0(\X,\cO_C)$.
\end{example}

\begin{example}
For another example with $\X=\PP(1,1,2)$, let $P$ denote the stacky point $V(x_0,x_1)$.  Of course $\cO_P$ has no higher cohomology groups, and we have
$
H^0(\X, \cO_P(d)) = k$ when $d$ is even and $0$ when $d$ is odd.  The Tate resolution $\Tate(\cO_P)$ thus looks as follows:
\footnotesize
\[
\xymatrix{
\cdots \ar[r]&\om_E(2;0) \ar[r]\ar@/_-1.5pc/[rr]^-{e_2}& 0\ar[r]&\om_E \ar[r]\ar@/_-1.5pc/[rr]^-{e_2}& 0\ar[r]& \om_E(-2;0)\ar[r]\ar@/_-1.5pc/[rr]^-{e_2} &0\ar[r]&\om_E(-4;0)\ar[r] &\dots
}
\]
\normalsize
\end{example}

\begin{example}\label{ex:hirz3 tate}
Let $\X = X$ be the Hirzebruch surface of type $3$, with Cox ring $S$ as described in Example \ref{ex:RR hirz}.  The irrelevant ideal is $B = (x_0, x_2) \cap (x_1, x_3)$. The Tate resolution $\Tate(\cO_{X})$ has summands corresponding to $H^0, H^1$ and $H^2$ groups.  We illustrate these in the pictures below, with the left picture depicting the degrees where the sheaf has $H^0$; the middle picture the $H^1$'s; and the right picture the $H^2$'s:

\[
\begin{tikzpicture}[scale = .25]
\draw[step=1cm,black,very thin] (0,0) grid (12,12);
\draw[fill=gray!80] (6,6)--(0,8)--(0,12)--(12,12)--(12,6)--(6,6);
\end{tikzpicture}
\qquad \qquad
\begin{tikzpicture}[scale = .25]
\draw[step=1cm,black,very thin] (0,0) grid (12,12);
\draw[fill=gray!60] (4,6)--(0,6)--(0,12)--(4,12)--(4,6);
\draw[fill=gray!60] (9,4)--(12,4)--(12,0)--(9,0)--(9,4);
\end{tikzpicture}
\qquad \qquad
\begin{tikzpicture}[scale = .25]
\draw[step=1cm,black,very thin] (0,0) grid (12,12);
\draw[fill=gray!40] (7,4)--(10,3)--(12,2.33)--(12,0)--(0,0)--(0,4)--(7,4);
\end{tikzpicture}
\]
Putting these together, we get a picture that looks like

\[
\begin{tikzpicture}[scale=0.25]
\draw[step=1cm,black,very thin] (0,0) grid (12,12);
\draw[fill=gray!80] (6,6)--(0,8)--(0,12)--(12,12)--(12,6)--(6,6);
\draw[fill=gray!60] (4,6)--(0,6)--(0,12)--(4,12)--(4,6);
\draw[fill=gray!60] (9,4)--(12,4)--(12,0)--(9,0)--(9,4);
\draw[fill=gray!40] (7,4)--(10,3)--(12,2.33)--(12,0)--(0,0)--(0,4)--(7,4);
\end{tikzpicture}
\]
To illustrate the Exactness Properties, let us consider the irrelevant subset $I=\{0,2\}$.  Theorem~\ref{thm:exactness properties} implies that the Tate resolution remains exact after modding out by $e_1,e_3$. Since the variables $x_0$ and $x_2$ have degree $(1,0)$, the differential on this restricted Tate resolution will be ``purely horizontal''; that is to say, the differential will send each generator only to elements to the right and in the same row.  
\[
\begin{tikzpicture}[scale=0.25]
\draw[step=1cm,black,very thin] (0,0) grid (12,12);
\draw[fill=gray!80] (6,6)--(0,8)--(0,12)--(12,12)--(12,6)--(6,6);
\draw[fill=gray!60] (4,6)--(0,6)--(0,12)--(4,12)--(4,6);
\draw[fill=gray!60] (9,4)--(12,4)--(12,0)--(9,0)--(9,4);
\draw[fill=gray!40] (7,4)--(10,3)--(12,2.33)--(12,0)--(0,0)--(0,4)--(7,4);
\draw[->,thick] (6,7)--(7,7);
\draw[->,thick] (6,11)--(7,11);
\draw[->,thick] (6,3)--(7,3);
\draw[->,thick] (2,7)--(3,7);
\draw[->,thick] (2,11)--(3,11);
\draw[->,thick] (2,3)--(3,3);
\draw[->,thick] (10,7)--(11,7);
\draw[->,thick] (10,11)--(11,11);
\draw[->,thick] (10,3)--(11,3);
 \end{tikzpicture}
 \]
 The differential module $\Tate(\cO_X)\otimes_E E_{\{0,2\}}$ therefore decomposes as a direct sum of exact differential modules corresponding to the rows of the above picture.

Now consider the  irrelevant subset $I=\{1,3\}$.  Since the variables $x_1$ and $x_3$ have degrees $(-3,1)$ and $(0,1)$, the differential on this restricted Tate resolution will be ``vertical'' and also ``vertical and to the left".  That is, the differential on $\Tate(\cO_X)\otimes_E E_{\{1,3\}}$ looks like this:
\[
\begin{tikzpicture}[scale=0.25]
\draw[step=1cm,black,very thin] (0,0) grid (12,12);
\draw[fill=gray!80] (6,6)--(0,8)--(0,12)--(12,12)--(12,6)--(6,6);
\draw[fill=gray!60] (4,6)--(0,6)--(0,12)--(4,12)--(4,6);
\draw[fill=gray!60] (9,4)--(12,4)--(12,0)--(9,0)--(9,4);
\draw[fill=gray!40] (7,4)--(10,3)--(12,2.33)--(12,0)--(0,0)--(0,4)--(7,4);
\draw[->,thick] (6,7)--(3,8);
\draw[->,thick] (6,7)--(6,8);
\draw[->,thick] (6,10)--(3,11);
\draw[->,thick] (6,10)--(6,11);
\draw[->,thick] (6,2)--(3,3);
\draw[->,thick] (6,2)--(6,3);
\draw[->,thick] (10,7)--(7,8);
\draw[->,thick] (10,7)--(10,8);
\draw[->,thick] (10,10)--(7,11);
\draw[->,thick] (10,10)--(10,11);
\draw[->,thick] (10,2)--(7,3);
\draw[->,thick] (10,2)--(10,3);
 \end{tikzpicture}
\]
\end{example}

\begin{example}\label{ex:P113 tate}
Let $S$ be as in the previous example, but alter the irrelevant ideal so that it is now $B':=(x_1)\cap (x_0,x_2,x_3)$.   While this does not fit into the setup of Theorem~\ref{thm:toric exact}--the irrelevant ideal of a toric variety cannot have a principal minimal prime--our main construction easily extends to this case.  Let $\X'$ be the corresponding stack quotient; the corresponding toric variety $X'$ is the weighted projective space $\PP(1,1,3)$ obtained by contracting the exceptional divisor $V(x_1)$ on the Hirzebruch surface $X$. Since $S$ has remained unchanged from the previous example, $\RR(S)$ is the same as before. However, the Tate resolution $\Tate (\cO_{\X'})$ looks quite different, with no $H^1$ degrees, and $H^0$ and $H^2$ degrees illustrated in dark and light gray as below:
\[
\begin{tikzpicture}[scale = .25]
\draw[step=1cm,black,very thin] (0,0) grid (12,12);
\draw[fill=gray!80] (0,8)--(12,4)--(12,12)--(0,12)--(0,8);
\draw[fill=gray!40] (0,3)--(9,0)--(12,0)--(0,0)--(0,3);
\end{tikzpicture}
\]
This Tate resolution and the one in Example~\ref{ex:hirz3 tate} satisfy different exactness properties.  For instance, consider $I=\{1\}$, which is irrelevant for $\X'$ but not for the Hirzebruch surface.  Since $\deg(x_1)=(-3,1)$, the differential on $\Tate (\cO) \otimes_E E_{\{1\}}$ looks like this:
\[
\begin{tikzpicture}[scale = .25]
\draw[step=1cm,black,very thin] (0,0) grid (12,12);
\draw[fill=gray!80] (0,8)--(12,4)--(12,12)--(0,12)--(0,8);
\draw[fill=gray!40] (0,3)--(9,0)--(12,0)--(0,0)--(0,3);
\draw[->,thick] (6,7)--(3,8);
\draw[->,thick] (6,10)--(3,11);
\draw[->,thick] (6,0)--(3,1);
\draw[->,thick] (10,7)--(7,8);
\draw[->,thick] (10,10)--(7,11);
\draw[->,thick] (10,5)--(7,6);
\end{tikzpicture}
\]
The diagonals with slope $-\frac{1}{3}$ yield exact ``strands'' of the Tate resolution $\Tate(\cO_{\X'})$.  Far enough out along any such diagonal in the $H^0$ region, this Tate resolution will agree with $\RR(S)$; in particular, any diagonal of $\RR(S)$ is ``eventually exact''. By contrast, the ``rows'' of $\Tate(\cO_{\X'})$ are not exact, which is to be expected, since $\{0,2\}$ is not an irrelevant subset for $\X'$. 
\end{example}

\begin{remark}\label{rmk:Mori}
Examples~\ref{ex:hirz3 tate} and \ref{ex:P113 tate} point toward a new feature in the multigraded case: when $S$ is the Cox ring of a toric variety $X$, the properties of multigraded $S$-modules are related not just to the geometry of $X$, but also to the other toric varieties that can arise in the Mori chamber decomposition of the effective cone of $X$.  In particular, we used a Tate resolution on the Hirzebruch surface to understand the asymptotic exactness properties of $\RR(S)$ along rows and columns, and we used a Tate resolution for $\PP(1,1,3)$ to understand the exactness properties of $\RR(S)$ along the diagonals of slope $-\frac{1}{3}$.  This is an example of the sort of behavior to which we alluded in Remark~\ref{rmk:intro Mori}.
\end{remark}

\section{A linear resolution of the diagonal for toric stacks}\label{sec:res diagonal}

In this section, we construct a Beilinson-type resolution of the diagonal for projective toric stacks. As discussed in \S\ref{othertorics}, Beilinson's resolution of the diagonal over $\PP^n$ arises as a Koszul complex, and our toric version does as well. 

Let $\X$ be a projective toric stack with Cox ring $S= k[x_0, \dots, x_n]$ and associated toric variety $X$. Denote the Cox ring of $\X \times \X$ by $S' = k[x_0, \dots, x_n, y_0, \dots, y_n]$. Let $\Delta\colon \X \to \X \times \X$ be the diagonal morphism, and set $\OO_\Delta := \Delta_*(\OO_\X)$. The naive equations ``$x_i=y_i$'' are not homogeneous and hence do not yield well-defined equations in $S'$. However, we can force these naive equations to be homogeneous using a simple trick.  Let $V \subseteq \Cl(X)$ be the semigroup generated by $\deg(x_0), \dots, \deg(x_n)$, and consider the semigroup ring $R = S'[V]$ as a quotient of the polynomial ring $S'[u_0, \dots, u_n]$,  where each $u_i$ maps to $\deg(x_i) \in V$. Equip $R$ with the $\Cl(X) \oplus \Cl(X)$-grading such that $\deg(u_i) = (\deg(x_i), -\deg(x_i))$.  Each relation
$
x_i - y_iu_i
$
is now homogeneous in $R$, of degree $(\deg(x_i),0)$.  

\begin{thm}
\label{resdiag}
Let $F$ be the Koszul complex on $x_0 - y_0u_0, \dots, x_n - y_nu_n \in R$. We have:
\begin{enumerate}
\item $F$ is acyclic; that is, $H_i(F) = 0$ for $i > 0$.
\item Viewing $H_0(F)$ as an $S'$-module, we have $\widetilde{H_0(F)} = \OO_\Delta$.
\end{enumerate}
In particular, $\widetilde{F}\in \Coh(\X \times \X)$ is a resolution of $\OO_\Delta$.
\end{thm}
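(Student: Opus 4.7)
My plan is to prove Theorem \ref{resdiag} in two stages. Stage 1 handles acyclicity via a regular-sequence argument. Observing that $R = S'[\Eff(X)] \cong S[\Eff(X)][y_0, \dots, y_n]$ is a polynomial ring in the $y_i$'s over the coefficient ring $S[\Eff(X)]$, and that each $y_i - x_i u_i$ (with $u_i := t^{\deg(x_i)}$ regarded in the coefficient ring) is monic in $y_i$, the sequence $(y_0 - x_0 u_0, \dots, y_n - x_n u_n)$ is regular. Hence $F$ is acyclic and $H_0(F) \cong R/(y-xu) \cong S[\Eff(X)]$, with the isomorphism sending $y_i \mapsto x_i\, t^{\deg(x_i)}$.

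For Stage 2, I plan to identify $\widetilde{H_0(F)}$ with $\OO_\Delta$ by comparing with $\widetilde{S}$, where $S$ is viewed as an $S'$-module through the diagonal ring map $\mu\colon S' \to S$ sending $x_i, y_i \mapsto x_i$. A graded-piece check gives $\widetilde{S} = \Delta_* \OO_\X = \OO_\Delta$, since the $(a,b)$-bidegree piece of $S$ equals $S_{a+b} = H^0(\X, \OO(a+b))$. I define an $S'$-module map $\phi\colon H_0(F) \to S$ by $x_i \mapsto x_i$, $t^d \mapsto 1$, and will show $\widetilde{\phi}$ is an isomorphism by verifying it on each affine chart $U = U_\sigma \times U_{\sigma'}$ of $\X \times \X$.

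On such a chart, sections of $\widetilde{H_0(F)}$ form the bidegree-$(0,0)$ piece of $H_0(F)[(x^{\hat\sigma}y^{\hat\sigma'})^{-1}]$, where $x^{\hat\tau} := \prod_{\rho \notin \tau} x_\rho$ and $c_\tau := \deg(x^{\hat\tau})$. Any such element has the form $f/(x^{\hat\sigma}y^{\hat\sigma'})^N$ with $f$ in bidegree $N(c_\sigma, c_{\sigma'})$; by the bigraded structure of $H_0(F) = S[\Eff(X)]$, such $f$ must equal $g \cdot t^{N c_{\sigma'}}$ with $g \in S_{N(c_\sigma + c_{\sigma'})}$. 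Using the identity $y^{\hat\sigma'} = x^{\hat\sigma'} \cdot t^{c_{\sigma'}}$ in $H_0(F)$, the $t$-factors cancel to give
\[
\bigl(H_0(F)[(x^{\hat\sigma}y^{\hat\sigma'})^{-1}]\bigr)_{(0,0)} \;=\; \bigl(S[(x^{\hat\sigma}x^{\hat\sigma'})^{-1}]\bigr)_0 \;=\; A_{\sigma \cap \sigma'},
\]
the coordinate ring of $U_{\sigma \cap \sigma'} \cong \Delta \cap U$ (using separatedness of $\X$). The analogous computation for $\widetilde{S}|_U$ yields the same ring $A_{\sigma \cap \sigma'}$, and $\phi_U$ is the identity between these identifications; hence $\widetilde{\phi}$ is an isomorphism of sheaves.

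The main obstacle is the local computation in Stage 2: correctly identifying the bidegree-$(0,0)$ piece of the localization. The clean cancellation of $t$-factors afforded by the identity $y^{\hat\sigma'} = x^{\hat\sigma'} \cdot t^{c_{\sigma'}}$ is what makes the computation go through without requiring any assumption on the location of $c_{\sigma'}$ within $\Eff(X)$ or on torsion in $\Cl(X)$.
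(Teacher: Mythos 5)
Your proposal is correct in substance, but it takes a genuinely different route from the paper's in both halves. For acyclicity, the paper reduces modulo $(x_0,\dots,x_n)$, observes that the result is a direct sum of Koszul complexes on the $y_i$'s, and then applies the graded homological Nakayama lemma (Lemma~\ref{homnak}) and induction to conclude that the sequence is regular; your observation that $R\cong S[\Eff(X)][y_0,\dots,y_n]$ with each $y_i-x_iu_i$ monic in $y_i$ over the coefficient ring gives regularity directly, and it hands you the explicit identification $H_0(F)\cong S[\Eff(X)]$, $y_i\mapsto x_it^{\deg x_i}$, for free. For the identification of $\widetilde{H_0(F)}$ with $\OO_\Delta$, the paper argues globally: it introduces $S_\Delta=\bigoplus_{(d,d')}H^0(\X\times\X,\OO_\Delta(d,d'))$, shows via Proposition~\ref{pres} (a Nakayama argument with a positive grading) that $\coker(F_1\to F_0)$ is the positive part $S_\Delta^+$, and concludes because $S_\Delta^+$ and $S_\Delta$ agree in high degrees. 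You instead check the isomorphism on the invariant affine charts of $\X\times\X$, using $y^{\hat\sigma'}=x^{\hat\sigma'}t^{c_{\sigma'}}$ in $H_0(F)$ to cancel the semigroup variable and land in $(S[(x^{\hat\sigma}x^{\hat\sigma'})^{-1}])_0$. What the paper's route buys is the intrinsic description of $H_0(F)$ as the positive part of the twisted-sections module of the diagonal (reused conceptually elsewhere); what yours buys is a shorter, more elementary argument with a completely explicit model of $H_0(F)$, valid with no extra hypotheses on torsion in $\Cl(X)$.

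One step to tighten: the assertion that ``a graded-piece check gives $\widetilde{S}=\Delta_*\OO_\X$.'' The object you call $S$ must be the bigraded module $\bigoplus_{(a,b)}S_{a+b}$ (the paper's $S_\Delta$), and knowing its graded pieces equal $H^0(\X,\OO(a+b))$ does not by itself determine the sheafification; you also need that the $S'$-action agrees with that on $\bigoplus_{(d,d')}H^0(\X\times\X,\OO_\Delta(d,d'))$ and then the standard fact that sheafifying the twisted-sections module recovers the sheaf (this is exactly how the paper treats $S_\Delta$). Alternatively, your chart computation already identifies the sections of $\widetilde{H_0(F)}$ over the chart indexed by $(\sigma,\sigma')$ with $(S[(x^{\hat\sigma}x^{\hat\sigma'})^{-1}])_0=\Gamma\bigl(\Delta^{-1}(U_\sigma\times U_{\sigma'}),\OO_\X\bigr)$, naturally in the chart, so you can compare directly with $\Delta_*\OO_\X$ and dispense with $\widetilde{S}$ altogether; in writing this up you should also record injectivity of the cancellation step, which holds because $t^{Nc_{\sigma'}}$ is a basis element of $k[\Eff(X)]$ and $S$ is a domain.
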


We can write the terms of the complex in Theorem \ref{resdiag} as 
\begin{equation}
\label{rewrite}
F_i = \bigoplus_{a \in V} \bigoplus_{b \in \Cl(X)} (\om_E)_{(b ; i)} \otimes_k S'(-a-b,a).
\end{equation}
Here, as above, $\om_E$ is the $k$-dual of the $\Cl(X) \oplus \Z$-graded exterior algebra $E = \Lambda_k(e_0, \dots, e_n)$. From this view, the differential on $F$ is multiplication on the left by $\sum_{i= 0}^n( e_i \otimes x_i  -  e_i \otimes y_i)$.

Another way of thinking of the complex $F$ is as follows. Let $K$ be the Koszul complex on $x_0, \dots, x_n \in S$. For any $d \in \Cl(X)$, write $K_d$ for the subcomplex of $K$ given by summands of the form $S(-e)$ with $d - e \in V$.
We have
\[
F = \bigoplus_{d \in V} S(-d) \boxtimes K_d(d);
\]
with differential $\sum_{i= 0}^n( x_i \otimes e_i  -  y_i \otimes e_i)$, where each $e_i$ shifts terms in the Koszul complex factor, each $y_i \in S'$ maps a summand to itself, and each $x_i \in S'$ maps between summands. 

This latter point of view clarifies the relationship between our resolution and Beilinson's resolution of the diagonal over $\PP^n$ (see \S \ref{othertorics}). When $X = \PP^n$, each complex $\widetilde{K_i(i)}$ of sheaves is a free resolution of $\Omega^i(i)$. In particular, $\widetilde{K_i(i)}$ is exact for $i > n$, and so the summands $S(-i) \boxtimes K_i(i)$ of $F$ with $i > n$ are superfluous; this is an idea to which we will return in \S \ref{sec:finiteres}. 

\begin{remark}
\label{anotherres}
We can give yet another way of constructing the complex $F$ in Theorem \ref{resdiag}, by applying the BGG functor $\LL$ to a module over the $\Cl(X) \oplus \Cl(X) \oplus \Z$-graded exterior algebra $E' = \Lambda_k(e_0, \dots, e_n, f_0, \dots, f_n)$. Let $N$ denote the free $E'$-module $\bigoplus_{d \in V} \om_{E'}(-d, d; 0)$, where $\om_{E'} = \underline{\Hom}_{E'}(E', k)$. It will be helpful to reinterpret $N$ as $\om_{E'}[V]$, just like the ring $R$, except that we declare here that $\deg(u_i) = (\deg(x_i), -\deg(x_i); 0) \in \Cl(X) \oplus \Cl(X) \oplus \Z$. Let $N_\Delta$ be the kernel of the map
$
N \to
\bigoplus_{i = 0}^n N(0, -\deg(x_i);-1)
$
whose $i^{\th}$ component is $f_i+e_iu_i$.
In other words, $N_\Delta$ is the $E'[V]$-submodule of $N$ generated by $\g_0 \cdots \g_n$, where $\g_i = x_i - y_iu_i$. One can check that $\LL(N_\Delta) = F$. 
\end{remark}

\begin{example}
\label{colors}
Say $\X = \PP(1, 2)$. Our complex $F$ has the following form:
\footnotesize
$$
\xymatrix{
\vdots & \vdots & \vdots \\
{\bf S'(-3, 3)}  & S'(-5, 3) \oplus S'(-4, 3)  \ar[l]\ar[lu]& S'(-6, 3)\ar[l]\ar[lu] \\
S'(-2, 2) & S'(-4, 2) \oplus {\bf S'(-3, 2)} \ar[l]\ar[lu]\ar[luu]& S'(-5, 2) \ar[l]\ar[lu]\ar[luu]\\
S'(-1, 1)  & {\bf S'(-3, 1)}  \oplus S'(-2, 1) \ar[l]\ar[lu]\ar[luu]& S'(-4, 1)\ar[l]\ar[lu] \ar[luu]\\
S'  & S'(-2, 0) \oplus S'(-1, 0) \ar[l]\ar[lu]\ar[luu]& {\bf S'(-3,0)}\ar[l]\ar[lu]\ar[luu]\\
}
$$
\normalsize
The $x_i$'s map each summand horizontally; $y_0$ maps horizontally and up one position, while $y_1$ maps horizontally and up two positions.  The complexes $S(-d) \boxtimes K_d(d)$ arise among the northwest diagonals; e.g. the bold summands (and corresponding arrows) form that complex for $d=3$. When $d\geq 3$, truncating has no effect, and the strand is simply $S(-d)\boxtimes K(d)$.
\end{example}


Unlike Beilinson's resolution of the diagonal over $\PP^n$~\cite{beilinson}, or the resolution of the diagonal for a weighted projective stack in~\cite{CK}, the resolution in Theorem \ref{resdiag} has infinite rank terms. However, we show in \S \ref{sec:finiteres} that, in these cases, our resolution can be pruned down to a finite rank complex that is nearly equivalent to those resolutions. 

\subsection{Proof of Theorem \ref{resdiag}}
\label{proofresdiag}

Denote by $S_\Delta$ the $S'$-module
\[
\bigoplus_{(d,d')\in \Cl(X) \oplus \Cl(X)} H^0(\X\times \X, \cO_{\Delta}(d,d')).
\]
Notice that
$
(S_\Delta)_{(d,d')}  = H^0(\X\times \X, \cO_{\Delta}(d,d'))=H^0(\X, \Delta^*\cO_{\X \times \X}(d,d'))=S_{d+d'}.
$
Equip $S$ with the $\Cl(X) \oplus \Cl(X)$-grading such that $f \in S$ has degree $(0,\on{deg}_S(f))$. The module $S_\Delta$ is a $\Cl(X) \oplus \Cl(X)$-graded $S$-module via the map $S \into S'$ sending $y_i$ to $y_i$.
In fact, for a fixed $d \in \Cl(X)$, $(S_\Delta)_{d,*}$ is a free $\Cl(X) \oplus \Cl(X)$-graded $S$-module with generator in degree $(d,-d)$; it follows that, as an $S$-module, we have
\[
S_\Delta = \bigoplus_{d \in \Cl(X)} S \cdot \epsilon_d, \text{ where $\epsilon_d$ is a generator with $\deg(\epsilon_d)=(d,-d)$}.
\]
Denote by $S_{\Delta}^+$ the submodule of $S_\Delta$ generated by the $\epsilon_d$ such that $d \in V$. In other words,
\[
S_{\Delta}^+ = \bigoplus_{(d,d')\in \Cl(X), \text{ } d \in V} H^0(\X\times \X, \cO_{\Delta}(d,d')).
\]
Since $S_\Delta$ and $S_{\Delta}^+$ agree in high degrees (for instance, they agree in all degrees $(d,d')$ where $d,d' \in V$), they determine the same sheaf on $\X\times \X$, which is to say that $\widetilde{S_{\Delta}^+} = \cO_{\Delta}$.

We can identify multiplication by $y_i$ and $x_i$ on $S_\Delta$ via the relations
\[
x_i\epsilon_d = y_i \epsilon_{d+\deg_S(y_i)}.
\]
We claim that this gives a presentation of $S_\Delta^+$:
\begin{prop}
\label{pres}
$S_\Delta^+$ has generators $\epsilon_d$ for all $d \ge 0$ and relations $x_i\epsilon_d = y_i \epsilon_{d+\deg(x_i)}$ for all $d \ge 0$ and all $i=0,\dots,n$.  In particular, the $S'$-module $S_\Delta^+$ has a free presentation
$
F_1 \xra{\varphi} F_0\to S_\Delta^+ \to  0,
$
where $F_0 = \bigoplus_{d \in V} S'(-d,d)$, and $F_1 = \bigoplus_{i = 0}^n F_0(-\on{deg}_S(x_i),0)$.
\end{prop}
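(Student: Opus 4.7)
The plan is to verify the presentation directly by comparing $\on{coker}(\varphi)$ to $S_\Delta^+$ along both the $S'$-module and $S$-module structures. First, define $\varphi \colon F_1 \to F_0$ on the generator of the $(i,d)$-summand by
\[
\varphi(e_{i,d}) = y_i \epsilon_d - x_i \epsilon_{d+\deg(x_i)},
\]
where $\epsilon_d \in F_0$ denotes the generator of the $d$-th summand. Define also the $S'$-linear map $\pi \colon F_0 \to S_\Delta^+$ sending $\epsilon_d \in F_0$ to $\epsilon_d \in S_\Delta^+$; this is well-defined and surjective because $\{\epsilon_d : d \ge 0\}$ generates $S_\Delta^+$ as an $S$-module (a fortiori as an $S'$-module). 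The relations $y_i \epsilon_d = x_i \epsilon_{d+\deg(x_i)}$ in $S_\Delta^+$, displayed just before the statement, show that $\pi$ kills $\on{im}(\varphi)$, so it descends to a surjective $S'$-linear map $\bar\pi \colon \on{coker}(\varphi) \to S_\Delta^+$. The degree check that $\varphi$ is homogeneous is routine from $\deg(y_i) = (0,\deg(x_i))$, $\deg(x_i) = (\deg(x_i), 0)$, and $\deg(\epsilon_d) = (-d,d)$.

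The core of the argument is to show $\bar\pi$ is injective, which I do by producing an $S$-linear inverse. Consider the $S$-linear map $\iota \colon \bigoplus_{d\geq 0} S \cdot \epsilon_d \to \on{coker}(\varphi)$ sending $\epsilon_d$ to the class of the corresponding generator of $F_0$. Using $y_i \epsilon_d \equiv x_i \epsilon_{d+\deg(x_i)}$ in $\on{coker}(\varphi)$, any monomial $\mathbf{y}^\alpha \mathbf{x}^\beta \epsilon_d \in F_0$ can be rewritten modulo $\on{im}(\varphi)$ by repeatedly trading a $y_i$ for an $x_i$ and a shift in the $d$-index. Since this strictly decreases the total $y$-degree, induction on total $y$-degree shows every element of $F_0$ reduces modulo $\on{im}(\varphi)$ to an $S$-linear combination of the $\epsilon_d$'s. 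This yields surjectivity of $\iota$.

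Finally, by construction the composition $\bar\pi \circ \iota$ is the identity on $\bigoplus_{d\geq 0} S \cdot \epsilon_d = S_\Delta^+$, under the given identification of $S_\Delta^+$ as a free $S$-module on the $\epsilon_d$. Hence $\iota$ is injective as well, so both $\iota$ and $\bar\pi$ are isomorphisms, and $\on{coker}(\varphi) \xrightarrow{\sim} S_\Delta^+$, which is exactly the desired presentation.

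The main obstacle is the reduction step in the second paragraph: one needs to be careful that the relations can always be applied in a way that reduces total $y$-degree while staying within the indexing set $\{d \geq 0\}$. The condition $d \geq 0$ is preserved because the relations only ever push $d$ to $d + \deg(x_i) \geq d$ in the effective cone, so no "negative" $\epsilon_{d'}$ is ever needed; this is precisely why the presentation works for $S_\Delta^+$ rather than for all of $S_\Delta$. The rest of the proof is just bookkeeping with the multigradings.
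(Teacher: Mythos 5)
Your proof is correct, and it takes a somewhat different route to injectivity than the paper does. Both arguments set up the same surjection $\coker(\varphi) \onto S_\Delta^+$ induced by the relations $y_i\epsilon_d = x_i\epsilon_{d+\deg_S(x_i)}$; the difference is in how that surjection is shown to be injective. The paper's proof notes that source and target are free $\Cl(X)\oplus\Cl(X)$-graded $S$-modules on generators in the same degrees and then invokes the graded Nakayama lemma with respect to an auxiliary positive $\ZZ$-grading $\theta'(a,b)=a+b$ coming from a positive $\Cl(X)$-grading on $S$ (the Appendix A machinery, which the paper reuses elsewhere, e.g.\ in the proof of Theorem 4.2 via Lemma B on homological Nakayama). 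You instead build an explicit $S$-linear section: the rewriting argument, by induction on total $y$-degree, shows the classes of the $\epsilon_d$ generate $\coker(\varphi)$ over $S$, and then $\bar\pi\circ\iota=\operatorname{id}$ together with surjectivity of $\iota$ forces $\bar\pi$ to be injective. This is more elementary and self-contained--no Nakayama and no choice of positive grading functional is needed, only the fact that the effective cone is closed under addition, so the index shifts $d\mapsto d+\deg_S(x_i)$ never leave $\{d\geq 0\}$--and it makes explicit the generation statement that the paper's phrase ``identical as free graded $S$-modules'' leaves implicit. What the paper's approach buys is brevity given the positivity framework already in place; what yours buys is transparency about exactly where effectivity of the degrees enters. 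One small caveat: your closing remark that the $d\geq 0$ constraint is ``precisely why the presentation works for $S_\Delta^+$ rather than for all of $S_\Delta$'' is a bit loose--the analogous presentation of all of $S_\Delta$ (indexing over all $d\in\Cl(X)$) would go through by the same argument; the real point is that effectivity of the shifts is what lets the restricted generator set $\{d\geq 0\}$ be closed under the rewriting, i.e.\ what makes $S_\Delta^+$ an $S'$-submodule presented by these generators alone. This does not affect the validity of the proof.
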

\begin{proof}
We have a surjection $\coker(\varphi) \onto S_\Delta^+$, and the source and target are identical as free $\Cl(X) \oplus \Cl(X)$-graded $S$-modules. Choosing a positive $\Cl(X)$-grading $\theta : \Cl(X) \to \Z$ for $S$ gives a positive $\Cl(X) \oplus \Cl(X)$-grading $\theta' : \Cl(X) \oplus \Cl(X) \to \Z$ of $S$ given by $\theta'(a, b) = a + b$. Since $S_\Delta^+$ is a positively $\Cl(X) \oplus \Cl(X)$-graded $S$-module with respect to $\theta'$, it follows from Nakayama's Lemma that the surjection $\coker(\varphi) \onto S_\Delta^+$ is an isomorphism.
\end{proof}

\begin{proof}[Proof of Theorem \ref{resdiag}]
Observe that $F\otimes S'/(y_0,\dots,y_n)$ is a direct sum of Koszul complexes, one for each $d \ge 0$. In particular, $F\otimes S'/(y_0,\dots,y_n)$ has homology concentrated in degree 0. It follows from Lemma \ref{homnak} and induction that $F$ is exact in positive degrees, i.e. the sequence $x_0 - y_0 u_0, \dots, x_n - y_nu_n$ of elements in $R$ is regular. Now observe that the differential $F_1 \to F_0$ is exactly the presentation map in Proposition \ref{pres}.
\end{proof}

\subsection{Finite rank resolutions of the diagonal}
\label{sec:finiteres}
We now show that, when $\X$ is a generalized weighted projective stack (as defined in \S \ref{mainresults}), $F$ has a finite rank subcomplex of length $n$ that is also a resolution of $\OO_\Delta$.  Recall that the divisor class group of the fake weighted projective space $X$ associated to $\X$ is $\Z \oplus A$ for some finite abelian group $A$. As in \S \ref{mainresults}, let $\pi$ be a surjective map $\Cl(X) \to \Z$, and write $w = \sum_{i = 0}^n \pi(\deg(x_i))$. Using the expression of $F$ in \eqref{rewrite}, let $F'$ be the subcomplex of $F$ with terms
$
F'_i = \bigoplus_{\pi(a) \ge 0} \bigoplus_{\pi(b) < w - \pi(a)} (\om_E)_{(b , i)} \otimes_k S'(-a-b,a).
$
Notice that $F'$ is finite rank. 

\begin{example}
When $\X = \PP(1,2)$, $F'$ is the subcomplex spanned by the northwest diagonal strands $ S(-d) \boxtimes K_d(d)$ from Example \ref{colors} for $d<3$.  The key idea is that, when $d \ge 3$, the strand $ S(-d) \boxtimes K_d(d)$ makes an irrelevant contribution to the homology of $F$, and so the cokernel of $F'\to F$ is entirely supported on the irrelevant ideal.
\end{example}

The following Theorem yields a finite rank resolution of the diagonal for a generalized weighted projective stack, giving a slight generalization of a result of Canonaco-Karp for weighted projective stacks \cite{CK}:

\begin{thm}
\label{finiteres}
The complex $\widetilde{F'}$ of sheaves on $\X \times \X$ is a resolution of the diagonal.
\end{thm}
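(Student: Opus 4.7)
The plan is to reduce Theorem~\ref{finiteres} to Theorem~\ref{resdiag}: since $\widetilde{F}$ already resolves $\OO_\Delta$, it suffices to show the quotient complex $F'' := F/F'$ is acyclic after sheafification on $\X \times \X$. I will first translate the defining condition $b < w-a$ of $F'$ into the strand decomposition $F = \bigoplus_{d \ge 0} K_d(d) \boxtimes S(-d)$: a summand $(\om_E)_{(b,i)} \otimes S'(a, -a-b)$ has its generator at $\Cl(X)^2$-bidegree $(-a, a+b)$, hence belongs to the strand indexed by $d = a+b$, and the condition $b < w - a$ translates to $d < w$. Using $K_d = K$ for $d \ge w$, this identifies $F'' = \bigoplus_{d \ge w} K(d) \boxtimes S(-d)$. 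The key structural fact, read off from $\deg(x_i) = (d_i, 0)$ and $\deg(y_i) = (0, d_i)$, is that the Koszul terms $e_i \otimes x_i$ preserve the strand index $d$, while the cross-terms $e_i \otimes y_i$ decrease $d$ by $d_i > 0$; this confirms that $F'$ is genuinely a subcomplex and that $F_{[w, D]} := \bigoplus_{w \le d \le D} K(d) \boxtimes S(-d)$ gives an exhaustive ascending filtration of $F''$ by subcomplexes.

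For each $D \ge w$, the subquotient $F_{[w, D]}/F_{[w, D-1]}$ is isomorphic to $K(D) \boxtimes S(-D)$ equipped only with the Koszul differential on $x_0, \dots, x_n$, since any cross-differential out of strand $D$ lands in $F_{[w, D-1]}$. Its homology is concentrated in degree zero and is isomorphic to $k(D) \boxtimes S(-D)$, where $k = S/(x_0, \dots, x_n)$. Since $k$ is supported on the irrelevant locus of the first factor of $\X \times \X$, the module $k(D) \boxtimes S(-D)$ has vanishing sheafification, so $\widetilde{K(D) \boxtimes S(-D)}$ is an exact complex of quasi-coherent sheaves on $\X \times \X$.

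A straightforward induction on $D$ using the short exact sequence $0 \to F_{[w, D-1]} \to F_{[w, D]} \to K(D) \boxtimes S(-D) \to 0$, the exactness of sheafification, and the long exact homology sequence shows $\widetilde{F_{[w, D]}}$ is exact for every $D$. Because $F'' = \bigcup_D F_{[w, D]}$ is a directed colimit, sheafification commutes with colimits, and filtered colimits of exact complexes of quasi-coherent sheaves are exact, the complex $\widetilde{F''} = \on{colim}_D \widetilde{F_{[w, D]}}$ is exact. This gives the quasi-isomorphism $\widetilde{F'} \to \widetilde{F}$, completing the proof. The step I anticipate requiring the most care is the bidegree bookkeeping showing that $e_i \otimes y_i$ lowers the strand index while $e_i \otimes x_i$ preserves it, as this underlies both the subcomplex property of $F'$ and the collapse of each associated graded piece to a pure Koszul complex.
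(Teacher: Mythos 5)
Your proof is correct, and its skeleton matches the paper's: both reduce to showing that $F''=F/F'$ dies after sheafification, and your filtration pieces $F_{[w,D]}$ are literally the paper's subcomplexes $G^{j}$ (with $j=D-w$), which the paper likewise exhausts to handle $H_0(\widetilde{F''})$. Your bidegree bookkeeping is also right: the $x_i$-components preserve the second twist $-a-b$ while the $y_i$-components lower the strand index $d=a+b$ by $d_i$, which is exactly what makes $F'$ a subcomplex and the $F_{[w,D]}$ subcomplexes of $F''$ (a point the paper uses implicitly when it asserts the $G^j$ are subcomplexes). Where you genuinely diverge is in the treatment of the higher homology: the paper proves $H_i(F'')=0$ for $i>0$ at the level of modules, by reducing modulo $(y_0,\dots,y_n)$ and invoking the homological Nakayama Lemma~\ref{homnak} (mirroring the proof of Theorem~\ref{resdiag}), and only uses the exhaustion by full Koszul strands for the $H_0$ statement after sheafifying; you instead run a single sheaf-level d\'evissage along the strand filtration, noting that each associated graded piece is a full Koszul complex $K(D)\boxtimes S(-D)$ whose homology $k(D)\boxtimes S(-D)$ is irrelevant, and then pass to the colimit. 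The trade-off: the paper's route gives the stronger module-level fact that $F'$ itself is acyclic in positive degrees, whereas your argument only yields exactness of $\widetilde{F'}$ in positive degrees---which is all the theorem asks---while avoiding Lemma~\ref{homnak} entirely and treating all homological degrees uniformly. One small caution: your opening identification ``$F''=\bigoplus_{d\ge w}K(d)\boxtimes S(-d)$'' holds only at the level of underlying terms (the cross-differentials between strands survive in the quotient); your subsequent filtration argument makes clear you know this, but it is worth stating explicitly, since the same looseness appears in the paper's phrase ``direct sum of the first $j+1$ full Koszul strands.''
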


\begin{proof}[Proof of Theorem \ref{finiteres}]
Observe that the terms of the complex $F$ are positively graded. Let $F'' = F / F'$. The quotient $F''/(x_0, \dots, x_n)F''$ is a direct sum of twists of Koszul complexes on the $x_i$'s; it thus follows from Lemma \ref{homnak} and induction that $H_i(F'') = 0$ for $i > 0$. 
Thus, $H_i(F') = 0$ for $i > 0$ as well. As for $i = 0$: let $G^j$ be the subcomplex of $F''$ with $t^{\th}$ term
$$
G^j_t = \bigoplus_{\pi(a) \ge 0} \bigoplus_{\pi(b) = w-\pi(a)}^{w-\pi(a)+j} (\om_E)_{(b; t)} \otimes_k S'(-a-b,a).
$$
This is the sum of the first $j + 1$ full Koszul strands in the $x_i$'s. A direct calculation yields $H_0(\widetilde{G}^j) = 0$ for each $j$. Since $F''$ is the colimit of the $G^j$'s, we get $H_0(\widetilde{F'}) = H_0(\widetilde{F}) = \OO_\Delta$.
\end{proof}

\begin{remark}
\label{hirzres}
We believe that finite rank resolutions of the diagonal constructed as in Theorem~\ref{finiteres} exist more generally: see Conjecture~\ref{c:finite res} below. For instance, this has been verified in the case where $\X$ is a smooth projective toric variety of Picard rank 2 \cite{BSresolution}.
\end{remark}

\section{Recovering a sheaf from its toric Tate resolution}
\label{BMsection}

As discussed above, Eisenbud-Fl\o ystad-Schreyer explain in \cite{EFS} how to recover a sheaf on projective space from its Tate resolution \cite[Theorem 6.1]{EFS}. Our next goal is to generalize this result to toric varieties. In fact, we will prove the following stronger result. 

\begin{thm}
\label{BMdiagram}
Let $\X$ be a projective toric stack with Cox ring $S$, and let $E$ denote the Koszul dual exterior algebra of $S$. Let $\cR$ denote the resolution of the diagonal for $\X$ from Theorem \ref{resdiag} and $\K$ the kernel of the Fourier-Mukai transform in the definition of the toric Tate resolution functor (\S \ref{mainresults}). There exists a functor
$
\U\colon \DM(E) \to \Com(\X)
$
making
$$
\xymatrix{
& \DM(\X_E) \ar[r]^-{R\pi_{2*}}  & \DM(E) \ar[dd]^-{\U} \\
\Coh(\X) \ar[ru]^-{\pi_1^*(-) \otimes \K} \ar[rd]_-{\pi_1^*( - ) \otimes \cR} & &   \\
& \Com(\X \times \X) \ar[r]^-{R\pi_{2*}} & \Com(\X) \\
}
$$
commute up to isomorphism in $\Db(\X)$.
\end{thm}

Recall that the top row of the diagram in Theorem \ref{BMdiagram} gives the Tate resolution, up to homotopy equivalence. The following generalization of \cite[Theorem 6.1]{EFS} is therefore immediate from Theorems \ref{resdiag} and \ref{BMdiagram}: 

\begin{cor}
\label{monad}
Let $\F \in \Coh(\X)$. There is an isomorphism $\U\Tate(\F) \cong \F$ in $\Db(\X)$.
\end{cor}

\begin{proof}[Proof of Theorem \ref{BMdiagram}]
We begin by defining the functor $\U$. For $D \in \DM(E)$, we let $D'$ denote the submodule $\bigoplus_{-a \in V, i \in \Z} D_{(a;i)}$ of $D$, where $V$ is as defined in \S \ref{sec:res diagonal}.   We define $\U\colon \DM(E) \to \Com(\X)$ by sending $D$ to $\LL(D')\in \Com(S)$ and then sheafifying to obtain a complex of $\cO_\X$-modules.

Recall that $\Tate(\F)$ is homotopy equivalent to the differential module $Y$ that one gets by totalizing the bicomplex (\ref{cechbicomplex}) and then applying the equivalence $\Comper(E) \xra{\simeq} \DM(E)$ from Remark \ref{dgmodules}. It follows that there is a homotopy equivalence $\U\Tate(\F) \xra{\simeq} \tU(Y)$. The underlying $E$-module of $Y$ is $\bigoplus_{\l = 0}^{\dim X} \bigoplus_{a \in \Cl(X)}  \cC^{G,\l}_{\cF(a)} \otimes_k \om_E(-a; \l)$, and so $\tU(Y)_j$ is
$$
\bigoplus_{b \in V}  \bigoplus_{\l = 0}^{\dim X} \bigoplus_{a \in \Cl(X)} (\om_E)_{(-a -b, \l + j)}  \otimes_k \cC^{G,\l}_{\cF(a)} \otimes_k \OO(b) .
$$
Reindexing, we may write this sum as 
$$
\bigoplus_{a \in V}  \bigoplus_{\l = 0}^{\dim X} \bigoplus_{b \in \Cl(X)} (\om_E)_{(b, \l + j)}  \otimes_k \cC^{G,\l}_{\cF(-a-b)} \otimes_k \OO(a).
$$
The differential on $\tU(Y)$ sends a section $f \otimes c \otimes s$ of $(\om_E)_{(b, \l + j)} \otimes_k \cC^{G,\l}_{\cF(-a-b)} \otimes_k \OO(a)$ to
$$
\sum_{i = 0}^n (e_i f \otimes c \otimes x_is  + (-1)^{\l + 1} e_if \otimes y_ic \otimes s) - f \otimes \del_C(c) \otimes s,
$$
where $\del_C$ denotes the \v{C}ech differential. We can view $\U(Y)$ as the totalization of the bicomplex with $(p,q)$ entry $\bigoplus_{a \in V} \bigoplus_{b \in \Cl(X)} (\om_E)_{(b; p)} \otimes_k \cC^{G, -q}_{\cF(-a-b)} \otimes_k \OO(a)$, horizontal differential given by sending $f \otimes c \otimes s$ to $\sum_{i = 0}^n (e_i f \otimes c \otimes x_is + (-1)^{q + 1} e_if \otimes y_ic \otimes s)$, and vertical differential induced by $-\del_C$. For instance, notice that the $q = 0$ row is given by
applying $\cC^{G,0}$ to the first tensor factor of every term in $\pi_1^*(\F) \otimes \cR$, where $\cR$ denotes the resolution of the diagonal from Theorem \ref{resdiag}. By the projection formula and flat base change, the complex $\U(Y)$ is a model for the Fourier-Mukai transform $\Phi_{\cR}(\F) := R\pi_{2*}(\pi_1^*(\F) \otimes \cR)$ in $\Db(\X)$. Finally, observe that $\Phi_{\cR}(\F)$ is isomorphic to $\F$ in $\Db(\X)$. 
\end{proof}

\begin{question}
In~\cite[\S7]{EES} and \cite{Schrey-split}, an analogue of the functor $\U$ for products of projective spaces is used to give generalized Horrocks-type criteria for when a vector bundle splits as a sum of line bundles.  Do similar criteria hold for more general toric varieties? In a different direction: can $\U$ be used to study vector bundles on $\X$ in the manner of \cite{ES}?
\end{question}

A Horrocks splitting criterion in this vein for smooth projective toric varieties of Picard rank 2 is obtained in \cite{BSresolution}. 

Applying the functor $\U$ from the proof of Theorem \ref{BMdiagram} to the $E$-module $\om_E(d;i)$ gives the truncated and shifted Koszul complex $\widetilde{K}_d(d)[i]$. When $\X=\PP^n$, $\U( \om_E(d;0))$ is therefore quasi-isomorphic to $\Omega_{\PP^n}^d(d)$, and thus this functor is closely related\footnote{The functor $\U$ can behave differently over general projective toric stacks than over $\PP^n$. For instance, taking $\X$ to be the weighted projective stack $\PP(1,1,2)$, we have $\U(\om_E(1;0)) = (\OO(-1)^{\oplus 2} \xra{\left(\begin{smallmatrix}x_0 & x_1 \end{smallmatrix}\right)} \OO)$,
which has homology that is nonzero in both positions and is not given by a vector bundle in degree 0.} to the functor $\Omega$ defined in~\cite{EFS} and discussed in \S \ref{othertorics}. We now use this observation to make precise the rough intuition from \S \ref{othertorics} that, over $\PP^n$, applying $``\id \times \mathbf{\Omega}"$ to the complex \eqref{Kst} gives Beilinson's resolution of the diagonal. In fact, the following result gives an analogous statement over any projective toric stack; its proof is a straightforward calculation.

\begin{prop}
\label{1U}
Let $\U$ be the functor defined in the proof of Theorem \ref{BMdiagram}. The chain complex with $i^{\th}$ term $\bigoplus_{a \in \Cl(X)} \OO(a) \otimes_k \U(\om_E(-a; 0))_i$ and differential given by $\sum_{i = 0}^n x_i \otimes \U(e_i) - \id \otimes \del_{\U} $ is precisely the resolution of the diagonal from Theorem \ref{resdiag}.
\end{prop}

The idea is that the complex in Proposition \ref{1U} is obtained by applying ``$\id \times \U$" to the noncommutative Fourier-Mukai kernel $\K$ from Section \ref{mainresults}. 


\section{The bounded derived category of a weighted projective stack}
\label{buchweighted}
 We recall that Tate resolutions over $\PP^n$ can be used to interpret $\Db(\PP^n)$ in terms of the exterior algebra, giving a geometric refinement of the classical BGG equivalence $\Db(S) \simeq \Db(E)$. In detail: let $K^{\on{ex}}(E)$ denote the homotopy category of (unbounded) exact complexes of finitely generated free $\Z$-graded $E$-modules. It follows from results of \cite{buchweitz} and \cite{EFS} that the Tate resolution functor implements an equivalence $\Db(\PP^n) \simeq K^{\on{ex}}(E)$. The goal of this section is to use our results on toric Tate resolutions to generalize this equivalence to weighted projective stacks.

Let us first fix some notation. Let $\X$ be a weighted projective stack with Cox ring $S$ and Koszul dual exterior algebra $E$. We will say an object $D \in \DM(E)$ is \defi{locally finite} if $\sum_{i \in \Z} \dim_k D_{(a;i)} < \infty$ for all $a \in \Cl(X)$. We let $K^{\on{ex}}_{\DM}(E)$ denote the homotopy category of exact, free, locally finite differential $E$-modules. We will prove:

\begin{thm}
\label{geobgg}
For a weighted projective stack $\X$, there is an equivalence
$
\Db(\X) \xra{\simeq} K^{\on{ex}}_{\DM}(E)
$
that sends a coherent sheaf concentrated in homological degree 0 to its Tate resolution.
\end{thm}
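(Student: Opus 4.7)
The plan is to factor the desired equivalence as
$$
\Db(\X) \xra{\simeq} \D_{\on{sg}}^{\DM}(E) \xra{\simeq} K^{\on{ex}}_{\DM}(E),
$$
where $\D_{\on{sg}}^{\DM}(E)$ is an appropriate ``singularity category'' of differential $E$-modules, and then verify that the composition agrees with the Tate resolution functor on coherent sheaves concentrated in degree $0$.

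For the first equivalence, I would start from the standard identification $\Db(\X) \simeq \Db(S)/\Db_B(S)$, valid because $\X$ is a weighted projective stack (so, in particular, coherent sheaves on $\X$ correspond to $\Cl(X)$-graded finitely generated $S$-modules modulo those with homology supported on the irrelevant ideal $B$). Combining this with the multigraded BGG equivalence $\Db(S)\simeq \Db_{\DM}(E)$ from Theorem \ref{BGGthm}(c), it suffices to identify the image of $\Db_B(S)$ under BGG with the full subcategory $\Perf_{\DM}(E)\subseteq \Db_{\DM}(E)$ consisting of differential $E$-modules quasi-isomorphic to finitely generated free objects, and then set $\D_{\on{sg}}^{\DM}(E) := \Db_{\DM}(E)/\Perf_{\DM}(E)$. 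The identification $\RR(\Db_B(S)) = \Perf_{\DM}(E)$ should follow from Proposition \ref{tor}(a), since modules supported on $B$ are, up to the standard adjustments, the ones whose BGG image has finite-dimensional $k$-vector space homology, and such objects are perfect as differential modules over the Frobenius algebra $E$.

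For the second equivalence, the plan is to mimic Buchweitz's classical theorem in the differential module setting by sending a differential $E$-module $D$ to a \emph{complete resolution}, namely an exact, free, locally finite differential $E$-module $T$ together with a map $T\to F$ that is an isomorphism in sufficiently high auxiliary degree, where $F\to D$ is a minimal free resolution in the sense of~\cite{BE}. The existence of such a completion reduces to extending, degree-by-degree, the tail of $F$ to a left-unbounded free differential module using the fact that $E$ is graded-Frobenius and has finite injective dimension over itself. This construction gives a functor $\Db_{\DM}(E)\to K^{\on{ex}}_{\DM}(E)$ that kills $\Perf_{\DM}(E)$, and the uniqueness portion of Theorem \ref{EU} shows it is well-defined on the Verdier quotient. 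An inverse can be built by truncating a given exact free differential module $T$ to a minimal free resolution in high degree and taking the corresponding cokernel in $\Db_{\DM}(E)$.

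Finally, to identify the composition with the Tate resolution of \S \ref{mainresults}, I would use Theorem \ref{weightedthm}: given $\F \in \Coh(\X)$, pick an $S$-module $M$ with $\widetilde{M}=\F$ and $H^0_B(M)=0$, so that under BGG and the Verdier quotient the class of $\F$ corresponds to $\RR(M) \in \D_{\on{sg}}^{\DM}(E)$. The complete resolution of $\RR(M)$ is then the mapping cone $\cone(F\xra{\simeq}\RR(M))$ where $F$ is the minimal free resolution of $\RR(M)$, which is exactly the description of $\Tate(\F)$ furnished by Theorem \ref{weightedthm}. The main technical obstacle will be the differential-module Buchweitz equivalence, since the classical arguments (lifting cycles, constructing chain homotopies) rely on a genuine homological grading; the periodicity of the expansion functor (Remark \ref{dgmodules}) together with the minimality results in Appendix \ref{DM} should allow those arguments to be transported into $\DM(E)$, but the book-keeping with respect to the auxiliary grading is where the argument will require the most care.
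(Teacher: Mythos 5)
Your overall architecture is the same as the paper's: factor the functor as $\Db(\X)\xra{\simeq}\on{D}^{\on{sing}}_{\DM}(E)\xra{\simeq}K^{\on{ex}}_{\DM}(E)$ and then invoke Theorem~\ref{weightedthm} to identify the composite with $\Tate$. However, your justification of the first equivalence contains a genuine error. You claim, citing Proposition~\ref{tor}(a), that the complexes supported on $B$ are ``the ones whose BGG image has finite-dimensional homology,'' and that finite-dimensional homology implies perfection over $E$. Neither implication holds: by Corollary~\ref{cor:RM high degree}, \emph{every} bounded complex of finitely generated $S$-modules has $\dim_k H(\RR(C))<\infty$, so finite-dimensionality of homology detects nothing; concretely, $\RR(S)$ is quasi-isomorphic to the residue field $k$ with trivial differential, its homology is one-dimensional, yet $S$ is not torsion and $k$ is \emph{not} perfect as a differential $E$-module (if it were, the class of $\cO_\X$ would die in the quotient). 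Taken at face value your criterion would force $\Perf_{\DM}(E)=\Db_{\DM}(E)$ and collapse the singularity category to zero. The correct argument, as in Proposition~\ref{singeq}, is a thick-subcategory generator comparison: $\Db_{\on{tors}}(S)$ is the thick subcategory of $\Db(S)$ generated by the twists $k(a)$, $\Perf_{\DM}(E)$ is the thick subcategory generated by the $E(a;0)$, and $\RR(k(a))=E(a;0)$, so the BGG equivalence of Theorem~\ref{BGGthm}(c) matches them; combined with $\Db(\X)\simeq\Db(S)/\Db_{\on{tors}}(S)$ (Orlov plus the Serre-quotient description of $\Coh(\X)$), this gives $\Db(\X)\simeq\on{D}^{\on{sing}}_{\DM}(E)$.

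For the Buchweitz-type equivalence you only sketch the construction, and the step you flag as delicate (splicing a complete resolution degree by degree without a homological grading) is exactly what the paper avoids. The paper's functor $\bC$ sends $D$ to $\cone\bigl(F\to G^\vee\bigr)$, where $F$ is a locally finite free resolution of $D$ and $G$ one of $D^\vee=\underline{\Hom}_E(D,E)$; since $(-)^\vee$ is exact and every finitely generated $E$-module is maximal Cohen--Macaulay, no inductive extension of the tail is needed, and uniqueness issues are handled by Theorem~\ref{EU}. The inverse is the truncation $\sigma_{\le 0}$ taken with respect to the $\Cl(X)$-grading (Proposition~\ref{othereq}); note that agreement of the complete resolution with the minimal free resolution happens in the $\Cl(X)$-degrees, not in ``sufficiently high auxiliary degree'' as you write---the auxiliary degrees occurring in these objects are bounded, so that phrasing does not make sense. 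If you repair the first equivalence via the generator argument and replace your splicing construction with the duality trick (or carry out your splicing carefully against the $\Cl(X)$-grading), your outline becomes the paper's proof.
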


\begin{remark}
The results in this section all generalize in an evident way to generalized weighted projective stacks; we omit the details. \end{remark}

Suppose $\X$ is a weighted projective stack. Let $\Perf_{\DM}(E) \subseteq \Db_{\DM}(E)$ be the smallest triangulated subcategory of $\Db_{\DM}(E)$ containing all summands of finitely generated free flag differential $E$-modules, and define the \defi{singularity category of differential $E$-modules} to be the Verdier quotient
$
\on{D}^{\on{sing}}_{\DM}(E) := \Db_{\DM}(E) / \Perf_{\DM}(E).
$
We will prove Theorem \ref{geobgg} by constructing a chain of equivalences
\begin{equation}
\label{chain}
\Db(\X) \xra{\simeq} \on{D}^{\on{sing}}_{\DM}(E)  \xra{\simeq} K^{\on{ex}}_{\DM}(E).
\end{equation}
The second equivalence is an analogue of a theorem of Buchweitz (\cite[Theorem 4.4.1]{buchweitz}).

We start with the first link in the chain. We say a graded $S$-module is \defi{torsion} if it is annihilated by a power of the maximal ideal $\langle x_0, \dots, x_n \rangle$. Denote by $\Db_{\on{tors}}(S)$ the subcategory of $\Db(S)$ given by complexes with torsion homology.

\begin{prop}
\label{singeq}
The equivalence $\on{D}^{\on{b}}_{\DM}(E) \simeq  \Db(S)$ of Corollary \ref{cor:bounded} induces equivalences
$$
 \Db_{\on{tors}}(S) \simeq \Perf_{\DM}(E)
\qquad
and 
\qquad
\Db(\X) \xra{\simeq} \on{D}^{\on{sing}}_{\DM}(E) .
$$
\end{prop}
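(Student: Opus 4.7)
The plan is to deduce both equivalences from the BGG derived equivalence $\Db_{\DM}(E) \simeq \Db(S)$ of Theorem~\ref{BGGthm}(c), together with a Serre-style description of $\Db(\X)$ as the Verdier quotient $\Db(S)/\Db_{\on{tors}}(S)$. The crux is to show that the BGG equivalence matches $\Perf_{\DM}(E)$ with $\Db_{\on{tors}}(S)$ as subcategories; granted this first equivalence, the second follows by passing to Verdier quotients.

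For the first equivalence, I would verify both inclusions separately. For the forward direction, computing $\LL(\om_E(-a;i))$ yields, up to shift, the Koszul complex on $x_0,\dots,x_n$ twisted by $-a$, which is a bounded free resolution of the residue field $k(-a)$; hence its image in $\Db(S)$ lies in $\Db_{\on{tors}}(S)$. Since $\Db_{\on{tors}}(S)$ is triangulated and closed under summands, and every object of $\Perf_{\DM}(E)$ is built from summands of iterated extensions of the $\om_E(-a;i)$ (that is the content of being a finitely generated free flag), $\LL$ sends $\Perf_{\DM}(E)$ into $\Db_{\on{tors}}(S)$. For the reverse inclusion, any object of $\Db_{\on{tors}}(S)$ is quasi-isomorphic to a bounded complex of finitely generated torsion $S$-modules, each admitting a finite composition series with subquotients of the form $k(-a)$. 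A direct unwinding of the definition of $\RR$ gives $\RR(k(-a)) \cong \om_E(-a;0)$ as differential $E$-modules; hence $\RR$ realizes every object of $\Db_{\on{tors}}(S)$ as an iterated extension of (shifts of) free rank-one differential $E$-modules, which lands in $\Perf_{\DM}(E)$.

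For the second equivalence, I would invoke Serre's theorem for the quotient stack $\X$. Because $\X$ is a weighted projective stack, its irrelevant ideal $B$ equals the graded maximal ideal $\langle x_0,\dots,x_n\rangle$, so the $B$-torsion condition coincides with the ordinary torsion condition used in defining $\Db_{\on{tors}}(S)$; Serre's theorem therefore gives an equivalence $\Db(\X) \simeq \Db(S)/\Db_{\on{tors}}(S)$ of triangulated categories. Combining this with the first equivalence produces the desired $\Db(\X) \simeq \Db_{\DM}(E)/\Perf_{\DM}(E) = \on{D}^{\on{sing}}_{\DM}(E)$.

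The main obstacle is verifying on the nose that the image of $\Perf_{\DM}(E)$ under the BGG equivalence is precisely $\Db_{\on{tors}}(S)$, not merely contained in it. This requires checking that composition-series arguments for finitely generated torsion modules reach all of $\Db_{\on{tors}}(S)$ inside $\Db_{\DM}(E)$, and that the free flag condition in the definition of $\Perf_{\DM}(E)$ really captures all iterated extensions of shifts of $\om_E(-a;i)$. A secondary technical point is confirming that the Serre-type equivalence $\Db(\X)\simeq \Db(S)/\Db_{\on{tors}}(S)$ holds in the stacky setting, which follows from $\X$ being a Deligne--Mumford stack with good moduli scheme, as recalled in \S\ref{toricbackground}.
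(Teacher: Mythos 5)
Your proposal is correct and follows essentially the same route as the paper: both arguments match $\Perf_{\DM}(E)$ with $\Db_{\on{tors}}(S)$ under the BGG equivalence by identifying each as the thick subcategory generated by twists of the rank-one free modules, respectively of the residue field (using $\RR(k(a))\cong$ a twist of $E$ and the fact that free flags are iterated extensions of rank-one frees), and then obtain the second equivalence by passing to Verdier quotients via the Serre-quotient description $\Db(\X)\simeq \Db(S)/\Db_{\on{tors}}(S)$, which the paper justifies by citing Orlov's result on $\Coh(\X)$ as an abelian quotient. Your explicit two-inclusion check (via $\LL$ on generators and dévissage for $\RR$) is just a spelled-out version of the paper's thick-subcategory assertion, so there is no substantive difference.
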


\begin{proof}[Proof of Proposition~\ref{singeq}]
We recall that a triangulated subcategory $\mathcal{T'}$ of a triangulated category $\mathcal{T}$ is called \defi{thick} if $\mathcal{T'}$ is closed under taking summands. Since a finitely generated free flag is precisely the same thing as a finite iterated extension of rank 1 free modules, one concludes that the subcategory $\Perf_{\DM}(E) \subseteq \Db_{\DM}(E)$ is the thick subcategory generated by $E(a; 0)$ for all $a \in \Z$. The first equivalence follows immediately, since $\Db_{\on{tors}}(S)$ is the thick subcategory of $\Db(S)$ generated by $k(a)$ for all $a \in \Z$, and $\RR(k(a)) = E(a; 0)$. 

As for the second equivalence, it suffices to observe that $\Db(\X)$ is equivalent to the Verdier quotient $\Db(S) / \Db_{\on{tors}}(S)$. To see this, apply  \cite[Proposition 2.17]{orlov} to see that $\on{Coh}(\X)$ is equivalent to the abelian quotient of the category of finitely generated graded $S$-modules by the Serre subcategory given by torsion modules. Then recall that, for any abelian category $\mathcal{A}$ and Serre subcategory $\mathcal{S}$ of $\mathcal{A}$, there is an equivalence $\Db(\mathcal{A}) / \Db_{\mathcal{S}}(\mathcal{A}) \simeq \Db(\mathcal{A}/\mathcal{S})$, where 
$
\Db_{\mathcal{S}}(\mathcal{A}) = \{X \in \Db(\mathcal{A}) \text{ : } H_n(X) \in \mathcal{S} \text{ for all } n \in \Z\}.
$ 
\end{proof}

Any object in $\DM(E)$ with finitely generated homology admits a locally finite free resolution, by Theorem \ref{EU}. Let $D$ be a finitely generated object in $\DM(E)$, $F$ a locally finite free resolution of $D$, and $G$ a locally finite free resolution of $D^\vee  := \underline{\Hom}_E(D, E)$. Since dualizing over $E$ is exact, and every finitely generated module over $E$ is maximal Cohen-Macaulay, we have an induced quasi-isomorphism
$
F \xra{\simeq} D \cong (D^\vee)^\vee \xra{\simeq} G^\vee.
$
Denote the mapping cone of this quasi-isomorphism by $\bC(D)$. The object $\bC(D)$ is contained in $K^{\on{ex}}_{\DM}(E)$, and it is well-defined up to the choices of $F$ and $G$. If $f : D \to D'$ is a quasi-isomorphism in $\DM(E)$, there is an induced homotopy equivalence $\bC(D) \xra{\simeq} \bC(D')$, and so there is an induced functor $\bC\colon \Db_{\DM}(E) \to K^{\on{ex}}_{\DM}(E)$ (recall from Proposition \ref{fg} that every object in $\Db_{\DM}(E)$ is isomorphic to a finitely generated differential module). Moreover, if $D \in \Perf_{\DM}(E)$, then $D$ and $D^\vee$ are free resolutions of themselves, so $C(D) = \cone(D \xra{\cong} (D^\vee)^\vee)$, which is contractible. It follows that we have an induced functor
$
\bC\colon \on{D}^{\on{sing}}_{\DM}(E) \to K^{\on{ex}}_{\DM}(E).
$
The definition of the map $\bC$ closely resembles Buchweitz's construction of \defi{complete resolutions} in \cite{buchweitz}; hence the notation ``$\bC$" for this functor.

Going the other direction, let $D \in K^{\on{ex}}_{\DM}(E)$, and write $D = \bigoplus_{(a; i) \in \Z \times \Z} E(a; i)^{\oplus r_{a,i}}$. For all $a \in \Z$, write $D_a = \bigoplus_{i \in \Z} E(a; i)^{\oplus r_{a,i}}$. Let $D' = \bigoplus_{a \le 0} D_a$ and $D'' = \bigoplus_{a > 0} D_a$. Decomposing $D$ as $D' \oplus D''$, we can write the differential $\del$ on $D$ as a matrix of the form $\begin{pmatrix} \del' & \alpha \\ 0 & \beta \end{pmatrix}$. Notice that $\alpha$ exhibits $(D''(0;1), -\beta)$ as a free resolution of $(D', \del')$, and $D = \cone(\alpha)$. Observe also that, since $\dim_k \bigoplus_{i \in \Z} D_{(a; i)} < \infty$ for all $a \in \Z$, the differential module $D'$ has finitely generated homology. We thus have a functor
$
\tau_{\le 0} : K^{\on{ex}}_{\DM}(E) \to \on{D}^{\on{sing}}_{\DM}(E)
$
given by $D \mapsto D'$.

\begin{prop}
\label{othereq}
The functors $ \bC\colon \on{D}^{\on{sing}}_{\DM}(E) \rightleftarrows  K^{\on{ex}}_{\DM}(E)  :\tau_{\le 0}  $ are inverse equivalences.
\end{prop}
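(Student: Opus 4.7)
The plan is to adapt Buchweitz's classical theorem identifying the singularity category of a Gorenstein ring with the homotopy category of totally acyclic complexes of projectives \cite{buchweitz} to the differential-module setting. The key algebraic inputs are (i) that $E$ is Frobenius, so $(-)^\vee = \underline{\Hom}_E(-, E)$ sends finitely generated free modules to finitely generated free modules and is exact on finitely generated modules, and (ii) the existence of locally finite minimal free resolutions of finitely generated differential $E$-modules, furnished by Theorem \ref{EU}.

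I would begin by verifying that both functors are well-defined. For $\bC$, given a finitely generated representative $D \in \Db_{\DM}(E)$, I would choose minimal locally finite free resolutions $F \xra{\simeq} D$ and $G \xra{\simeq} D^\vee$. Then $G^\vee$ is free and locally finite, the natural map $F \to G^\vee$ lifting the canonical quasi-isomorphism $D \xra{\simeq} (D^\vee)^\vee$ is well-defined up to homotopy, and its cone is free, locally finite, and exact, so $\bC(D) \in K^{\on{ex}}_{\DM}(E)$. Uniqueness of minimal free resolutions makes $\bC$ independent of the choices, and $\bC(\Perf_{\DM}(E)) = 0$ is immediate, since if $D$ is already free then $F = D = G^\vee$ and the cone is contractible. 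For $\sigma_{\le 0}$, the point to verify is that $D' \in \Db_{\DM}(E)$; exactness of $D$ makes $\alpha \colon D''(0;1) \to D'$ a quasi-isomorphism, and, together with local finiteness of $D$, this shows that $D'$ is quasi-isomorphic to a finitely generated differential module, placing it in $\Db_{\DM}(E)$ via Proposition \ref{fg}.

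Next I would verify the two compositions. For $\bC \circ \sigma_{\le 0} \simeq \id$: given $D \in K^{\on{ex}}_{\DM}(E)$, the already-constructed $\alpha$ exhibits $D''(0;1)$ as a locally finite free resolution of $D' = \sigma_{\le 0}(D)$; dualizing, $\alpha^\vee \colon (D')^\vee \to D''(0;1)^\vee$ provides a free coresolution of $(D')^\vee$, so one may take $G^\vee = D''(0;1)$ in the construction of $\bC(D')$. The resulting cone is precisely $D$ up to homotopy, since the natural lift $F \to G^\vee$ is identified with the identity of $D''(0;1)$ and reassembles into the original upper-triangular differential on $D$. For $\sigma_{\le 0} \circ \bC \simeq \id$, the strategy is to choose the minimal free resolutions $F$ and $G$ so that their generators are separated in the first grading---$F$ in degrees $a \le 0$ and $G^\vee$ in degrees $a > 0$---which forces $\sigma_{\le 0}(\bC(D)) = F \simeq D$ in $\on{D}^{\on{sing}}_{\DM}(E)$. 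This separation is achievable precisely because every exterior variable $e_i$ of $E$ has strictly negative first-degree component (the weights of a weighted projective stack are positive), so minimal free resolutions propagate generators only in the decreasing direction of the first grading.

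The hard part will be making this grading-separation argument rigorous and dealing with the boundary behavior at $a = 0$, where summands of $F$ and $G^\vee$ may overlap; the fix is to check that any such overlap contributes only perfect summands that vanish in $\on{D}^{\on{sing}}_{\DM}(E)$. It is precisely this separation argument that would fail for a more general toric stack (where some $e_i$ could have zero or mixed-sign first-degree component), which is why Theorem \ref{geobgg} is stated only for weighted projective stacks; compare Remark \ref{extend2}.
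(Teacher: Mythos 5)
Your overall plan is the same as the paper's proof: use the fact that $\alpha\colon (D''(0;1),-\beta)\to (D',\del')$ is a free resolution, exploit duality over the Frobenius algebra $E$ together with uniqueness of free resolutions, and kill finitely generated free pieces in $\on{D}^{\on{sing}}_{\DM}(E)$. But two of your steps do not work as written. In the verification of $\bC\circ\sigma_{\le 0}\simeq\id$ you have the roles of $F$ and $G^\vee$ reversed: in $\bC(\sigma_{\le 0}D)=\cone(F\to G^\vee)$, $F$ is a free resolution of the object and $G$ is a free resolution of its dual, so it is $F$ that should be identified (up to homotopy) with $D''(0;1)$ via $\alpha$, while $G^\vee$ must come out homotopy equivalent to $D'$ itself; the paper gets the latter by noting that $(D')^\vee\to Y^\vee$ is a free resolution, where $Y$ is a finitely generated differential submodule quasi-isomorphic to $D'$, so any free resolution $G$ of $Y^\vee$ is homotopy equivalent to $(D')^\vee$ and hence $G^\vee\simeq D'$. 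Taking ``$G^\vee=D''(0;1)$'' with comparison map ``the identity of $D''(0;1)$'' would make the cone contractible, not $D$; the comparison map must be identified with $\alpha$, and then the cone is $D$. Relatedly, you never pass to a finitely generated representative, but $\bC$ is defined on finitely generated representatives and the reflexivity $Y\cong(Y^\vee)^\vee$ is invoked for finitely generated modules, so the construction of $Y$ is not optional. Its existence is also where the well-definedness of $\sigma_{\le 0}$ actually comes from: ``exactness plus local finiteness'' is not yet an argument. The point is that $H(D')$, which is isomorphic up to an auxiliary shift to $H(D'',\beta)$, is confined to the finite band of internal degrees in which free modules generated in twists $\le 0$ and $>0$ can both be nonzero (because $E$ is finite-dimensional with strictly negative internal degrees on the $e_i$), hence is finite-dimensional by local finiteness; $Y$ is then generated by the finitely many graded pieces of $D'$ in those degrees.

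Second, for $\sigma_{\le 0}\circ\bC\simeq\id$ the exact separation you propose cannot be arranged: where the generators of $F$ and of $G^\vee$ sit depends on where the generators and socle of $D$ sit, so the two halves of $\bC(D)$ overlap in a finite band whose width is not bounded independently of $D$ and is certainly not confined to $a=0$; moreover the overlapping part is not a direct summand of both sides, so ``overlapping summands are perfect'' cannot be applied literally. The correct (and the paper's) argument is asymptotic: choose $a\ll 0$ so that $\bC(D)_b\subseteq F$ for all $b\le a$, set $N=\bigoplus_{b\le a}\bC(D)_b$, and observe that the inclusions $N\into\sigma_{\le 0}(\bC(D))$ and $N\into F$ both have cokernels that are finitely generated free differential modules (finitely many twists, locally finite ranks), hence lie in $\Perf_{\DM}(E)$; so both inclusions become isomorphisms in $\on{D}^{\on{sing}}_{\DM}(E)$, giving $\sigma_{\le 0}(\bC(D))\cong F\cong D$ there. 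What you label ``the hard part'' is thus the actual content of this direction, and the sketch you give of it (separation with overlap only at $a=0$) points in a direction that does not close; it needs to be replaced by this zig-zag through a common deep tail.
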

\begin{proof}

Let $D \in \on{D}^{\on{sing}}_{\DM}(E)$ be a finitely generated differential module. Let $F$ be a locally finite free resolution of $D$ and $G$ a locally finite free resolution of $D^\vee$, so that $\bC(D) = F(0;-1) \oplus G^\vee$. Write $\bC(D) = \bigoplus_{(a; i) \in \Z \times \Z} E(a; i)^{\oplus r_{a,i}}$ and $\bC(D)_a =  \bigoplus_{i \in \Z} E(a; i)^{\oplus r_{a,i}}$, and choose $a \ll 0$ such that $\bC(D)_b \subseteq F$ for all $b \le a$. Let $N = \bigoplus_{b \le a} \bC(D)_b$. The natural maps
$$
N \to (\tau_{\le 0} \circ \bC)(D) \quad \text{and} \quad N \to F
$$
are both isomorphisms in $\on{D}^{\on{sing}}_{\DM}(E)$; it follows that $(\tau_{\le 0} \circ \bC)(D) \cong D$ in $\on{D}^{\on{sing}}_{\DM}(E)$.

On the other hand, let $D \in K^{\on{ex}}_{\DM}(E)$, and let $D'$ and $D''$ be as in the above construction of the functor $\tau_{\le 0}$. The differential module $D'$ has finitely generated homology: let $Y$ be the $E$-submodule of $D'$ generated by elements of $\Cl(X)$-degree $a$ such that $H(D')_{(a; i)} \ne 0$ for some $i \in \Z$. The object $Y$ is finitely generated, and it is a differential submodule of $D'$, because the differential is degree 0 with respect to the $\Cl(X)$-degree. Moreover, the inclusion $Y \into D'$ is a quasi-isomorphism; in fact, the dual $(D')^\vee \to Y^\vee$ of this inclusion is a free resolution of $Y^\vee$. Choose locally finite free resolutions $F$ of $Y$ and $G$ of $Y^\vee$, so that $(\bC \circ \tau_{\le 0})(D)$ is isomorphic to $\cone(F \xra{\simeq} G^\vee)$. Since $G^\vee$ is homotopy equivalent to $D'$ and $F$ is homotopy equivalent to $(D''(0;1),-\b)$, $D$ is homotopy equivalent to $(\bC \circ \tau_{\le 0})(D)$.
\end{proof}

\begin{proof}[Proof of Theorem \ref{geobgg}]
Our equivalence is given by the composition
$
\Db(\X) \xra{\RR} \on{D}^{\on{sing}}_{\DM}(E) \xra{\bC} K^{\on{ex}}_{\DM}(E) 
$
of the equivalences in Propositions \ref{singeq} and \ref{othereq}. By Theorem \ref{weightedthm}, applying this equivalence to a sheaf $\F$ concentrated in degree 0 gives the Tate resolution of $\F$. 
\end{proof}

\section{Future directions}\label{sec:applications}

A running theme in this work, and elsewhere~\cite{BES,BHS, BKLY,costa-miro-roig,EE, EES,Ha1,Ha2,hering, HSS, MS,  MS2,yang-monomial}, is that the multigradings on the Cox rings of toric varieties make many homological constructions more subtle than their well-known counterparts over $\PP^n$. We propose a few questions in this vein.  

\begin{conj}\label{c:intrinsic}
The properties in Theorem \ref{thm:toric exact} characterize $\Tate(\cF)$ up to isomorphism of differential modules.
\end{conj}

In general, we have only shown that $\Tate(\cF)$ is well-defined up to homotopy, but Theorems~\ref{weightedthm} and \ref{comparison} imply that Conjecture \ref{c:intrinsic} holds over generalized weighted projective stacks and products of projective spaces. See the paragraph beneath the proof of Theorem \ref{comparison} for a discussion of the difficulty of extending these results to general projective toric stacks.

\begin{conj}\label{c:finite res}
The resolution $F$ in Theorem \ref{resdiag} always admits a finite rank subcomplex whose sheafification is a resolution of the diagonal. In fact, we can always find such a subcomplex whose length is at most $\dim(X)$.
\end{conj}
Conjecture \ref{c:finite res} would resolve a question of Berkesch--Erman--Smith on the minimal length of a virtual resolution of a module (see~\cite[Question~6.5]{BES}), and it would imply a large swath of new cases of a Conjecture of Orlov concerning the Rouquier dimension of the bounded derived category of a quasi-projective variety \cite[Conjecture 10]{orlovremarks}. Conjecture~\ref{c:finite res} has been proven by the first author and Sayrafi in the case of smooth projective toric varieties of Picard rank 2 \cite{BSresolution}.

One application of the Tate resolutions from~\cite{EFS} was to the development of an efficient algorithm for computing sheaf cohomology on $\PP^n$; see also~\cite{decker-eisenbud}.  
\begin{question}\label{q:algorithm}
Can one use toric Tate resolutions to develop an exterior algebra algorithm for computing sheaf cohomology on any projective toric stack? 
\end{question}
In followup work, we will show that this question has a positive answer for weighted projective stacks, using the theory of minimal free resolutions of differential modules from~\cite{BE}. 
Just as with the problem of characterizing the toric Tate resolution up to isomorphism (Conjecture~\ref{c:intrinsic}), we expect that extending such an algorithm to a more general toric variety will rely heavily upon the exactness properties in Theorem \ref{thm:toric exact}(4).
%
%

%
%

\begin{question}\label{q:derived}
Can one generalize Theorem \ref{geobgg} above by relating the bounded derived category of a projective toric stack $\X$ to an appropriate homotopy category of exact differential $E$-modules?
\end{question}

A positive answer to Question \ref{q:derived} could provide new connections to the study of derived categories of toric varieties and stacks, e.g.~\cite{bdf,bdfkk,bfk,BH,cmr,craw-smith,dlm,efimov,k1,k2,k3,orlov}. For instance, an exceptional object/collection on the $E$-module side would immediately yield the same for $\Db(\X)$. 

A fundamental challenge underlying Question \ref{q:derived} is determining the correct analogue of $K^{\on{ex}}_{\DM}(E)$ in the case of an arbitrary projective toric stack $\X$. 
To explain the difficulty: notice that passing from $\Db(S)$ to $\Db(\X)$ requires one to take a quotient not just by complexes supported in the maximal ideal $\langle x_0, \dots, x_n \rangle$, but complexes supported in the irrelevant ideal. The counterpart of $\Db(\X)$ will therefore typically be strictly smaller than $K^{\on{ex}}_{\DM}(E)$. One can see this from another point of view: by Theorem \ref{thm:toric exact}(3), the Tate resolution satisfies more subtle exactness properties than a typical object in $K^{\on{ex}}_{\DM}(E)$, and so we should not expect every object in $K^{\on{ex}}_{\DM}(E)$ to correspond to an object in $\Db(\X)$ via the Tate resolution functor.

\appendix

\section{Positive multigradings}
\label{sec:positivegrading}
Let $A$ be an abelian group and $R$ an $A$-graded ring with $R_0$ a field. 
\begin{defn}
\label{posring}
A group homomorphism $\theta\colon A \to \Z$ is called a \defi{positive $A$-grading} on $R$ if, for all $x\in R\setminus\{0\}$:
\begin{enumerate}
\item $\theta(\deg(x)) \ge 0$, and
\item $\theta(\deg(x)) = 0$ if and only if $x$ is a unit.
\end{enumerate}
The ring $R$ is called \defi{positively $A$-graded} if a positive $A$-grading exists. The above properties ensure that $R$ is, via $\theta$, a nonnegatively $\Z$-graded ring such that the ideal in $R$ generated by elements of positive degree is maximal.
\end{defn}

%
\begin{example}
\label{cox}
Let $X$ be a projective toric variety with $\Cl(X)$-graded Cox ring $S$. We claim that $S$ is positively $\Cl(X)$-graded. Indeed, letting $H$ be an ample divisor on $X$, the map
$
\theta \colon \Cl(X) \to \Z
$
given by intersecting with $H^{\dim X - 1}$ is a positive $\Cl(X)$-grading. The projectivity assumption cannot be removed; see e.g. \cite[Example 5.2.3]{CLS}.
\end{example}

\begin{defn}
\label{posmod}
Let $M$ be an $A$-graded $R$-module. A homomorphism $\theta : A \to \Z$ determines a $\Z$-grading on $M$ by setting 
$M_i = \bigoplus_{\theta(a) = i} M_a$. We say $M$ is \defi{positively $A$-graded} if there is a positive $A$-grading on $R$ such that this associated $\Z$-grading of $M$ is bounded below. 
\end{defn}


We record the following homological variant of Nakayama's Lemma:

\begin{lemma}
\label{homnak}
Let $R$ be positively $A$-graded, and let $C$ be a complex of free $R$-modules such that $H_i(C)$ is positively $A$-graded. Let $x \in R$ be  homogeneous of nonzero degree. If $H_i(C/xC) = 0$, then $H_i(C) = 0$.
\end{lemma}
%

\section{Further background on differential $E$-modules}
\label{DM}

As in \S \ref{sec:BGGsection}, let $S = k[x_0, \dots, x_n]$ be positively graded by an abelian group $A$ and let $E = \Lambda_k(e_0, \dots, e_n)$ be equipped with the $A \oplus \Z$-grading given by $\deg(e_i) = (-\deg(x_i); -1)$.

\subsection{Resolutions of differential $E$-modules}
\label{freeres}

We recall from \cite{BE} the notion of a free resolution of a differential module. 

\begin{defn}[\cite{BE} Section 1]
\label{freeres}
A differential $E$-module $F$ is a \defi{free flag} if $F$ is a free module that may be equipped with a decomposition $\bigoplus_{i \ge 0} F_i$ such that $\del_F(F_i) \subseteq \bigoplus_{j < i} F_j$. Given $D \in \DM(E)$, a \defi{free flag resolution} of $D$ is a quasi-isomorphism $F \xra{\simeq} D$, where $F$ is a free flag. A \defi{free resolution} of $D$ is a quasi-isomorphism $F \xra{\simeq} D$ that factors as $F \to \widetilde{F} \to D$, where $\widetilde{F} \to D$ is a free flag resolution, and $F \to \widetilde{F}$ is a split injection. We say a free resolution $F \xra{\simeq} D$ is \defi{minimal} if $\del_F(F) \subseteq \m F$, where $\m = \langle e_0, \dots, e_n \rangle \subseteq E$.

Reversing arrows, one can define injective resolutions. A differential $E$-module $I$ is an \defi{injective coflag} if $I$ is an injective module that may be equipped with a decomposition $\bigoplus_{i \le 0} I_i$ such that $\del_I(I_i) \subseteq \bigoplus_{ j < i} I_j$ (note that graded injective, projective, and free modules coincide over $E$). One defines an \defi{injective coflag resolution} in the evident way, and an \defi{injective resolution} of a differential module $D$ is a quasi-isomorphism $D \xra{\simeq} I$ that factors as $D \to \widetilde{I} \to I$, where $D \to \widetilde{I}$ is an injective coflag resolution, and $\widetilde{I} \to I$ is a split surjection.
\end{defn}

The following fact plays a key role in the proof of Theorem \ref{weightedthm} above:

\begin{thm}
\label{EU}
Any differential $E$-module $D$ whose homology is finitely generated admits a minimal free resolution $F \xra{\simeq} D$, and this minimal free resolution is unique up to isomorphism of differential modules. Moreover, we have $ \sum_{i \in \Z} \dim_k F_{(a, i)} <\infty$ for all $a \in \Cl(X)$, and $F$ is positively $A$-graded, in the sense of Definition \ref{posmod}.
\end{thm}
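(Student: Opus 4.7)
The plan is to leverage the positive grading on $E$ to reduce existence to \cite[Theorem 1.3(b)]{BE}, then refine the output to be $A\oplus\Z$-graded with the claimed local finiteness, and finally establish uniqueness by a Nakayama-style argument.

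First, I would choose a positive $A$-grading $\theta\colon A\to\Z$ on $E$, which exists since $S$ (and hence $E$) is positively $A$-graded. This equips $E$ with a nonnegative $\Z$-grading such that the differential on $D$ is homogeneous of degree $0$. Because $H(D)$ is finitely generated, \cite[Theorem 1.3(b)]{BE} provides a free resolution $F\xra{\simeq}D$ with $\dim_k F_d<\infty$ for all $d\in\Z$ and $F_d=0$ for $d\ll 0$; minimality (i.e. $\partial_F(F)\subseteq\m F$) can be arranged by peeling off split acyclic summands in the standard way. This same step establishes the positive $A$-gradedness claim, since the $\theta$-induced $\Z$-grading on $F$ is bounded below by construction.

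Next, I would observe that the killing-cycles construction underlying \cite[Theorem 1.3(b)]{BE} is $A$-equivariant, because the differential on $D$ respects the $A$-grading; consequently $F$ inherits a natural $A\oplus\Z$-grading compatible with its differential. For the local finiteness statement, fix $a\in A$. A summand $E(b;j)$ of $F$ contributes to $F_{(a,*)}$ only when $E_{(a-b,*)}\neq 0$, and by positivity of $\theta$ this forces $\theta(b)\leq\theta(a)$. Combining this with the facts that $F$ has only finitely many generators in each $\theta$-degree and that the $\theta$-grading on $F$ is bounded below yields that only finitely many summands of $F$ contribute to $A$-degree $a$; since $E$ is finite-dimensional over $k$, we conclude $\sum_i\dim_k F_{(a,i)}<\infty$.

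Finally, for uniqueness up to isomorphism, given two minimal free resolutions $F,F'\xra{\simeq} D$, the free-flag structure and the acyclicity of the mapping cones yield lifts $\phi\colon F\to F'$ and $\psi\colon F'\to F$ such that $\psi\phi$ and $\phi\psi$ are homotopic to the respective identities. Minimality implies that the induced maps $\overline{\psi\phi}\colon F/\m F\to F/\m F$ and $\overline{\phi\psi}\colon F'/\m F'\to F'/\m F'$ are the identities on the associated graded $k$-vector spaces; a graded $E$-linear endomorphism of a free module that reduces to the identity modulo $\m$ is automatically an automorphism (by Nakayama applied in each $\theta$-degree, using that $F$ is bounded below and locally finite), so $\psi\phi$ and $\phi\psi$ are isomorphisms, hence so are $\phi$ and $\psi$.

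The hardest step will be the uniqueness argument. In the classical case of complexes of modules one inducts on homological degree to promote a homotopy equivalence between minimal resolutions to an isomorphism; for differential modules there is no homological grading, so the Nakayama-type upgrade from ``homotopy equivalence'' to ``isomorphism'' must be done directly, exploiting the bounded-below, locally finite $\theta$-grading on $F$ and $F'$ together with the free flag structure.
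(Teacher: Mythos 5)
Your proposal takes essentially the same route as the paper's proof: both reduce to a nonnegative $\Z$-grading by choosing a positive $A$-grading on $E$ (under which the differential becomes degree $0$) and then adapt the machinery of \cite{BE}, the paper citing the construction of \cite[Remark 5.7]{BE} for existence and the proof of \cite[Theorem 4.2(b)]{BE} for uniqueness, the latter being exactly the comparison-map-plus-Nakayama argument you write out. The one point to be careful about is that ``peeling off split acyclic summands in the standard way'' is precisely the step that is not standard for differential modules (there is no homological grading to induct on); it does go through here because of the bounded-below, locally finite $\theta$-grading, but this convergence issue is the substance of the cited arguments in \cite{BE} rather than a routine reduction.
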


\begin{proof}
By \cite[Theorem 1.2]{BE}, minimal free resolutions exist and are unique for any differential module with finitely generated homology and degree 0 differential over a (possibly noncommutative) $\Z$-graded local ring $R$ such that $R_0$ is a field. Our result does not immediately follow from this theorem, because $E$ is $A \oplus \Z$-graded, not $\Z$-graded, and the differential on $D$ has degree $(0, -1)$, not $0$. However, a slight modification of the arguments in \cite{BE} gives the result we want. In detail: the positive $A$-grading on $S$ induces a positive $A$-grading on $E$ in the evident way. We can use this induced $\Z$-grading to construct a minimal free resolution of $D$ exactly as in \cite[Remark 5.7]{BE}; this proves existence. Uniqueness follows from an argument identical to the proof of the uniqueness part of \cite[Theorem 4.2(b)]{BE}.
\end{proof}

\subsection{Tensor product and internal $\Hom$ for differential $E$-modules}
\label{tensorhom}

We can use the auxiliary $\Z$-grading on $E$ to define a tensor product, internal Hom, $\Tor$, and $\Ext$ for differential $E$-modules; these coincide with the usual notions for dg-modules via the first equivalence discussed in Remark \ref{dgmodules}. Letting $D, D' \in \DM(E)$, we define the tensor product $D \otimes_E^{\DM} D'$ to be the differential module with underlying module $D \otimes_E D'$ and differential
$$
d \otimes d' \mapsto \del_D(d) \otimes d' + (-1)^{\aux(d)}d \otimes \del_{D'}(d'),
$$
where, as stated in Conventions \ref{leftconvention}, $\aux( - )$ denotes the auxiliary degree. Recall also from Conventions \ref{leftconvention} that any right $E$-module may be considered as a left $E$-module in a canonical way, so the tensor product $D \otimes_E D'$ makes sense. The internal Hom object $\underline{\Hom}_E^{\DM}(D, D')$ is defined to be the differential module with underlying $E$-module $\underline{\Hom}_E(D, D')$ and differential 
$
f \mapsto \del_{D'} \circ f  - (-1)^{\on{aux}(f)} f \circ \del_D.
$
Let $F$ be a free resolution of $D$ (Definition \ref{freeres}). We define
$$
\Tor^E_{\DM}(D, D') = H(F \otimes^{\DM}_E D') \quad \text{and} \quad
\Ext_E^{\DM}(D, D') = H(\underline{\Hom}^{\DM}_E(F, D')).
$$
One can also define $\Tor$ (resp $\Ext$) using a free (resp. injective) resolution of $D'$.


\bibliographystyle{amsalpha}
\bibliography{Bibliography}

\end{document}